\DeclareMathAlphabet{\mathsl}{OT1}{cmr}{m}{sl}
\numberwithin{equation}{section}
\newcommand{\grD}{\mathsf{D}}
\newcommand{\grI}{\mathsf{I}}
\newcommand{\grV}{\mathsf{V}}
\newcommand{\grW}{\mathsf{W}}
\newcommand{\grY}{\mathsf{Y}}
\newcommand{\grA}{\mathsf{A}}
\newcommand{\grK}{\mathsf{K}}
\newcommand{\id}{\operatorname{\mathsl{id}}}
\newcommand{\an}{\text{\textsl{an}}}
\renewcommand{\min}{\operatornamewithlimits{\mathsl{min}}}
\renewcommand{\deg}{\operatorname{\mathsl{deg}}}
\renewcommand{\dim}{\operatorname{\mathsl{dim}}}
\renewcommand{\ker}{\operatorname{\mathsl{ker}}}
\newcommand{\al}{\alpha}
\newcommand{\rest}[1]{\rvert_{#1}}
\newcommand{\grof}[2]{\gr_{#1}({#2})}
\newcommand{\wh}{\widehat}
\newcommand{\ov}{\overline}
\newcommand{\Vlam}{V_\lambda}
\newcommand{\grC}{\mathsf{C}}
\newcommand{\grT}{\mathsf{T}}
\renewcommand{\int}{\operatorname{\mathsl{int}}}
\renewcommand{\tilde}{\widetilde}
\DeclareMathOperator{\End}{\operatorname{\mathsl{End}}}
\DeclareMathOperator{\grEnd}{\operatorname{\mathsf{End}}}
\DeclareMathOperator{\grHom}{\operatorname{\mathsf{Hom}}}
\DeclareMathOperator{\gr}{\operatorname{\mathsf{gr}}}
\DeclareMathOperator{\Nrd}{\operatorname{\mathsl{Nrd}}}
\DeclareMathOperator{\im}{\operatorname{\mathsl{im}}}
\DeclareMathOperator{\Gal}{\operatorname{\mathcal{G}}}
\DeclareMathOperator{\charac}{\operatorname{\mathsl{char}}}
\DeclareMathOperator{\ad}{\operatorname{\mathsl{ad}}}
\DeclareMathOperator{\ms}{\operatorname{\mathsl{ms}}}
\DeclareMathOperator{\Sym}{\operatorname{\mathsl{Sym}}}
\DeclareMathOperator{\Symd}{\operatorname{\mathsl{Symd}}}
\DeclareMathOperator{\rk}{\operatorname{\mathsl{rk}}}
\DeclareMathOperator{\trdeg}{\operatorname{\mathsl{trdeg}}}
\DeclareMathOperator{\Tr}{\operatorname{\mathsl{Tr}}}
\def\hsp{\mspace{1mu}}
\newcommand{\DIM}[2]{[#1{\hsp:\hsp}#2]}
\newcommand{\IND}[2]{\lvert#1{\hsp:\hsp}#2\rvert}
\def\tsum{\textstyle\sum\limits}
\def\tbigoplus{\textstyle\bigoplus\limits}
\newcommand{\jdotfont}{}
\font\jdotfont lcircle10  scaled 913 
\newcommand{\osbullet}{\jdotfont\char113} 
\newlength{\sbwd} \settowidth{\sbwd}{\osbullet}
\newcommand{\csbullet}{\kern.5\sbwd\osbullet\kern-.5\sbwd}
\newcommand{\joinrelshort}{\mathrel{\mkern-9mu}}
\newcommand{\shortlongrightarrow}{\relbar\joinrelshort\rightarrow}
\newcommand{\iso}{\mathrel{\mathop{\setbox0\hbox{$\mathsurround0pt
           \shortlongrightarrow$}\ht0=0.7\ht0\box0}\limits
           ^{\sim\mkern2mu}}}
\newtheorem{proposition}{Proposition}[section]
\newtheorem{theorem}[proposition]{Theorem}
\newtheorem{corollary}[proposition]{Corollary}
\newtheorem{lemma}[proposition]{Lemma}
\theoremstyle{definition}
\newtheorem{definition}[proposition]{Definition}
\theoremstyle{remark}
\newtheorem*{remark}{Remark}
\newtheorem{remarks}[proposition]{Remarks}
\newtheorem{example}[proposition]{Example}
\title{Valuations on algebras with involution}
\author{J.-P. Tignol \and A.R. Wadsworth}
\address{Institut de Math\'ematique Pure et Appliqu\'ee\\
Universit\'e catholique de Louvain\\
B-1348 Louvain-la-Neuve\\
Belgium}
\email{jean-pierre.tignol@uclouvain.be}
\address{Department of Mathematics\\
University of California, San Diego\\
La Jolla, CA  92093-0112\\
USA}
\email{arwadsworth@ucsd.edu}
\thanks{The first author is partially supported by the F.R.S.--FNRS 
(Belgium).  The second author would like to thank the first author 
and UCL for their hospitality during several visits while this paper 
was developing}
\begin{document}
\maketitle

Valuations are a major tool for the study of the structure of division
algebras. The purpose of this work is to introduce a notion that plays
a similar role for central simple algebras with involution, and
to prove analogues for this notion to fundamental results on
valuations on division algebras.

Since the definition of a (Schilling) valuation implies an
absence of zero divisors, the only central simple algebras that can 
have valuations are division algebras.
Given a division algebra $D$ finite-dimensional over its center $F$, it
is natural to view valuations on a  $D$ as extensions of valuations on  $F$,
since valuations on fields are abundant and their theory is well-developed.
But not every valuation~$v$ on~$F$ extends to $D$.
In the extension question,
Henselian valuations play a special role.  Schilling proved in 
\cite[pp.~ 53--54]{Sch} that if $v$ on $F$ is Henselian, then $v$ has 
an extension to a valuation on $D$,
and this extension is unique.  Much later it was proved by
Ershov \cite{Er0} and Wadsworth \cite{W} that for any
valuation $v$ on $F$, $v$ extends to $D$ if and only if it satisfies a
Henselian-like condition with respect to the field extensions
of $F$ within $D$; they also proved that when $v$ extends to
$D$ the extension is unique.  Another fundamental criterion was
proved by Morandi \cite{M}:  $v$~on $F$ extends to a valuation on $D$ 
if and only if $D$~remains a division algebra after scalar extension 
to the Henselization $F_h$ of $F$ for $v$.
We will prove analogues for central simple algebras with involutions 
to these theorems of Schilling, Ershov-Wadsworth, and Morandi.

An involution on a central simple algebra $A$ is a 
ring-anti-automorphism $\sigma$ such that $\sigma^2=\id_A$. As Weil 
suggested in~\cite{We}, the theory of central simple algebras with
involution is a natural sibling to the theory of central
simple algebras, since the associated automorphism groups are the
basic types of classical groups.  In each setting there is
a notion of anisotropic object, corresponding to when the
associated automorphism group is anisotropic as an algebraic
group.  The anisotropic central simple algebras are the division 
algebras.  An involution $\sigma$ on a central simple algebra $A$ is 
anisotropic just when the equation
$\sigma(x) x = 0$ holds only for $x=0$.  In earlier work
\cite{TWgr} we have developed the theory of gauges, which are
a kind of value functions for central simple algebras. (The
definition  of a gauge is recalled at the end of this introduction.) 
For a central simple algebra
$A$ with involution $\sigma$, we define a
{\it $\sigma$-special gauge}
to be a gauge $\varphi$  on~$A$
satisfying the
condition\footnote{Notice the similarity with the
    definition of $C^*$-algebras, cf.~\cite[D\'ef.~1.3.1]{Dix}.}
that $\varphi(\sigma(x)x) = 2\varphi(x)$
for all $x\in A$.  A $\sigma$-special gauge for an algebra with involution
is our analogue to a valuation on a division algebra.  If $A$
has a $\sigma$-special gauge, then $\sigma$ is easily seen to be anisotropic.
If $v$~is a Henselian valuation on the $\sigma$-invariant part
$F$ of $Z(A)$ and $\sigma$
is anisotropic, we show in Th.~\ref{mainHensel.thm} that there is a 
unique $\sigma$-invariant gauge $\varphi$ on $A$ extending
$v$, and $\varphi$ is a $\sigma$-special gauge.  When $v$ on $F$
is not Henselian, we show in Th.~\ref{mainnonHensel.thm} that
there is a $\sigma$-special gauge $\varphi$ on $A$ extending
$v$ if and only if the anisotropic
involution $\sigma$ remains anisotropic after scalar extension to the 
Henselization of $F$ with respect to $v$; furthermore,
there is  only one such $\varphi$.   Our results require
tame ramification and exclude orthogonal involutions if the residue 
characteristic is $2$; see the statements of
Theorems~\ref{mainHensel.thm} and \ref{mainnonHensel.thm}
for the precise conditions required.

A gauge $\varphi$ on a central simple algebra $A$ induces a 
filtration on $A$ which yields an associated graded ring
$\gr(A)$, analogous to what one has with a valuation on a field or a 
division ring.  The graded structure is intrinsic to the definition 
of a gauge, and is used heavily throughout this paper.  The 
degree~$0$ part of $\gr(A)$, denoted $A_0$, is the
residue ring of the \lq\lq valuation ring" of $A$ determined by
the gauge $\varphi$;  $A_0$~is~always a semisimple
$Z(A)_0$-algebra,
but not simple  in general.  If $\sigma$ is an involution on $A$ and 
$\varphi$~is invariant under $\sigma$, then $\sigma$ induces 
involutions $\tilde \sigma$ on $\gr(A)$
and $\sigma_0$ on $A_0$.  We show in Prop.~\ref{eqcond.prop} and
Remark~\ref{anisot.rem}(1) that a $\sigma$-invariant
gauge $\varphi$ is  $\sigma$-special  if and only if
$\tilde \sigma$ is anisotropic, if and only if $\sigma_0$ is anisotropic.
We also prove an analogue of a theorem
of Springer: when the base field is Henselian, an involution
$\sigma$ is
isotropic if and only if its residue involution $\sigma_0$
is isotropic
(Cor.~\ref{anisot.cor}). This criterion is applied to show that under
specified valuation-theoretic conditions, an anisotropic involution
remains anisotropic after certain scalar extensions (Cor.~\ref{cor:isotcrit}).

An outline of this paper is as follows: In Section~\ref{sec:compa},
we discuss in general terms the compatibility of a value function with
an involution, relating that notion to a compatibility condition
between norms and hermitian forms defined in \cite{RTW}. In
Section~\ref{sec:Hensel}, we restrict to the case of Henselian
valuations and give the proofs of Th.~\ref{mainHensel.thm} and
Cor.~\ref{anisot.cor}. Some applications to scalar extensions (in
particular Cor.~\ref{cor:isotcrit}) are given in
Section~\ref{tens.sec}. Sections~\ref{comp.sec} and \ref{descent.sec}
prepare the ground for the extension of our results to the
non-Henselian case in Section~\ref{nonHensel.sec}. The main problem is
to analyze how the condition for the existence of a splitting base of
a value function (which is a critical part of the definition of a
gauge) behaves under restriction of scalars; this is done in
Section~\ref{descent.sec}. In Section~\ref{comp.sec}, we investigate
this condition for the composition of value functions. This is used in
Section~\ref{nonHensel.sec} in the proof of
Th.~\ref{mainnonHensel.thm} by induction on the rank of valuations.
\medbreak
\par
For the convenience of the reader, we now review the basic notions of 
value functions, norms, and gauges
introduced in \cite{RTW} and \cite{TWgr}. Throughout the paper, we fix
a divisible totally ordered  abelian group $\Gamma$, which will contain
the values of all the valuations and the degrees of all the gradings
we consider. Thus, a valued field $(F,v)$ is a pair consisting of a
field $F$ and a valuation $v\colon F\to\Gamma\cup\{\infty\}$. The
group $v(F^\times)$ of values of $F$ is denoted by~$\Gamma_F$, and the
residue field by $\overline{F}$. We use  analogous notation for
valuations on division rings.

Let $(F,v)$ be a valued field. A \emph{$v$-value function} on an
$F$-vector space $V$ is a map $\alpha\colon V\to\Gamma\cup\{\infty\}$
such that
\begin{enumerate}
\item[(i)]
$\alpha(x)=\infty$ if and only if $x=0$;
\item[(ii)]
$\alpha(x+y)\geq\min\bigl(\alpha(x),\alpha(y)\bigr)$ for $x$, $y\in V$;
\item[(iii)]
$\alpha(xc)=\alpha(x)+v(c)$ for  all $x\in V$ and $c\in
F$.
\end{enumerate}
The $v$-value function $\alpha$ is called a \emph{norm} if $V$
is finite-dimensional and
contains a base $(e_i)_{i=1}^n$ such that
\[
\alpha\bigl(\tsum_{i=1}^ne_ic_i\bigr) \ = \
\min\limits_{1\leq i\leq n}\bigl(\alpha(e_ic_i)\bigr)
\qquad\text{for $c_1$, \ldots, $c_n\in F$}.
\]
Such a base is called a \emph{splitting base} of $V$ for $\alpha$. A
$v$-value function $\varphi$ on an $F$-algebra $A$ is
\emph{surmultiplicative} if $\varphi(1)=0$ and
$\varphi(xy)\geq\varphi(x)+\varphi(y)$ for $x$, $y\in A$.

The valuation $v$ defines a filtration on $F$: for $\gamma\in\Gamma$
we set
\[
F^{\geq\gamma}\, = \,\{x\in F\mid v(x)\geq\gamma\},\quad
F^{>\gamma} \, = \, \{x\in F\mid v(x)>\gamma\},\quad
\text{and }
F_\gamma \, = \, F^{\geq\gamma}/F^{>\gamma}.
\]
The associated graded ring is
\[
\gr(F) \ = \ \tbigoplus_{\gamma\in\Gamma}F_\gamma.
\]
It is called a \emph{graded field} because every nonzero homogeneous
element in $\gr(F)$ is invertible. Likewise, every $v$-value function
$\alpha$ on an $F$-vector space $V$ defines a filtration, and the
associated graded structure $\gr_\alpha(V)$ is a graded module over
$\gr(F)$, which we call a \emph{graded vector space}. It is a free
module, whose rank is called its dimension. The value function is a norm
if and only if $\dim_{\gr(F)}(\gr_\alpha(V))=\dim_F(V)<\infty$, see
\cite[Cor.~2.3]{RTW}. Every nonzero element $x\in V$ has an image
$\tilde x$ in $\gr_\alpha(V)$ defined by
\[
\tilde x \, = \,  x+V^{>\alpha(x)} \, \in V_{\alpha(x)}.
\]
We also set $\tilde 0=0\in\gr_\alpha(V)$. If $\varphi$ is a
surmultiplicative $v$-value function on an $F$-algebra $A$, then
$\gr_\varphi(A)$ is an algebra over $\gr(F)$, in which multiplication
is defined by
\[
\tilde a\tilde b \, \,= \,\, ab+V^{>\varphi(a)+\varphi(b)} \ = \
\begin{cases}
\tilde{ab}&\text{if $\varphi(ab)=\varphi(a)+\varphi(b)$},\\
0&\text{if $\varphi(ab)>\varphi(a)+\varphi(b)$},
\end{cases}
\quad\text{for $a$, $b\in A$}.
\]

Now, suppose $A$ is a finite-dimensional simple $F$-algebra. We denote
by $\DIM{A}{F}$ its dimension and by $Z(A)$ its center. A
surmultiplicative $v$-value function $\varphi$ on $A$ is called a
\emph{$v$-gauge} if it satisfies the following conditions:
\begin{enumerate}
\item[(i)]
$\varphi$ is a $v$-norm, i.e.,
$\DIM{A}{F}=\DIM{\gr_\varphi(A)}{\gr(F)}$;
\item[(ii)]
$\gr_\varphi(A)$ is a graded semisimple $\gr(F)$-algebra, i.e., it does
not contain any nonzero nilpotent homogeneous two-sided ideal.
\end{enumerate}
The $v$-gauge $\varphi$ is said to be  \emph{tame} if
$Z\bigl(\gr_\varphi(A)\bigr)=\gr_\varphi\bigl(Z(A)\bigr)$ and
$Z\bigl(\gr_\varphi(A)\bigr)$ is separable over $\gr(F)$.
  If the residue characteristic is $0$,
then every $v$-gauge is tame, see~\cite[Cor.~3.6]{TWgr}.

\section{Special gauges}
\label{sec:compa}

Let $(F,v)$ be a valued field and let $A$ be an
$F$-algebra. An $F$-linear involution on $A$ is an $F$-linear map
$\sigma\colon A\to A$ such that
\begin{enumerate}
\item[(i)]
$\sigma(x+y)=\sigma(x)+\sigma(y)$ for $x$, $y\in A$;
\item[(ii)]
$\sigma(xy)=\sigma(y)\sigma(x)$ for $x$, $y\in A$;
\item[(iii)]
$\sigma^2(x)=x$ for $x\in A$.
\end{enumerate}
(The $F$-linearity implies that $\sigma\rvert_F=\id_F$.)
A surmultiplicative $v$-value function $\varphi\colon
A\to\Gamma\cup\{\infty\}$ is
said to be  \emph{invariant under $\sigma$} if
\begin{equation}\label{compdef}
\varphi\bigl(\sigma(x)\bigr)\, =\, \varphi(x)\qquad\text{for all $x\in A$}.
\end{equation}
The involution then preserves the filtration on $A$ defined by
$\varphi$. Therefore, it induces an involution $\tilde\sigma$ on
$\gr_\varphi(A)$ such that
\[
\tilde\sigma(\tilde x) \, = \, \tilde{\sigma(x)}\qquad\text{for all $x\in A$}.
\]

As in \cite[\S6.A]{BoI}, we say that the involution $\sigma$ is
\emph{anisotropic} if there is no nonzero element $x\in A$ such that
$\sigma(x)x=0$. Likewise, $\tilde\sigma$ is said to be anisotropic if
there is no nonzero homogeneous element $\xi\in\gr_\varphi(A)$ such
that $\tilde\sigma(\xi)\xi=0$.  Clearly, if $\tilde\sigma$ is
anisotropic, then $\sigma $ is anisotropic.

\begin{proposition}
       \label{eqcond.prop}
       Let $\varphi$ be a surmultiplicative $v$-value function and
       $\sigma$ an $F$-linear involution on $A$.
        The following conditions are
       equivalent:
       \begin{enumerate}
	\item[(a)]
	$\varphi(\sigma(x)x)=2\varphi(x)$ for all $x\in A$;
	\item[(b)]
	$\varphi$ is invariant under $\sigma$, and $\tilde\sigma$ is
           anisotropic.
        \end{enumerate}
        They imply that if $x$, $y\in A$ satisfy $\sigma(x)y=0$ or
        $x\sigma(y)=0$, then
        \begin{equation}
	 \label{sumorth.eq}
	 \varphi(x+y) \, = \, \min\bigl(\varphi(x),\varphi(y)\bigr).
        \end{equation}
        Moreover, when these equivalent conditions hold,
$\sigma$ is anisotropic and the
        $\gr(F)$-algebra $\gr_\varphi(A)$  contains no nonzero
        homogeneous nil left or right ideal.
\end{proposition}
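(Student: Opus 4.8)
The plan is to translate everything into the graded ring $\gr_\varphi(A)$, where the multiplication formula records precisely when $\varphi(ab)=\varphi(a)+\varphi(b)$, and to use the identity $\tilde\sigma(\tilde x)=\widetilde{\sigma(x)}$, valid once $\varphi$ is known to be invariant under $\sigma$.

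For \textbf{(b)~$\Rightarrow$~(a)}, surmultiplicativity together with invariance gives $\varphi(\sigma(x)x)\geq\varphi(\sigma(x))+\varphi(x)=2\varphi(x)$, and if this were strict for some $x\neq0$, the multiplication formula would yield $\tilde\sigma(\tilde x)\,\tilde x=\widetilde{\sigma(x)}\,\tilde x=0$ with $\tilde x\neq0$ homogeneous, contradicting anisotropy of $\tilde\sigma$; the case $x=0$ is trivial. For \textbf{(a)~$\Rightarrow$~(b)}, I would first get invariance: applying (a) to $x$ gives $2\varphi(x)=\varphi(\sigma(x)x)\geq\varphi(\sigma(x))+\varphi(x)$, hence $\varphi(x)\geq\varphi(\sigma(x))$, and applying (a) to $\sigma(x)$ gives $2\varphi(\sigma(x))=\varphi(x\sigma(x))\geq\varphi(x)+\varphi(\sigma(x))$, hence the reverse inequality. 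Then $\tilde\sigma$ is defined, and a nonzero homogeneous $\xi=\tilde x$ with $\tilde\sigma(\xi)\xi=0$ would mean $\varphi(\sigma(x)x)>\varphi(\sigma(x))+\varphi(x)=2\varphi(x)$, contradicting (a); so $\tilde\sigma$ is anisotropic.

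Next I would prove \eqref{sumorth.eq} under these conditions. When $\varphi(x)\neq\varphi(y)$ it holds by the usual ultrametric consequence of axioms~(ii) and~(iii), so assume $\varphi(x)=\varphi(y)=\gamma\in\Gamma$ (in particular $x,y\neq0$) and, for contradiction, $\varphi(x+y)>\gamma$, which means $\tilde x+\tilde y=0$ in the degree-$\gamma$ component of $\gr_\varphi(A)$, i.e.\ $\tilde y=-\tilde x$. If $\sigma(x)y=0$, then $\varphi(\sigma(x)y)=\infty>\varphi(\sigma(x))+\varphi(y)$, so $\widetilde{\sigma(x)}\,\tilde y=0$; substituting $\widetilde{\sigma(x)}=\tilde\sigma(\tilde x)$ and $\tilde y=-\tilde x$ gives $\tilde\sigma(\tilde x)\,\tilde x=0$ with $\tilde x\neq0$ homogeneous, against anisotropy of $\tilde\sigma$. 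The case $x\sigma(y)=0$ reduces to this one: applying $\sigma$ gives $y\sigma(x)=\sigma\bigl(x\sigma(y)\bigr)=0$, i.e.\ $\sigma\bigl(\sigma(y)\bigr)\sigma(x)=0$, so by the case just treated $\varphi\bigl(\sigma(y)+\sigma(x)\bigr)=\min\bigl(\varphi(\sigma(y)),\varphi(\sigma(x))\bigr)$, and invariance together with $\sigma(y)+\sigma(x)=\sigma(x+y)$ transports this to $\varphi(x+y)=\min(\varphi(x),\varphi(y))$.

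Finally, $\sigma$ is anisotropic because $\tilde\sigma$ is (if $\sigma(x)x=0$ with $x\neq0$ then $\varphi(\sigma(x)x)=\infty>2\varphi(x)$ forces $\tilde\sigma(\tilde x)\tilde x=0$). For the nil-ideal statement, note that $\tilde\sigma$ is a degree-preserving anti-automorphism, so it interchanges homogeneous nil left ideals with homogeneous nil right ideals; hence it suffices to show $\gr_\varphi(A)$ has no nonzero homogeneous nil left ideal $I$. Choosing a nonzero homogeneous $\xi\in I$, the element $\eta:=\tilde\sigma(\xi)\xi$ lies in $I$, is homogeneous, is nonzero by anisotropy of $\tilde\sigma$, and satisfies $\tilde\sigma(\eta)=\eta$. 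The crux is then an induction showing such an $\eta$ cannot be nilpotent: since powers of a $\tilde\sigma$-symmetric element are again $\tilde\sigma$-symmetric, $\tilde\sigma\bigl(\eta^{2^k}\bigr)=\eta^{2^k}$, and if $\eta^{2^k}\neq0$ then $\eta^{2^{k+1}}=\tilde\sigma\bigl(\eta^{2^k}\bigr)\eta^{2^k}\neq0$ by anisotropy; hence no power of $\eta$ vanishes, contradicting $I$ being nil. I expect this last induction, and keeping the case analysis in \eqref{sumorth.eq} straight, to be the only delicate points; the rest is a mechanical passage between $\varphi$ on $A$ and products in $\gr_\varphi(A)$.
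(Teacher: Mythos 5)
Your proof is correct, and for the equivalence (a)$\iff$(b) and for the nil-ideal statement it runs essentially along the paper's lines (the paper treats a nil right ideal directly via $\eta=\xi\tilde\sigma(\xi)$ and kills $\eta^k$ with $\eta^{k+1}=0$, where you instead pass to left ideals via $\tilde\sigma$ and square along powers $\eta^{2^k}$ — a cosmetic difference). The one place where your route genuinely differs is \eqref{sumorth.eq}: the paper never splits into cases, but evaluates $\varphi\bigl(\sigma(x)\cdot(x+y)\bigr)$ in two ways — it is $\geq\varphi(x)+\varphi(x+y)$ by surmultiplicativity and invariance, and equals $\varphi(\sigma(x)x)=2\varphi(x)$ because $\sigma(x)y=0$ — which gives $\varphi(x)\geq\varphi(x+y)$ and, by symmetry, $\varphi(y)\geq\varphi(x+y)$ directly. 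You instead dispose of the case $\varphi(x)\neq\varphi(y)$ by the ultrametric equality and then, when $\varphi(x)=\varphi(y)=\gamma$, argue by contradiction inside $\gr_\varphi(A)$: if $\varphi(x+y)>\gamma$ then $\tilde y=-\tilde x$, while $\sigma(x)y=0$ forces $\tilde\sigma(\tilde x)\tilde y=0$, so $\tilde\sigma(\tilde x)\tilde x=0$, violating anisotropy of $\tilde\sigma$. Both arguments are short and correct; the paper's computation avoids the case analysis and uses only condition (a), whereas yours makes the role of anisotropy of $\tilde\sigma$ completely transparent and stays entirely at the level of the graded ring. Your reduction of the case $x\sigma(y)=0$ by applying $\sigma$ and using invariance is the same device the paper uses.
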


\begin{proof}
       (a)~$\Rightarrow$~(b):
If $\sigma(x)x= 0$, then condition (a) implies that $\varphi(x) = 
\infty$, so $x=0$.  Thus, $\sigma$ is anisotropic.
       By surmultiplicativity, we have
       \[
       \varphi(\sigma(x)x) \, \geq  \,
       \varphi(\sigma(x))+\varphi(x)\qquad\text{for all $x\in A$}.
       \]
       Therefore, (a) implies $\varphi(x)\geq \varphi(\sigma(x))$ for all
       $x\in A$.
       Substituting $\sigma(x)$ for $x$ in this inequality, we obtain
       $\varphi(\sigma(x))\geq \varphi(x)$ for all $x\in A$. Therefore,
       $\varphi$ is invariant under $\sigma$, and condition (a) can be
       reformulated as
       $\varphi(\sigma(x)x)=\varphi\bigl(\sigma(x)\bigr)+\varphi(x)$ for
       all $x\in A$. Thus, it implies
       \[
       \tilde{\sigma(x)}\tilde x \, = \,
(\sigma(x)x)^\sim\qquad\text{for all $x\in A$},
       \]
       whence $\tilde\sigma$ is anisotropic, as $\sigma$ is
anisotropic.

       (b)~$\Rightarrow$~(a):
For all $x\in A$
       we have
       \[
       \tilde\sigma(\tilde x)\tilde x \ = \
       \begin{cases}
	(\sigma(x)x)^\sim&\text{if
             $\varphi(\sigma(x)x)=\varphi\bigl(\sigma(x)\bigr)+\varphi(x)$,}\\
	0&\text{if
             $\varphi(\sigma(x)x)>\varphi\bigl(\sigma(x)\bigr)+\varphi(x)$}.
       \end{cases}
       \]
Condition (b) implies that the
first case always occurs.  Hence, for all $x$, $\ {\varphi(\sigma(x)x) =
\varphi(\sigma(x)) + \varphi(x) = 2\varphi(x)}$.

       For the rest of the proof, assume (a) and (b) hold.
Then clearly $\sigma$ is anisotropic. Also,
for $x$,
       $y\in A$ we have by surmultiplicativity
       \begin{equation}
	\label{jsig.eq}
	\varphi\bigl(\sigma(x)\cdot(x+y)\bigr) \, \geq \,
	\varphi\bigl(\sigma(x)\bigr)+\varphi(x+y) \, = \,
           \varphi(x)+\varphi(x+y).
       \end{equation}
       If $\sigma(x)y=0$, then
       \begin{equation}
	\label{jsig2.eq}
	\varphi\bigl(\sigma(x)\cdot(x+y)\bigr) \, = \, \varphi(\sigma(x)x)
    \, = \, 2\varphi(x).
       \end{equation}
       By combining \eqref{jsig.eq} and \eqref{jsig2.eq}, we obtain
       $\varphi(x)\geq \varphi(x+y)$. Similarly, interchanging $x$ and 
$y$ we get
       $\varphi(y)\geq \varphi(x+y)$, hence
       \[
       \min\bigl(\varphi(x),\varphi(y)\bigr) \, \geq \,  \varphi(x+y).
       \]
       The reverse inequality holds by definition of a value function,
       hence \eqref{sumorth.eq} is proved when $\sigma(x)y=0$. If
       $x\sigma(y)=0$, we substitute $\sigma(x)$ for $x$ and $\sigma(y)$
       for $y$ in the arguments above, obtaining
       \[
       \varphi\bigl(\sigma(x)+\sigma(y)\bigr) \, = \,
       \min\bigl(\varphi(\sigma(x)),
       \varphi(\sigma(y))\bigr).
       \]
       Equation~\eqref{sumorth.eq} follows since $\varphi\circ\sigma=\varphi$.

       To complete the proof, suppose $\grI\subset\gr_\varphi(A)$ is a
       homogeneous nil left (resp.\ right) ideal and $\xi\in\grI$ is a
       nonzero homogeneous element. Let $\eta=\tilde\sigma(\xi)\xi$
       (resp.\ $\eta=\xi\tilde\sigma(\xi)$). Then $\eta\in\grI$ is
       $\tilde\sigma$-symmetric, homogeneous, and nonzero since
       $\tilde\sigma$ is anisotropic. Since $\grI$ is nil, we may find
       $k\geq1$ such that $\eta^k\neq0$ and $\eta^{k+1}=0$. For
       $\zeta=\eta^k$ we have
       \[
       \tilde\sigma(\zeta)\zeta \, = \, \zeta^2 \, = \, \eta^{2k} \, = \, 0,
       \]
       so $\zeta=0$, a contradiction.
\end{proof}

\begin{definition}\label{specialdef}
A surmultiplicative $v$-value function $\varphi$ on a central simple 
algebra $A$ with involution~$\sigma$ is called {\it
$\sigma$-special} if it satisfies the
conditions (a) and (b) of Prop.~\ref{eqcond.prop}.
\end{definition}

For use in \S\S\ref{tens.sec} and \ref{nonHensel.sec}, we record how
involution invariance of value functions
 behaves with respect to tensor products.
Recall from \cite[Prop.~1.23, (1.25)]{TWgr} that if $V$ is a finite-dimensional
$F$-vector space with a $v$-norm $\alpha$ and $W$ is an $F$-vector space with
$v$-value function $\beta$, then there is a $v$-value function
$\alpha \otimes \beta$ on $V\otimes _F W$ uniquely determined by the
condition that the map $(x\otimes y)^\sim\mapsto\widetilde{x}\otimes
\widetilde{y}$ (for $x\in V$ and $y\in W$) defines an isomorphism of
graded vector spaces
\begin{equation}
\label{eq:griso}
\Omega\colon \gr_{\alpha\otimes\beta}(V\otimes_F W)\iso
\gr_\alpha(V)\otimes_{\gr(F)}\gr_\beta(W).
\end{equation}
In particular,
\begin{equation}\label{alphatensorbeta}
(\alpha\otimes\beta)(x\otimes y)=\alpha(x)+\beta(y)
\quad \text{for all $x\in V$ and $y\in W$}.
\end{equation}
The value function $\alpha\otimes\beta$ can be defined as follows:
take any splitting base $(e_i)_{i=1}^n$ for $\alpha$ on $V$; then,
\[
(\alpha \otimes \beta)\bigl( \, \tsum_{i=1}^n e_i\otimes y_i \, \bigr)
    \ = \ \min\limits_{1\le i\le n}\bigl(\alpha(e_i) + \beta(y_i)\bigr)
\quad \text{for
any $y_1,\ldots,y_n \in W$}.
\]
Furthermore, analogous to \cite[Cor.~1.26]{TWgr},  if $(W, \beta)$ is
a valued field extending $(F,v)$, then
$\alpha\otimes \beta$ is a $\beta$-norm on $V\otimes_F W$ and
\eqref{eq:griso} is a $\gr_\beta(W)$-vector space isomorphism.

\begin{proposition}
     \label{prop:tensprod}
     Let $\sigma$ and $\tau$ be $F$-linear involutions on $F$-algebras
     $A$ and $B$ respectively, and let $\varphi$ $($resp.~{$\psi$}$)$ be a
     surmultiplicative $v$-value function on $A$
$($resp.~{$B$}$)$ invariant
     under $\sigma$ $($resp.~{$\tau$}$)$. Suppose $A$~is finite-dimensional and
     $\varphi$ is a $v$-norm. Then, $\varphi\otimes\psi$ is a
     surmultiplicative $v$-value function on $A\otimes_FB$ invariant
     under the involution $\sigma\otimes\tau$, and the canonical
     isomorphism $\Omega$ of \eqref{eq:griso} is an isomorphism of graded
     $\gr(F)$-algebras with involution,
     \[
     \bigl(\gr_{\varphi\otimes\psi}(A\otimes_FB),
     \widetilde{\sigma\otimes\tau}\bigr) \iso
     \bigl(\gr_\varphi(A)\otimes_{\gr(F)}\gr_\psi(B),
     \widetilde{\sigma}\otimes\widetilde{\tau}\bigr).
     \]
\end{proposition}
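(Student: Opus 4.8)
The plan is to reduce every assertion to the explicit description of $\varphi\otimes\psi$ relative to a splitting base. By the facts recalled just above, $\varphi\otimes\psi$ is already known to be a $v$-value function on $A\otimes_FB$ satisfying \eqref{alphatensorbeta}, the canonical map $\Omega$ of \eqref{eq:griso} is known to be an isomorphism of graded $\gr(F)$-vector spaces with $\Omega\bigl((a\otimes b)^\sim\bigr)=\widetilde a\otimes\widetilde b$, and $\sigma\otimes\tau$ is immediately seen from the axioms to be an $F$-linear involution on $A\otimes_FB$. What remains is to prove: (i)~$\varphi\otimes\psi$ is surmultiplicative; (ii)~$\varphi\otimes\psi$ is invariant under $\sigma\otimes\tau$; (iii)~$\Omega$ is multiplicative; and (iv)~$\Omega$ carries $\widetilde{\sigma\otimes\tau}$ to $\widetilde\sigma\otimes\widetilde\tau$. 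In each step I would fix once and for all a splitting base $(e_i)_{i=1}^n$ of $A$ for $\varphi$, so that every $u\in A\otimes_FB$ is uniquely $u=\tsum_ie_i\otimes b_i$ with $b_i\in B$, and $(\varphi\otimes\psi)(u)=\min_i\bigl(\varphi(e_i)+\psi(b_i)\bigr)$.

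For (i), $(\varphi\otimes\psi)(1\otimes1)=\varphi(1)+\psi(1)=0$; and given $u=\tsum_ie_i\otimes b_i$ and $u'=\tsum_je_j\otimes b'_j$, I would expand $e_ie_j=\tsum_ke_kc_{ijk}$ with $c_{ijk}\in F$ and collect terms as $uu'=\tsum_ke_k\otimes\bigl(\tsum_{i,j}c_{ijk}b_ib'_j\bigr)$. The value-function axioms for $\psi$ and its surmultiplicativity bound $(\varphi\otimes\psi)(uu')$ from below by $\min_{i,j,k}\bigl(\varphi(e_k)+v(c_{ijk})+\psi(b_i)+\psi(b'_j)\bigr)$; taking the minimum over $k$ first replaces $\varphi(e_k)+v(c_{ijk})$ by $\min_k\bigl(\varphi(e_k)+v(c_{ijk})\bigr)=\varphi(e_ie_j)$ (splitting-base property), which is $\geq\varphi(e_i)+\varphi(e_j)$ by surmultiplicativity of $\varphi$, so the lower bound becomes exactly $(\varphi\otimes\psi)(u)+(\varphi\otimes\psi)(u')$. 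This is the most computational step, but it is routine.

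For (ii), since $\sigma$ is $F$-linear with $\sigma\rvert_F=\id_F$ and $\varphi\circ\sigma=\varphi$, the family $\bigl(\sigma(e_i)\bigr)_{i=1}^n$ is again a splitting base of $A$ for $\varphi$; as the formula for $\varphi\otimes\psi$ does not depend on the chosen splitting base, for $u=\tsum_ie_i\otimes b_i$ one has $(\sigma\otimes\tau)(u)=\tsum_i\sigma(e_i)\otimes\tau(b_i)$, whence, using $\psi\circ\tau=\psi$,
\[
(\varphi\otimes\psi)\bigl((\sigma\otimes\tau)(u)\bigr)=\min_i\bigl(\varphi(\sigma(e_i))+\psi(\tau(b_i))\bigr)=\min_i\bigl(\varphi(e_i)+\psi(b_i)\bigr)=(\varphi\otimes\psi)(u).
\]
Thus $\varphi\otimes\psi$ is invariant under $\sigma\otimes\tau$, and $\widetilde{\sigma\otimes\tau}$ is a well-defined involution on $\gr_{\varphi\otimes\psi}(A\otimes_FB)$. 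For (iii) and (iv): since $\Omega$ and the induced involutions are $\gr(F)$-linear, the graded multiplications $\gr(F)$-bilinear, and the homogeneous elements $(a\otimes b)^\sim$ ($a\in A$, $b\in B$) generate $\gr_{\varphi\otimes\psi}(A\otimes_FB)$ over $\gr(F)$ --- their $\Omega$-images $\widetilde a\otimes\widetilde b$ generate the target --- both identities need only be verified on such elements. For (iii), comparing $\Omega(\xi\eta)$ with $\Omega(\xi)\Omega(\eta)=(\widetilde a\,\widetilde{a'})\otimes(\widetilde b\,\widetilde{b'})$ for $\xi=(a\otimes b)^\sim$ and $\eta=(a'\otimes b')^\sim$ reduces, by the definitions of the graded products and \eqref{alphatensorbeta}, to the observation that both sides equal $\widetilde{aa'}\otimes\widetilde{bb'}$ when $\varphi(aa')=\varphi(a)+\varphi(a')$ and $\psi(bb')=\psi(b)+\psi(b')$ both hold, and both vanish otherwise. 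For (iv), $\Omega\bigl(\widetilde{\sigma\otimes\tau}((a\otimes b)^\sim)\bigr)=\widetilde{\sigma(a)}\otimes\widetilde{\tau(b)}=\widetilde\sigma(\widetilde a)\otimes\widetilde\tau(\widetilde b)=(\widetilde\sigma\otimes\widetilde\tau)\bigl(\Omega((a\otimes b)^\sim)\bigr)$, using the defining relation $\widetilde\sigma(\widetilde x)=\widetilde{\sigma(x)}$ and its analogues for $\widetilde\tau$ and $\widetilde{\sigma\otimes\tau}$. Since $\Omega$ is bijective and unital, it is thereby an isomorphism of graded $\gr(F)$-algebras with involution.

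I do not anticipate a genuine obstacle. Two points need care: $\Omega$ is only characterized on the elements $(a\otimes b)^\sim$, so every claim about it must be reduced to those and extended by $\gr(F)$-linearity (and bilinearity/additivity); and step (ii) genuinely uses the independence of the defining formula for $\varphi\otimes\psi$ from the chosen splitting base. The surmultiplicativity verification in (i) is simply the lengthiest bookkeeping.
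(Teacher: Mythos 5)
Your proposal is correct, and for the bulk of the statement it runs parallel to the paper: surmultiplicativity and invariance are both proved there via a splitting base $(e_i)$ of $A$, with the same two key observations you isolate, namely that $(\sigma(e_i))$ is again a splitting base and that the defining formula for $\varphi\otimes\psi$ is independent of the splitting base chosen; your surmultiplicativity computation only differs cosmetically, in that the paper bounds $(\varphi\otimes\psi)\bigl(\sum_{i,j}e_ie_j\otimes x_iy_j\bigr)$ directly by subadditivity and \eqref{alphatensorbeta} applied to the pure tensors $e_ie_j\otimes x_iy_j$, whereas you re-expand $e_ie_j$ in the base and recover $\varphi(e_ie_j)$ from the splitting-base property --- both are valid. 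The one genuinely different step is the multiplicativity of $\Omega$: you verify $\Omega(\xi\eta)=\Omega(\xi)\Omega(\eta)$ directly on the additive generators $(a\otimes b)^\sim$ by the case analysis $\varphi(aa')=\varphi(a)+\varphi(a')$ and $\psi(bb')=\psi(b)+\psi(b')$ versus a strict inequality (correct, since by \eqref{alphatensorbeta} and surmultiplicativity both sides are $\widetilde{aa'}\otimes\widetilde{bb'}$ in the first case and $0$ otherwise), while the paper instead builds $\Omega^{-1}$ structurally: the value-preserving embeddings $a\mapsto a\otimes 1$, $b\mapsto 1\otimes b$ induce graded algebra homomorphisms with commuting images, so the universal property of the algebra tensor product yields a $\gr(F)$-algebra homomorphism $\tilde a\otimes\tilde b\mapsto(a\otimes b)^\sim$, which is identified with $\Omega^{-1}$. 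Your route is more hands-on and makes the zero-versus-nonzero bookkeeping explicit; the paper's avoids any case analysis at the cost of invoking the universal property. Do note that your reduction to generators tacitly uses that every homogeneous element of $\gr_{\varphi\otimes\psi}(A\otimes_FB)$ is a sum of elements $(e_i\otimes b_i)^\sim$ (immediate from the splitting-base formula), and that the one-line computation showing $(\sigma(e_i))$ is a splitting base, which you only cite as a consequence of $\varphi\circ\sigma=\varphi$, should be written out as in the paper; neither is a gap, just a detail to include.
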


\begin{proof}
     Let $(e_i)_{i=1}^n$ be a splitting base of $A$ for $\varphi$. For
     $x$, $y\in A\otimes_FB$ we may write
     \[
     x \, = \, \tsum_{i=1}^n e_i\otimes x_i\qquad\text{and}\qquad
     y \, = \, \tsum_{j=1}^n e_j\otimes y_j\qquad\text{for some $x_1$, \ldots,
       $y_n\in B$}.
     \]
     Then,
     \[
       (\varphi\otimes\psi)(xy)  \ = \
       (\varphi\otimes\psi)\bigl(\tsum_{i,j}e_ie_j\otimes x_ix_j\bigr)
        \ \geq \
       \min\limits_{1\leq i,j\leq n}\bigl(
       (\varphi\otimes\psi)(e_ie_j\otimes x_iy_j)\bigr)
        \ = \  \min\limits_{i,j}\bigl(\varphi(e_ie_j) +\psi(x_iy_j)\bigr).
     \]
     Since $\varphi$ and $\psi$ are surmultiplicative, we have
     $\varphi(e_ie_j)\geq\varphi(e_i)+\varphi(e_j)$ and
     $\psi(x_iy_j)\geq\psi(x_i)+\psi(y_j)$, hence
     \[
     (\varphi\otimes\psi)(xy) \ \geq \ \min\limits_{i,j}\bigl(
     \varphi(e_i)+\varphi(e_j) +\psi(x_i)+\psi(y_j)\bigr)  \ = \
     \min\limits_i\bigl(\varphi(e_i)+\psi(x_i)\bigr) +
     \min\limits_j\bigl(\varphi(e_j)+\psi(y_j)\bigr).
     \]
     The right side is $(\varphi\otimes\psi)(x)+(\varphi\otimes\psi)(y)$,
     so
     \[
     (\varphi\otimes\psi)(xy) \ \geq \ (\varphi\otimes\psi)(x)+
     (\varphi\otimes\psi)(y).
     \]
     Since moreover
     $(\varphi\otimes\psi)(1\otimes1)=\varphi(1)+\psi(1)=0$,
     surmultiplicativity of $\varphi\otimes\psi$ is proved.

To show that the $gr_F$-vector space isomorphism $\Omega$
is a ring isomorphism, we check this for $\Omega^{-1}$.
The canonical $F$-algebra homomorphisms $\iota_A\colon
A \to A\otimes_FB$, $a \mapsto a\otimes 1$ and
$\iota_B\colon B \to A\otimes_F B$, $b \mapsto 1\otimes b$,
are value-preserving. Hence, they induce $\gr(F)$-algebra
homomorphisms $\tilde\iota_A\colon
\gr_\al(A) \to \gr_{\al\otimes \beta}(A\otimes_FB)$,
$\tilde a \mapsto \tilde{a\otimes 1}$, and
$\tilde\iota_B\colon \gr_\beta(B) \to
\gr_{\al \otimes \beta}(A\otimes_F B)$,
$\tilde b \mapsto \tilde{1\otimes b}$.  For any
$a\in A$ and $b\in B$, we have from \eqref{alphatensorbeta},
$$
\tilde{a\otimes 1}\cdot \tilde{1\otimes b} \ = \
[(a\otimes 1)\cdot(1\otimes b)]^\sim
\ = \
\tilde{a\otimes b} \ = \
[(1\otimes b) \cdot (a\otimes 1)]^\sim
   \ = \ \tilde{1\otimes b} \cdot \tilde{a\otimes 1}.
$$
Thus, $\im(\tilde\iota_B)$ centralizes $\im(\tilde\iota_A)$
in $\gr_{\alpha\otimes\beta}(A\otimes_F B)$. So, there
is an induced $\gr(F)$-algebra homomorphism
$\gr_\alpha(A) \otimes _{\gr(F)} \gr_\beta(B) \to
\gr_{\al \otimes \beta}(A\otimes_F B)$ given by
$\tilde a \otimes \tilde b \, \mapsto \,  \tilde{a\otimes 1}
\cdot \tilde{1\otimes b}  \, = \,  \tilde{a\otimes b}$.
The description of $\Omega$ preceding \eqref{eq:griso}
shows that this algebra homomorphism is $\Omega^{-1}$.

     To prove $\varphi\otimes\psi$ is invariant under
     $\sigma\otimes\tau$, we first show that $(\sigma(e_i))_{i=1}^n$ also
     is a splitting base of $A$ for $\varphi$.
     Take any $c_1$, \ldots, $c_n\in F$. Then, as $\varphi$ is invariant
     under $\sigma$ and the $c_i$ are central in $A$ and fixed under $\sigma$,
     \[
     \varphi\big(\tsum_{i=1}^n \sigma(e_i)c_i\big) \ = \
     \varphi\big(\sigma\big(\tsum_{i=1}^n \sigma(e_i)c_i\big) \big)\ = \
     \varphi\big(\tsum_{i=1}^n e_ic_i\big) \\
     = \
     \min\limits_{1\le i\le n}\big(\varphi(e_i) +v(c_i) \big)
     \ =\ \min\limits_{1\le i\le n}\big(\varphi(\sigma(e_i)) +v(c_i) \big).
     \]
     Thus, $(\sigma(e_i))_{i=1}^n$ is a splitting base for
     $\varphi$. With the notation above, we then have
     \[
     (\varphi\otimes\psi)\bigl((\sigma\otimes\tau)(x)\bigr) \ = \
     (\varphi\otimes\psi)\bigl(\tsum_{i=1}^n\sigma(e_i)\otimes\tau(x_i)\bigr)
      \ = \  \min\limits_{1\leq i\leq
       n}\bigl(\varphi(\sigma(e_i))+\psi(\tau(x_i))\bigr).
     \]
     Since $\varphi$ is invariant under $\sigma$ and $\psi$ under $\tau$,
     we have
     \[
     \min\limits_{1\leq i\leq
       n}\bigl(\varphi(\sigma(e_i))+\psi(\tau(x_i))\bigr) \ = \
     \min\limits_{1\leq i\leq n}\bigl(\varphi(e_i)+\psi(x_i)\bigr)  \ = \
     (\varphi\otimes\psi)(x).
     \]
     Therefore, $\varphi\otimes\psi$ is invariant under
     $\sigma\otimes\tau$. To complete the proof, observe that for $a\in
     A$ and $b\in B$ we have
     \[
     \widetilde{\sigma\otimes\tau}(\widetilde{a\otimes b})  \, =\,
     (\sigma(a)\otimes\tau(b))^\sim \, = \, (\widetilde{\sigma}\otimes
     \widetilde{\tau})(\widetilde{a}\otimes\widetilde{b}),
     \]
     hence the involution $\widetilde{\sigma}\otimes\widetilde{\tau}$
     corresponds to $\widetilde{\sigma\otimes\tau}$ under the canonical
     isomorphism~\eqref{eq:griso}.
\end{proof}

The following special case will be particularly useful:

\begin{corollary}
     \label{cor:compatext}
     Let $A$ be a finite-dimensional $F$-algebra with an $F$-linear
     involution $\sigma$ and let $(K,v_K)$ be any valued field extension
     of $(F,v)$. If $\varphi$ is a surmultiplicative $v$-norm on $A$
     which is invariant under~$\sigma$, then $\varphi\otimes v_K$ is a
     surmultiplicative $v_K$-norm on $A\otimes_FK$ invariant under the
     involution $\sigma\otimes\id_K$, and
     $(\sigma\otimes\id_K)^\sim\cong
     \widetilde{\sigma}\otimes\id_{\gr(K)}$ under the canonical
     isomorphism~\eqref{eq:griso}.
\end{corollary}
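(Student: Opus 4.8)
The plan is to obtain Corollary~\ref{cor:compatext} as the special case $B=K$, $\tau=\id_K$, $\psi=v_K$ of Proposition~\ref{prop:tensprod}, combined with the field-extension refinement of the tensor-product construction recalled just before that proposition. First I would verify the hypotheses. Since $K$ is commutative and contains $F$, the map $\id_K$ is an $F$-linear involution on $K$: conditions (i) and (iii) are trivial, and (ii) reads $\id_K(xy)=xy=yx=\id_K(y)\id_K(x)$, using commutativity. Viewing $K$ as an $F$-algebra, the valuation $v_K$ is a surmultiplicative $v$-value function on it: axioms (i), (ii) of a value function are the valuation axioms, (iii) holds because $v_K$ restricts to $v$ on $F$ and is multiplicative, $v_K(1)=0$, and multiplicativity gives surmultiplicativity with equality. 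Finally $v_K$ is invariant under $\id_K$ trivially, and $A$ is finite-dimensional with $\varphi$ a $v$-norm invariant under $\sigma$ by hypothesis, so Proposition~\ref{prop:tensprod} applies.

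It then yields at once that $\varphi\otimes v_K$ is a surmultiplicative $v$-value function on $A\otimes_FK$ invariant under $\sigma\otimes\id_K$, and that $\Omega$ of \eqref{eq:griso} is an isomorphism of graded $\gr(F)$-algebras with involution
\[
\bigl(\gr_{\varphi\otimes v_K}(A\otimes_FK),\, \widetilde{\sigma\otimes\id_K}\bigr) \iso
\bigl(\gr_\varphi(A)\otimes_{\gr(F)}\gr_{v_K}(K),\, \widetilde\sigma\otimes\widetilde{\id_K}\bigr).
\]
Two small upgrades finish the job. First, because $(K,v_K)$ is a valued field extending $(F,v)$ and $\varphi$ is a $v$-norm on the finite-dimensional $A$, the refinement recalled before Proposition~\ref{prop:tensprod} (the analogue of \cite[Cor.~1.26]{TWgr}) tells us $\varphi\otimes v_K$ is in fact a $v_K$-norm on $A\otimes_FK$ and that $\Omega$ is a $\gr_{v_K}(K)$-vector space isomorphism; writing $\gr(K)=\gr_{v_K}(K)$, this is the norm statement and the $\gr(K)$-linearity asked for. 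Second, $\id_K$ fixes the filtration on $K$ and acts as the identity on each $K_\gamma$, so $\widetilde{\id_K}=\id_{\gr(K)}$ and hence $\widetilde\sigma\otimes\widetilde{\id_K}=\widetilde\sigma\otimes\id_{\gr(K)}$. Transporting the displayed isomorphism along this identification gives $(\sigma\otimes\id_K)^\sim\cong\widetilde\sigma\otimes\id_{\gr(K)}$, as required.

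I do not expect a genuine obstacle: the proof is pure specialization. The only points worth stating explicitly are that $\id_K$ really is an $F$-linear involution (which uses commutativity of $K$), and that the ``$v_K$-norm'' and ``$\gr(K)$-linear'' assertions are supplied not by Proposition~\ref{prop:tensprod} itself but by the separately-recorded field-extension version of the construction; the identification $\widetilde{\id_K}=\id_{\gr(K)}$ is immediate.
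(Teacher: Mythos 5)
Your proof is correct and follows the same route as the paper: the paper's own (very brief) argument likewise treats the statement as the special case $B=K$, $\tau=\id_K$ of Prop.~\ref{prop:tensprod}, invoking the field-extension refinement (\cite[Cor.~1.26]{TWgr}) for the $v_K$-norm and $\gr(K)$-linearity claims and the identification $\widetilde{\id_K}=\id_{\gr(K)}$. Your write-up simply makes explicit the routine hypothesis checks that the paper leaves implicit.
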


\begin{proof}
     It suffices to note that $\widetilde{\id_K}=\id_{\gr(K)}$ and that
     the canonical isomorphism is an isomorphism of $\gr(K)$-algebras,
     see \cite[Cor.~1.26]{TWgr}.
\end{proof}

Now, assume $A$ is simple and finite-dimensional, and let $n=\deg A$, so
$\DIM{A}{Z(A)}=n^2$. Recall from \cite{BoI} that involutions on $A$
are classified into two kinds and three types: an involution $\sigma$ is
of the \emph{first kind} if $\sigma\rvert_{Z(A)}=\id_{Z(A)}$; otherwise it is
of the \emph{second kind}. Involutions of the second kind are also said to be
\emph{of unitary type} (or simply \emph{unitary}). To define the type
of an involution $\sigma$ of the first kind we consider the subspaces
of symmetric and of symmetrized elements in $A$, defined by
\[
\Sym(A,\sigma) \, = \, \{x\in A\mid\sigma(x)=x\}
\qquad\text{and}\qquad
\Symd(A,\sigma) \, = \, \{x+\sigma(x)\mid x\in A\}.
\]
The involution $\sigma$ is \emph{of symplectic type} (or simply
\emph{symplectic}) if either $\charac (F)\neq2$ and
$\dim_{Z(A)}\Sym(A,\sigma)=\frac12n(n-1)$ or $\charac (F)=2$ and
$1\in\Symd(A,\sigma)$. Involutions of the first kind that are not
symplectic are said to be \emph{of orthogonal type} (or simply
\emph{orthogonal}). If $\sigma$ is orthogonal, then
$\dim_{Z(A)}\Sym(A,\sigma)=\frac12n(n+1)$. The same terminology is
used for involutions on graded simple algebras.

\begin{proposition}
\label{type.prop}
Let $\sigma$ be an $F$-linear involution on a finite-dimensional
simple $F$-algebra $A$
and let $g$ be a tame $v$-gauge on $A$ that is invariant under
$\sigma$. Suppose $F$ is the subfield of $Z(A)$ fixed under $\sigma$.

If $\sigma$ is unitary, two cases may arise:
\begin{itemize}
\item[--]
if the valuation $v$ extends uniquely from $F$ to $Z(A)$, then
$\gr_g(A)$ is a graded simple $\gr(F)$-algebra and $\tilde\sigma$ is a
unitary involution;
\item[--]
if the valuation $v$ has two different extensions to $Z(A)$, then
$\gr_g(A)$ is a direct product of two graded central simple
$\gr(F)$-algebras, which are exchanged under $\tilde\sigma$.
\end{itemize}

If $\sigma$ is symplectic, then $\tilde\sigma$ is a symplectic
involution on the graded central simple $\gr(F)$-algebra $\gr_g(A)$.

If $\sigma$ is orthogonal and $\charac(\overline{F})\neq2$, then
$\tilde\sigma$ is an orthogonal involution on the graded central
simple $\gr(F)$-algebra $\gr_g(A)$.
\end{proposition}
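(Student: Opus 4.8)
The plan is to read everything off the center of $\gr_g(A)$. Since $g$ is tame, $Z\bigl(\gr_g(A)\bigr)=\gr_g(Z(A))$ is separable over $\gr(F)$, and as $g\rest{Z(A)}$ is a $v$-norm it has graded dimension $\DIM{Z(A)}{F}$ over $\gr(F)$. Being graded semisimple, $\gr_g(A)$ is a finite direct product of graded simple algebras, each with center a graded field; hence $Z\bigl(\gr_g(A)\bigr)$ is the product of these graded fields, so $\gr_g(A)$ is graded simple exactly when $Z\bigl(\gr_g(A)\bigr)$ is a graded field, and in general the primitive idempotents of $Z\bigl(\gr_g(A)\bigr)$ index the graded simple factors. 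Finally, since $\sigma$ is $F$-linear, $\tilde\sigma$ restricts to a $\gr(F)$-automorphism of $Z\bigl(\gr_g(A)\bigr)$ of order dividing $2$, and $\tilde\sigma$ is of the first kind on $\gr_g(A)$ if and only if that restriction is the identity. So it remains to describe $\gr_g(Z(A))$ together with the action of $\tilde\sigma$ on it.

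\emph{Unitary $\sigma$.} Here $Z(A)/F$ is separable quadratic, so $\gr_g(Z(A))$ is a $2$-dimensional separable commutative $\gr(F)$-algebra, hence either a separable quadratic graded field extension of $\gr(F)$ or a copy of $\gr(F)\times\gr(F)$. Extending scalars to the Henselization $F_h$ changes neither $\gr_g(A)$ nor $Z\bigl(\gr_g(A)\bigr)$ nor $\tilde\sigma$ (Cor.~\ref{cor:compatext} together with $\gr(F_h)=\gr(F)$), and over $F_h$ the algebra $Z(A)\otimes_F F_h$ is a product of tamely ramified field extensions of $F_h$, each with graded ring a graded field; it follows that the primitive idempotents of $\gr_g(Z(A))$ correspond to the extensions of $v$ to $Z(A)$, so $\gr_g(Z(A))$ is a graded field precisely when $v$ extends uniquely. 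That $\tilde\sigma$ acts non-trivially on $\gr_g(Z(A))$ is checked over $F_h$: if $Z(A)\otimes_F F_h$ is a field, it is separable quadratic over $F_h$ with $\sigma\otimes\id$ generating its Galois group, and then $\tilde\sigma$ is non-trivial on the residue field (if $Z(A)/F$ is unramified) or on a uniformizer (if ramified, which forces $\charac\ov F\neq2$); while if $Z(A)\otimes_F F_h$ is a product of two fields, $\sigma\otimes\id$ must interchange them, since a $\sigma$-invariant valuation on $Z(A)$ would be the unique extension of its restriction to $F$. Consequently, in the graded-field case $\tilde\sigma$ acts non-trivially on the center, so $\gr_g(A)$ is graded simple and $\tilde\sigma$ is unitary; in the $\gr(F)\times\gr(F)$ case $\tilde\sigma$ interchanges the two primitive idempotents, hence exchanges the two graded central simple factors.

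\emph{First-kind $\sigma$, residue characteristic $\neq2$.} Now $\sigma\rest{Z(A)}=\id$, so $Z(A)=F$, $\gr_g(A)$ is graded central simple over $\gr(F)$ of degree $\deg A=n$, and $\tilde\sigma$ is of the first kind. To compute its type, restrict $g$ to the subspace $\Sym(A,\sigma)$; the restriction of a norm to a subspace is a norm (\cite{RTW}), so $\dim_{\gr(F)}\gr_g(\Sym(A,\sigma))=\dim_F\Sym(A,\sigma)$, and $\gr_g(\Sym(A,\sigma))$ embeds into $\Sym\bigl(\gr_g(A),\tilde\sigma\bigr)$. When $\charac\ov F\neq2$ the element $\tfrac12\in F$ has value $0$, so $x\mapsto\tfrac12\bigl(x+\sigma(x)\bigr)$ is value-non-decreasing and is the identity on $\Sym(A,\sigma)$; a short computation (using that $2$ is invertible in $\gr(F)$, so that $\tilde\sigma(\xi)=\xi$ together with $\tilde\sigma(\xi)=-\xi$ forces $\xi=0$) shows the embedding above is onto. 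Hence $\dim_{\gr(F)}\Sym\bigl(\gr_g(A),\tilde\sigma\bigr)=\dim_F\Sym(A,\sigma)$, which equals $\tfrac12 n(n-1)$ or $\tfrac12 n(n+1)$ according as $\sigma$ is symplectic or orthogonal; since $\charac\gr(F)=\charac\ov F\neq2$, the same numerics determine the type of $\tilde\sigma$, so $\tilde\sigma$ has the same type as $\sigma$. This settles the orthogonal case, and the symplectic case in residue characteristic $\neq2$.

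\emph{The remaining case $\sigma$ symplectic, $\charac\ov F=2$, is the main obstacle}, because $2$ is no longer a unit and the symplectic type is now detected by the non-numerical condition $1\in\Symd$. Again I would reduce to $F$ Henselian (scalar extension to $F_h$ preserves both $1\in\Symd(A,\sigma)$ and the pair $(\gr_g(A),\tilde\sigma)$). Over a Henselian field a tame central simple algebra carrying a symplectic involution cannot be totally ramified — a tame totally ramified algebra in residue characteristic $2$ has odd degree, hence no symplectic involution — so one can split off a nontrivial inertial part, use that the graded involution of an inertial factor is symplectic if and only if its residue involution is, and invoke that ``symplectic $\otimes$ (first kind) $=$ symplectic'' (valid in characteristic $2$ as well, since $1=a+\sigma(a)$ gives $1\otimes1=(a\otimes1)+(\sigma\otimes\tau)(a\otimes1)$); combined with Prop.~\ref{prop:tensprod} to control the gauge, this yields $1\in\Symd\bigl(\gr_g(A),\tilde\sigma\bigr)$. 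Alternatively and more directly, one chooses $a\in A$ with $a+\sigma(a)=1$ of largest value in its coset modulo $\Sym(A,\sigma)$ and shows — and this is the crux, where tameness is essential — that this value must be $0$, so that $\tilde a+\tilde\sigma(\tilde a)=1$ and $\tilde\sigma$ is symplectic.
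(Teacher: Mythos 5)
Your treatment of the unitary case and of the first-kind case with $\charac(\ov F)\neq2$ is essentially sound and parallels the paper's proof: the paper obtains the count of simple components of $\gr_g(A)$ directly from \cite[Cor.~2.5]{TWgr} and then, just as you do, checks that $\tilde\sigma$ acts nontrivially on $\gr\bigl(Z(A)\bigr)$ (splitting into the cases $\charac(\ov F)\neq2$, where one takes $z$ with $\sigma(z)=-z$, and $\charac(\ov F)=2$, where tameness forces $Z(A)/F$ unramified and one passes to residues); in residue characteristic $\neq2$ it identifies $\Sym(\gr_g(A),\tilde\sigma)$ with $\gr\bigl(\Sym(A,\sigma)\bigr)$ and counts dimensions --- your averaging map $x\mapsto\frac12\bigl(x+\sigma(x)\bigr)$ is just a direct proof of that identification. (Your Henselization detour in the unitary case is workable but glosses over why the restriction of the gauge to each field factor of $Z(A)\otimes_FF_h$ has a graded field as its associated graded ring; one needs something like \cite[Prop.~1.8]{TWgr} there, or simply the citation of \cite[Cor.~2.5]{TWgr} that the paper uses.)

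The genuine gap is exactly the case you flag: $\sigma$ symplectic with $\charac(\ov F)=2$. Neither of your sketches is a proof. In the second, an element $a$ with $a+\sigma(a)=1$ ``of largest value in its coset modulo $\Sym(A,\sigma)$'' need not exist (the relevant set of values need not attain a maximum), and no argument is given that this value is $0$ --- that is precisely the assertion to be proved. In the first, decomposing $(A,\sigma)$ into inertial and totally ramified parts compatibly with the involution is a substantial structure result not established here, and the step ``the graded involution of an inertial factor is symplectic iff its residue involution is'' is essentially the statement in question. The paper's actual argument is a short dimension count that you could substitute: from $\tilde x+\tilde\sigma(\tilde x)=\bigl(x+\sigma(x)\bigr)^\sim$ or $0$ one gets $\Symd(\gr(A),\tilde\sigma)\subseteq\gr\bigl(\Symd(A,\sigma)\bigr)$; since $\tilde\sigma$ is of the first kind on the graded central simple algebra $\gr_g(A)$ of degree $n$, $\dim_{\gr(F)}\Symd(\gr(A),\tilde\sigma)=\frac12n(n-1)$, while $\dim_F\Symd(A,\sigma)=\frac12n(n-1)$ because $\sigma$ is symplectic, and $\dim_{\gr(F)}\gr\bigl(\Symd(A,\sigma)\bigr)=\dim_F\Symd(A,\sigma)$ because $g$ is a norm; hence the inclusion is an equality, and $1\in\Symd(A,\sigma)$ gives $\tilde1\in\Symd(\gr(A),\tilde\sigma)$, so $\tilde\sigma$ is symplectic.
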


\begin{proof}
Suppose first that $\sigma$ is unitary, so $Z(A)/F$ is a quadratic
extension. By \cite[Cor.~2.5]{TWgr}, the number of simple components
of $\gr(A)$ equals the number of extensions of $v$ to $Z(A)$. Therefore,
to complete the description of $\tilde\sigma$ it suffices to show that
$\tilde\sigma$ does not identically fix
$Z\bigl(\gr(A)\bigr)=\gr\bigl(Z(A)\bigr)$.
Since  the Galois group $\Gal(Z(A)/F)$ acts transitively
on the set of extensions of $v$ to $Z(A)$, see \cite[Th.~3.2.15,
p.~64]{EP}, if there are two such extensions, then $\sigma\rest {Z(A)}$
must permute
them; then $\tilde \sigma$ permutes  the corresponding components of
$\gr(Z(A))$. So, we may assume that $v$ has a unique extension to
$Z(A)$.
Then, $\gr\big(Z(A)\big)$ is a graded field separable over
$\gr(F)$, and $\DIM{\gr\big(Z(A)\big)}{\gr(F)}=
\DIM{Z(A)}F = 2$ since $g\rest {Z(A)}$ is a norm,
by \cite[Prop.~2.5]{RTW}.
If $\charac
(\overline{F})\neq2$ we can find $z\in Z(A)$ nonzero such that
$\sigma(z)=-z$, hence $\tilde\sigma(\tilde z)=-\tilde z\neq\tilde
z$. If $\charac (\overline{F})=2$ the separability of 
$\gr\bigl(Z(A)\bigr)$ over
$\gr(F)$ implies by \cite[Th.~3.11, Def.~3.4]{HW1} that
$\Gamma_{Z(A)} = \Gamma_F$ and $Z(A)_0$ is separable over $F_0$;
furthermore, $\DIM{Z(A)_0}{F_0}= \DIM{Z(A)}F = 2$ since
$g\rest {Z(A)}$ is a norm, by \cite[Prop.~2.5]{RTW}.
So $Z(A)$ is unramified Galois over $F$, hence
the non-trivial automorphism
$\sigma\rvert_{Z(A)}$ induces a non-trivial automorphism of the
residue algebra $Z(A)_0$, by \cite[Th.~19.6,
p.~124]{E}, showing that
$\tilde\sigma\rest{\gr(Z(A))}$ is nontrivial.

Suppose next that $\sigma$ is of the first kind, so $Z(A)=F$. For $x\in A$
we have $\tilde x+\tilde\sigma(\tilde x)=\bigl(x+\sigma(x)\bigr)^\sim$
~or~$0$. On the other hand, $\sigma(x)=x$ implies $\tilde\sigma(\tilde
x)=\tilde x$. Therefore, the following inclusions are clear:
\begin{equation}\label{grsubsets}
\gr\bigl(\Sym(A,\sigma)\bigr) \, \subseteq  \, \Sym(\gr(A),\tilde\sigma),
\qquad
\Symd(\gr(A),\tilde\sigma) \, \subseteq \, \gr\bigl(\Symd(A,\sigma)\bigr).
\end{equation}
If $\charac(\overline{F})\neq2$ (hence $\charac (F)\neq2$) we have
\[
\Sym(A,\sigma) \, = \, \Symd(A,\sigma)\qquad\text{and}\qquad
\Sym(\gr(A),\tilde\sigma) \, = \, \Symd(\gr(A),\tilde\sigma),
\]
so the inclusions in \eqref{grsubsets} above yield
$\gr\bigl(\Sym(A,\sigma)\bigr)=\Sym(\gr(A),\tilde\sigma)$. Since the
type of an involution can be determined from the dimension of the
space of symmetric elements, it follows that $\tilde\sigma$ has the
same type as $\sigma$.

To complete the proof, suppose $\charac(\overline{F})=2$ and $\sigma$ is
symplectic, and let $n=\deg A$. Since $\tilde\sigma$ is of the first
kind we have
\[
\dim_{\gr(F)}\Symd(\gr(A),\tilde\sigma) \ = \ {\textstyle\frac12}n(n-1).
\]
On the other hand, since $\sigma$ is symplectic we have
\[
\dim_F\Symd(A,\sigma) \, = \, {\textstyle\frac12}n(n-1)
\]
(independently of whether $\charac (F)=2$). Since $g$ is a norm we
have
\[
\dim_F\Symd(A,\sigma) \ = \ \dim_{\gr(F)}\gr\bigl(\Symd(A,\sigma)\bigr),
\]
hence
$\Symd(\gr(A),\tilde\sigma)=\gr\bigl(\Symd(A,\sigma)\bigr)$. Since
$1\in\Symd(A,\sigma)$, it follows that
$\tilde1\in\Symd(\gr(A),\tilde\sigma)$, hence $\tilde\sigma$ is
symplectic.
\end{proof}

\begin{remark}
If $\sigma$ is orthogonal and $\charac(\overline{F})=2$, the involution
$\tilde\sigma$ may be symplectic, as the following example shows: let
$(F,v)$ be a valued field with $\charac (F)=0$ and
$\charac(\overline{F})=2$, and let $A=M_2(F)$. Define an orthogonal
involution $\sigma$ on $A$ by
\[
\textstyle
\sigma
\left(\begin{smallmatrix}
a&b\\
c&d
\end{smallmatrix}
\right)
    \, = \,
\left(
\begin{smallmatrix}
d&b\\
c&a
\end{smallmatrix}
\right)
\]
and a $v$-gauge $g$ by
\[
g\left(
\begin{smallmatrix}
a&b\\
c&d
\end{smallmatrix}
\right)
    \ = \
\min\bigl(v(a),v(b),v(c),v(d)\bigr).
\]
This gauge is clearly invariant under $\sigma$. We have
$\gr_g(A)=M_2\bigl(\gr(F)\bigr)$ with the entrywise grading, and
\[
\left(
\begin{smallmatrix}
1&0\\
0&1
\end{smallmatrix}
\right) \ = \
\left(\begin{smallmatrix}
1&0\\
0&0
\end{smallmatrix}
\right)
+ \,
\tilde\sigma\left(
\begin{smallmatrix}
1&0\\
0&0
\end{smallmatrix}
\right)\, \in \, \Symd(\gr_g(A),\tilde\sigma).
\]
Therefore, $\tilde\sigma$ is symplectic.

Henceforth, we systematically avoid orthogonal involutions in
characteristic~$2$.
\end{remark}

In \cite[\S3]{RTW}, a notion of compatibility is defined between norms and
hermitian forms. In the rest of this section, we relate that notion
    of compatibility with the invariance of value functions under involutions.

Let $D$ be a finite-dimensional division $F$-algebra with an $F$-linear
involution $\tau$. Suppose $v$ extends to a valuation
$w$ on $D$ invariant under $\tau$ and let $V$ be a finite-dimensional
right $D$-vector space. Consider a nondegenerate hermitian form
$h\colon V\times V\to D$ with respect to $\tau$, and a $w$-norm
$\alpha$ on $V$. The dual norm $\alpha^\sharp$ is defined by
\begin{equation}\label{dualnormdef}
\alpha^\sharp(x) \ = \ \min\{\,
w\bigl(h(x,y)\bigr)-\alpha(y)\mid y\in V,\;y\neq0\}
\qquad\text{for $x\in V$},
\end{equation}
see \cite[\S3]{RTW}. The norm $\alpha$ is said to be
\emph{compatible with} $h$ if and
only if $\alpha^\sharp=\alpha$ (see \cite[Prop.~3.5]{RTW}).
This is the condition needed in order for $h$ to induce a
nondegenerate graded hermitian form on $\gr_\alpha(V)$.
On the simple algebra $\End_D(V)$
there is   the involution $\ad_h$
adjoint to $h$, defined by
\[
h(\ad_h(f)(x),y) \ =  \ h(x, f(y))  \quad\quad \text{for all } x,y\in V.
\]
    There is also  the well-defined surmultiplicative $v$-value function
$\End(\alpha)$  on $\End_D(V)$ defined by
\[
\End(\alpha)(f) \ = \ \min\{\alpha\bigl(f(x)\bigr)-\alpha(x)\mid x\in
V,\;x\neq0\}.
\]
Recall that $\End(\alpha)$ is a $v$-gauge if and only if $w$ on $D$ is
defectless over $v$,
see \cite[Prop.~1.19]{TWgr}.

\enlargethispage{\baselineskip}
\enlargethispage{\baselineskip}

\begin{proposition}
\label{compat.prop}
The value functions $\End(\alpha)$ and $\End(\alpha^\sharp)$ are
related by
\begin{equation}
\label{compat.eq}
\End(\alpha)\circ\ad_h \ = \ \End(\alpha^\sharp).
\end{equation}
Moreover, the following conditions are equivalent:
\begin{enumerate}
\item[(a)]
$\End(\alpha)$ is invariant under $\ad_h$;
\item[(b)]
$\End(\alpha^\sharp) \, = \, \End(\alpha)$;
\item[(c)]
$\alpha-\alpha^\sharp$ is constant on $V$;
\item[(d)]
there is a constant $\gamma\in\Gamma$ such that $\alpha-\gamma$ is
compatible with $h$.
\end{enumerate}
\end{proposition}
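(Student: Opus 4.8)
The plan is to first establish the identity \eqref{compat.eq}, and then derive the equivalences largely as formal consequences, using the definition of the dual norm and the adjoint involution.

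For the identity, I would unwind the definitions directly. Given $f\in\End_D(V)$, we have
\[
\bigl(\End(\alpha)\circ\ad_h\bigr)(f) \ = \ \min\{\alpha\bigl(\ad_h(f)(x)\bigr)-\alpha(x)\mid x\in V,\ x\neq 0\}.
\]
The key is to rewrite $\alpha\bigl(\ad_h(f)(x)\bigr)$ using the dual norm. By definition, $\alpha^\sharp(y)=\min_{z\neq 0}\bigl(w(h(y,z))-\alpha(z)\bigr)$, so for a norm one expects a biduality statement $\alpha^{\sharp\sharp}=\alpha$ and, more usefully, that $\alpha(y)=\min_{z\neq 0}\bigl(w(h(y,z))-\alpha^\sharp(z)\bigr)$; this is the sort of fact recorded in \cite[\S3]{RTW}, and I would cite it. Then, using $h(\ad_h(f)(x),y)=h(x,f(y))$, I compute
\[
\alpha\bigl(\ad_h(f)(x)\bigr) \ = \ \min_{y\neq 0}\bigl(w(h(\ad_h(f)(x),y))-\alpha^\sharp(y)\bigr) \ = \ \min_{y\neq 0}\bigl(w(h(x,f(y)))-\alpha^\sharp(y)\bigr).
\]
Substituting this and taking the minimum over $x\neq 0$ first, one recognizes $\min_{x\neq 0}\bigl(w(h(x,f(y)))-\alpha(x)\bigr)=\alpha^\sharp(f(y))$ (using the symmetry/hermitian property of $h$ to move the argument, together with the biduality identity again). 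This yields $\bigl(\End(\alpha)\circ\ad_h\bigr)(f)=\min_{y\neq 0}\bigl(\alpha^\sharp(f(y))-\alpha^\sharp(y)\bigr)=\End(\alpha^\sharp)(f)$, which is \eqref{compat.eq}. The main obstacle here is getting the biduality/reciprocity statement for dual norms in exactly the form needed and handling the order of the two minima carefully; this is the only genuinely non-formal step.

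Granting \eqref{compat.eq}, the equivalences follow quickly. For (a)$\Leftrightarrow$(b): invariance of $\End(\alpha)$ under $\ad_h$ means $\End(\alpha)\circ\ad_h=\End(\alpha)$, which by \eqref{compat.eq} is exactly $\End(\alpha^\sharp)=\End(\alpha)$. For (b)$\Rightarrow$(c): I would use that $\alpha$ and $\alpha^\sharp$ are both $w$-norms on $V$, and that a $w$-norm is determined up to the value function $\End(\,\cdot\,)$ it induces only up to an additive constant — more precisely, two $w$-norms $\alpha,\alpha'$ on $V$ satisfy $\End(\alpha)=\End(\alpha')$ if and only if $\alpha-\alpha'$ is constant. (This should be available from \cite{RTW}, or can be seen directly: $\End(\alpha)=\End(\alpha')$ forces, applying the identity map and its inverse, $\alpha(x)-\alpha'(x)\le\alpha(y)-\alpha'(y)$ for all $x,y$, hence $\alpha-\alpha'$ is constant.) For (c)$\Rightarrow$(d): if $\alpha-\alpha^\sharp=\gamma$ is constant, set $\beta=\alpha-\tfrac{\gamma}{2}$; using the identity $(\alpha-\delta)^\sharp=\alpha^\sharp+\delta$ for a constant $\delta$ (immediate from \eqref{dualnormdef}), one computes $\beta^\sharp=\alpha^\sharp+\tfrac{\gamma}{2}=\alpha-\gamma+\tfrac{\gamma}{2}=\beta$, so $\beta$ is compatible with $h$. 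For (d)$\Rightarrow$(a): if $\alpha-\gamma$ is compatible with $h$, then $(\alpha-\gamma)^\sharp=\alpha-\gamma$, so $\alpha^\sharp=(\alpha-\gamma)^\sharp-\gamma=(\alpha-\gamma)-\gamma=\alpha-2\gamma$; thus $\alpha-\alpha^\sharp=2\gamma$ is constant, giving (c), and then (c)$\Rightarrow$(b)$\Rightarrow$(a) closes the loop. I would actually organize the proof as (a)$\Leftrightarrow$(b) via \eqref{compat.eq}, then (b)$\Rightarrow$(c)$\Rightarrow$(d)$\Rightarrow$(b), using the elementary scaling behavior of $\sharp$ throughout.

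The expected distribution of difficulty: the proof of \eqref{compat.eq} is where essentially all the work lies, and within it the reciprocity formula $\alpha(y)=\min_{z\neq 0}\bigl(w(h(y,z))-\alpha^\sharp(z)\bigr)$ for norms is the crux — I would either quote it from \cite[\S3]{RTW} or prove it by a short argument showing $\alpha^{\sharp\sharp}=\alpha$ for norms, which in turn rests on the fact that a norm has an orthogonal splitting base with respect to $h$ (again from \cite{RTW}). Everything after that, the four equivalences, is bookkeeping with additive constants and the trivial identity $(\alpha+\delta)^\sharp=\alpha^\sharp-\delta$.
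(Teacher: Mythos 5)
Your proposal is correct in substance and reaches the proposition by a somewhat different route than the paper. For the equivalences your bookkeeping with $(\alpha-\gamma)^\sharp=\alpha^\sharp+\gamma$ is exactly what the paper does for (c)$\Leftrightarrow$(d), and (a)$\Leftrightarrow$(b) is the same formal consequence of \eqref{compat.eq}. The difference is in \eqref{compat.eq}: the paper computes both sides explicitly in a splitting base $(e_i)$ for $\alpha$ and its $h$-dual base $(e_i^\sharp)$, using \cite[Lemma~3.4]{RTW} (the dual base is a splitting base for $\alpha^\sharp$ with $\alpha^\sharp(e_i^\sharp)=-\alpha(e_i)$), whereas you argue basis-free by writing $\alpha=\alpha^{\sharp\sharp}$, moving $\ad_h(f)$ across $h$, and interchanging the two minima; your interchange is legitimate (both iterated minima equal the joint minimum over pairs $(x,y)$), provided you also invoke $w\circ\tau=w$ to pass from $h(x,f(y))$ to $h(f(y),x)$ when recognizing $\alpha^\sharp(f(y))$. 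So your crux — biduality — is the same underlying input as the paper's, just packaged differently. Two justifications you offer are shaky, though the facts themselves are fine and citable. First, biduality does \emph{not} rest on the existence of an $h$-orthogonal splitting base: such a base need not exist at all (e.g.\ $h$ alternating, or residue characteristic $2$); $\alpha^{\sharp\sharp}=\alpha$ follows instead from the $h$-dual of an \emph{arbitrary} splitting base being a splitting base for $\alpha^\sharp$ with the negated values, i.e.\ precisely \cite[Lemma~3.4]{RTW}. Second, your parenthetical proof that $\End(\alpha)=\End(\alpha')$ forces $\alpha-\alpha'$ constant (``applying the identity map and its inverse'') does not work as stated: equality of the two $\End$ value functions gives no direct handle on $\min_x\bigl(\alpha'(x)-\alpha(x)\bigr)$, which is a Hom-type quantity, not a value of $\End(\alpha)$ or $\End(\alpha')$ on any endomorphism. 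A correct direct argument uses rank-one endomorphisms $y\hsp\lambda(\cdot)$ with $\lambda$ chosen so that $x$ attains the dual-norm minimum; alternatively (and this is what the paper does) the statement is exactly \cite[Prop.~1.22]{TWgr}, so cite that rather than \cite{RTW}. With those two repairs your argument is complete and equivalent in strength to the paper's.
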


\begin{proof}
Let $(e_i)_{i=1}^n$ be a splitting base of $V$ for $\alpha$. The
$h$-dual base $(e_i^\sharp)_{i=1}^n$ for $V$ is a splitting base  for~
$\alpha^\sharp$, by \cite[Lemma~3.4]{RTW}. Fix some $f\in\End_DV$, and
let
\[
f(e_j^\sharp) \, = \,  \tsum_{i=1}^ne_i^\sharp d_{ij}\qquad\text{for some
     $d_{ij}\in D$}.
\]
Then, computation yields
\[
\ad_h(f)(e_j) \ = \ \tsum_{i=1}^ne_i\tau(d_{ji}).
\]
We may compute $\End(\alpha^\sharp)(f)$ using the splitting base
$(e_i^\sharp)_{i=1}^n$, and $\End(\alpha)\bigl(\ad_h(f)\bigr)$ using
the splitting base $(e_i)_{i=1}^n$, obtaining
\begin{align*}
\End(\alpha^\sharp)(f)&  \ = \
    \min_{1\leq i,j\leq n}\bigl(\alpha^\sharp(e_i^\sharp)+
w(d_{ij})-\alpha^\sharp(e_j^\sharp)\bigr),\\
\End(\alpha)\bigl(\ad_h(f)\bigr)&  \ = \
\min_{1\leq i,j\leq n}\bigl(\alpha(e_i)+w\bigl(\tau(d_{ji})\bigr)
-\alpha(e_j)\bigr).
\end{align*}
Equation~\eqref{compat.eq} follows since
$\alpha^\sharp(e_i^\sharp)=-\alpha(e_i)$, see \cite[Lemma~3.4]{RTW}.

The equivalence of (a) and (b) readily follows from \eqref{compat.eq},
and the equivalence of (b) and (c) from \cite[~Prop.~1.22]{TWgr}.
\par\noindent
(c)~$\Leftrightarrow$~(d):
By the definition of the dual norm in \eqref{dualnormdef}, for any constant
$\gamma$ in the divisible group  $\Gamma$, $(\alpha - \gamma)^\sharp
= \alpha^\sharp + \gamma$. Therefore, $\alpha - \gamma$ is compatible
with $h$ if and only if $(\alpha -\gamma)^\sharp = \alpha -\gamma$,
which holds if and only if $\alpha - \alpha^\sharp = 2\gamma$.
\end{proof}

Suppose the equivalent conditions of Prop.~\ref{compat.prop}
hold, and write simply $g_\alpha$ for $\End(\alpha)$. Recall from
\cite[Prop.~1.19]{TWgr} that the graded algebra $\gr_{g_\alpha}(\End_DV)$
may be identified with $\End_{\gr(D)}\bigl(\gr_\alpha(V)\bigr)$ so
that for $f\in \End_DV$ the element $\tilde
f\in\gr_{g_\alpha}(\End_DV)$ is viewed as the map $\tilde
f\colon\gr_\alpha(V)\to\gr_\alpha(V)$ defined by
\[
\tilde f(\tilde x) \ = \
\begin{cases}
\tilde{f(x)}&\text{if
     $\alpha\bigl(f(x)\bigr)=\alpha(x)+g_\alpha(f)$},\\
0&\text{if $\alpha\bigl(f(x)\bigr)>\alpha(x)+g_\alpha(f)$}.
\end{cases}
\]
On the other hand, after adding a constant if necessary,
we may assume $\alpha$ is compatible with
$h$; hence we may define a graded hermitian form
\[
\tilde {h}\colon\gr_\alpha(V)\times\gr_\alpha(V) \, \to \, \gr_w(D)
\]
(with respect to the involution $\tilde\tau$) as follows: for $x$,
$y\in V$,
\[
\tilde {h}(\tilde x,\tilde y) \, = \, h(x,y)+D^{>\alpha(x)+\alpha(y)}
\ =\
\begin{cases}
\tilde{h(x,y)}&\text{if $w\bigl(h(x,y)\bigr)=\alpha(x)+\alpha(y)$},\\
0&\text{if $w\bigl(h(x,y)\bigr)>\alpha(x)+\alpha(y)$}.
\end{cases}
\]
This hermitian form is well-defined and
nondegenerate (see \cite[Remark~3.2]{RTW}), so we
may consider the adjoint involution $\ad_{\tilde {h}}$  on
$\End_{\gr_w(D)}\bigl(\gr_\alpha(V)\bigr)=\gr_{g_\alpha}(\End_DV)$.

\begin{proposition}
\label{adjoint.prop}
Assuming $\alpha$ is compatible with $h$, the involution $\tilde\ad_h$
on $\gr_{g_\alpha}(\End_DV)$ is the adjoint involution of $\tilde
{h}$ under the identification above; i.e.,
\[
\tilde\ad_h \, = \, \ad_{\tilde {h}}.
\]
In particular, the value function $g_\alpha=\End(\alpha)$
is $\sigma$-special $($see Def.~\ref{specialdef}$)$
  if and only if
$\tilde {h}$ is anisotropic.
\end{proposition}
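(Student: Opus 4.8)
The plan is to verify the identity $\tilde\ad_h = \ad_{\tilde h}$ directly from the defining property of the adjoint involution, using the splitting bases supplied by earlier results, and then deduce the ``$\sigma$-special iff anisotropic'' statement by combining this identity with Prop.~\ref{eqcond.prop}.

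First I would recall that $\ad_h$ is characterized by $h(\ad_h(f)(x),y)=h(x,f(y))$ for all $x,y\in V$, and similarly $\ad_{\tilde h}$ is characterized by $\tilde h(\ad_{\tilde h}(\xi)(\bar x),\bar y)=\tilde h(\bar x,\xi(\bar y))$ for all homogeneous $\bar x,\bar y\in\gr_\alpha(V)$ and all homogeneous $\xi\in\gr_{g_\alpha}(\End_DV)$. Since both $\tilde\ad_h$ and $\ad_{\tilde h}$ are involutions on $\gr_{g_\alpha}(\End_DV)$, it suffices to check that $\tilde\ad_h(\tilde f)$ satisfies the characterizing property of $\ad_{\tilde h}(\tilde f)$ for each $f\in\End_DV$, i.e.\ that
\[
\tilde h\bigl(\tilde\ad_h(\tilde f)(\tilde x),\tilde y\bigr) \ = \ \tilde h\bigl(\tilde x,\tilde f(\tilde y)\bigr)\qquad\text{for all }x,y\in V.
\]
Fix a splitting base $(e_i)_{i=1}^n$ of $V$ for $\alpha$, and let $(e_i^\sharp)_{i=1}^n$ be the $h$-dual base, which by \cite[Lemma~3.4]{RTW} is a splitting base for $\alpha^\sharp=\alpha$ (using compatibility) and satisfies $\alpha(e_i^\sharp)=-\alpha(e_i)$. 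The key computation already recorded in the proof of Prop.~\ref{compat.prop} expresses $\ad_h(f)$ in terms of the matrix of $f$ relative to these bases, namely if $f(e_j^\sharp)=\sum_i e_i^\sharp d_{ij}$ then $\ad_h(f)(e_j)=\sum_i e_i\tau(d_{ji})$. The point is to show that passing to graded images is compatible with all of this: one checks that $\widetilde{e_i^\sharp}$ is the $\tilde h$-dual base of $\widetilde{e_i}$ in $\gr_\alpha(V)$, that $\tilde f$ has matrix $(\widetilde{d_{ij}})$ or $0$ in the appropriate homogeneous components (with the zeros occurring exactly when the value inequality in the definition of $\tilde f$ is strict), and that $\tilde\tau(\widetilde{d})=\widetilde{\tau(d)}$ since $w$ is $\tau$-invariant. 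Plugging these into the two sides of the displayed identity and evaluating $\tilde h$ on the basis vectors reduces the claim to the scalar identity $\tilde h(\widetilde{e_i},\widetilde{e_j})=\delta_{ij}$ and bookkeeping of which terms vanish; this is a routine but slightly delicate case analysis comparing $\End(\alpha)(f)$ with the individual $\alpha(e_i^\sharp)+w(d_{ij})-\alpha(e_j^\sharp)$.

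Granting $\tilde\ad_h=\ad_{\tilde h}$, the final assertion is immediate: by Prop.~\ref{eqcond.prop} (and the definition of $\sigma$-special), $g_\alpha=\End(\alpha)$ is $\ad_h$-special if and only if it is invariant under $\ad_h$ and $\widetilde{\ad_h}$ is anisotropic. Invariance under $\ad_h$ holds by Prop.~\ref{compat.prop}(a), since compatibility of $\alpha$ with $h$ gives $\alpha^\sharp=\alpha$, hence $\alpha-\alpha^\sharp$ constant. So $g_\alpha$ is $\ad_h$-special if and only if $\ad_{\tilde h}=\widetilde{\ad_h}$ is anisotropic, and an adjoint involution $\ad_{\tilde h}$ on $\End_{\gr_w(D)}(\gr_\alpha(V))$ is anisotropic precisely when the hermitian form $\tilde h$ is anisotropic — there is a nonzero $\xi$ with $\ad_{\tilde h}(\xi)\xi=0$ exactly when $\tilde h$ has a nonzero isotropic vector (take $\xi$ with image containing an isotropic vector, or conversely read off an isotropic vector from the image of such a $\xi$).

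The main obstacle is the middle step: carefully tracking the graded images of $f$, $\ad_h(f)$, and the dual base through the identifications, and in particular handling the homogeneous components where the ``strict inequality'' case makes $\tilde f$ or its matrix entries vanish. One must make sure that the vanishing on one side of the adjunction identity is matched by vanishing on the other — this is where compatibility of $\alpha$ with $h$ (equivalently $\alpha^\sharp=\alpha$, so that $\tilde h$ is nondegenerate and $\widetilde{e_i^\sharp}$ is genuinely the $\tilde h$-dual base) does the real work. Once the bases are correctly matched, everything else is formal.
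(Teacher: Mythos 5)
Your overall skeleton is sound and matches the paper's: reduce to the adjunction identity $\tilde h\bigl(\tilde\ad_h(\tilde f)(\tilde x),\tilde y\bigr)=\tilde h\bigl(\tilde x,\tilde f(\tilde y)\bigr)$, then get the final equivalence from Prop.~\ref{compat.prop} (invariance of $g_\alpha$ under $\ad_h$), Prop.~\ref{eqcond.prop}, and the fact that $\ad_{\tilde h}$ is anisotropic exactly when $\tilde h$ is. Where you diverge is in how the identity is verified: you propose a matrix computation relative to a splitting base and its $h$-dual base, tracking which entries of $\tilde f$ and $\tilde\ad_h(\tilde f)$ vanish. That route can be made to work, but the paper avoids it entirely: since $g_\alpha$ is invariant under $\ad_h$, both sides of the identity lie in the \emph{same} homogeneous component $D_\epsilon$ with $\epsilon=\alpha(x)+\alpha(y)+g_\alpha(f)$, and the ungraded identity $h(x,f(y))=h(\ad_h(f)(x),y)$ then forces either both sides to equal the residue of this common element (when $w\bigl(h(x,f(y))\bigr)=\epsilon$, which also pins down $\alpha(f(y))$ and $\alpha(\ad_h(f)(x))$) or both to be $0$ (when $w\bigl(h(x,f(y))\bigr)>\epsilon$). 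This makes the ``matched vanishing'' you flag as the main obstacle automatic, with no basis bookkeeping. If you do pursue your computation, note two corrections: the scalar identity you want is the duality relation $\tilde h(\tilde e_i,\widetilde{e_j^\sharp})=\delta_{ij}\tilde 1$ (not $\tilde h(\tilde e_i,\tilde e_j)=\delta_{ij}$, which is false in general), and it holds because $h(e_i,e_j^\sharp)=\delta_{ij}$ while $\alpha(e_i)+\alpha(e_j^\sharp)=\alpha(e_i)-\alpha(e_j)$ by compatibility; also, for the last step the paper justifies ``$\ad_{\tilde h}$ anisotropic iff $\tilde h$ anisotropic'' by the graded analogue of the ungraded argument together with the cited fact that $\tilde h$ is anisotropic iff $\tilde h(\tilde x,\tilde x)\neq 0$ for all nonzero $x\in V$, which is the precise form your parenthetical sketch needs.
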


\begin{proof}
To verify the equality of graded involutions, it suffices to show, for all
$x$, $y\in V$ and $f\in\End_DV$,
\[
\tilde {h}\bigl(\ad_{\tilde {h}}(\tilde f)(
\tilde x),\tilde y\bigr) \ = \
\tilde {h}\bigl(\tilde\ad_h(\tilde f)(\tilde x),\tilde y\bigr).
\]
   From the definition of $\ad_{\tilde {h}}$, it is equivalent to prove
\begin{equation}\label{ad.eq}
\tilde {h}\bigl(\tilde x,\tilde f(\tilde y)\bigr) \ = \
\tilde {h}\bigl(\tilde\ad_h(\tilde f)(\tilde x),\tilde y\bigr).
\end{equation}
Since $\alpha$ is compatible with $h$,  Prop.~\ref{compat.prop}
shows
$g_\alpha$ is invariant under $\ad_h$; hence, ${g_\alpha(\ad_h(f))
= g_\alpha(f)}$. Therefore, each side of \eqref{ad.eq} lies in $D_\epsilon$\,,
where $\epsilon = \alpha(x) + \alpha (y) + g_\alpha(f)$.  Suppose
${w\bigl(h(x,f(y))\bigr) = \epsilon}$.  Then, necessarily $\alpha(f(y)) =
g_\alpha(f) + \alpha(y)$, and the left side of \eqref{ad.eq} equals
$h(x, f(y))^\sim$.  But since ${h(x, f(y)) = h\bigl(\ad_h(f)(x),y\bigr)}$,
we then
also have $g_\alpha\big(\ad_h(f)(x)\big) =g_\alpha\big(\ad_h(f)\big)+
\alpha (x)$,
and the right side of \eqref{ad.eq} becomes
$h\bigl(\ad_h(f)(x),y\bigr)^\sim\,$.
So, \eqref{ad.eq} then holds.   But, if $w\big(h(x,f(y))\big)>\epsilon$, then
each side of~\eqref{ad.eq} is $0$.  Thus, the equality \eqref{ad.eq} holds
in all cases, so that $\tilde\ad_h \, = \, \ad_{\tilde {h}}$.

Since $g_\alpha$ is invariant under $\ad_h$,
Prop.~\ref{eqcond.prop}(b) holds if and only if
$\tilde{\ad}_h$  is anisotropic.  But, the involution ${\tilde{\ad}_h =
\ad_{\tilde {h}}}$ is anisotropic if and only if its associated graded
hermitian form $\tilde h$ is anisotropic.  This is proved analogously to the
ungraded case \cite[\S6.A]{BoI},
using the fact that $\tilde h$ is anisotropic if and only if
$\tilde h(\tilde x, \tilde x) \ne \tilde 0$ for all nonzero $x\in V$,
as remarked in \cite[p.~101]{RTW}.
\end{proof}

\section{Henselian valuations}
\label{sec:Hensel}

Throughout this section, $(F,v)$ is a Henselian valued field and $A$
is a finite-dimensional simple $F$-algebra with an involution
$\sigma$. We let $K=Z(A)$ and assume $F$ is the subfield of $K$ fixed
by $\sigma$. (Thus, $A$ is central over $F$ if
$\sigma\rvert_{Z(A)}=\id_{Z(A)}$). We assume $A$ is tame over $F$,
which means that $A$ is split by the maximal tamely ramified extension
of $K$, and that $K$ is tame over $F$. Moreover, if
$\charac(\overline{F})=2$ we assume $\sigma$ is not an orthogonal
involution.

\begin{proposition}
\label{existgauges.prop}  With the hypotheses above, every
$v$-gauge on $A$ is tame.
Furthermore, there exist $v$-gauges on $A$ that are invariant under $\sigma$.
\end{proposition}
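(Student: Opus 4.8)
The plan is to split according to the kind of $\sigma$, and in each case to realize $A$ as $\End_D(V)$ for a suitable division algebra $D$ over $K$ carrying a $\tau$-invariant valuation $w$, then to produce the desired gauge as $\End(\alpha)$ for a norm $\alpha$ on $V$ compatible with an appropriate hermitian form, invoking Prop.~\ref{compat.prop} and Prop.~\ref{adjoint.prop}. First, the tameness claim: since $A$ is assumed tame over $F$ in the sense stated (split by the maximal tamely ramified extension of $K$, with $K/F$ tame), any $v$-gauge $g$ on $A$ has $\gr_g(A)$ with center a tamely ramified graded extension of $\gr(F)$; concretely, $g$ restricts to the valuation $v_K$ on $K$ (uniqueness of gauge restriction to the center, as $v$ is Henselian), so $Z(\gr_g(A)) = \gr_{v_K}(K) = \gr_g(Z(A))$ is separable over $\gr(F)$ by the tameness of $K/F$, which is exactly the definition of a tame gauge. (If $\charac(\overline F)=0$ this is automatic by \cite[Cor.~3.6]{TWgr}.)

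For the existence of a $\sigma$-invariant gauge, I would argue as follows. Since $v$ is Henselian, it extends uniquely to a valuation $v_K$ on $K$, and then uniquely to a valuation $w$ on the division algebra $D$ underlying $A$ (Schilling/Ershov--Wadsworth, cited in the introduction); by uniqueness of the extension, $w$ is automatically invariant under any involution of $D$, in particular under the involution $\tau$ on $D$ obtained by adjusting $\sigma$ on $A=\End_D(V)$ (in the first-kind case one can take $\tau$ of the same kind as $\sigma$; in the unitary case $\tau$ is unitary on $D$ with $D^\tau$-center $F$, handled as in \cite{BoI}, Ch.~I). Because $F$ is Henselian and $A$ is tame over $F$, the valuation $w$ on $D$ is defectless over $v$, so $\End(\alpha)$ is a $v$-gauge for every $w$-norm $\alpha$ on $V$, by \cite[Prop.~1.19]{TWgr}. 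It remains to choose $\alpha$ so that $\End(\alpha)$ is invariant under $\ad_h = \sigma$, and by Prop.~\ref{compat.prop}(a)$\Leftrightarrow$(d) this amounts to choosing $\alpha$ compatible (up to an additive constant) with the hermitian or ($\pm$)hermitian form $h$ on $V$ for which $\sigma = \ad_h$. Such a compatible norm exists: one diagonalizes $h$ with respect to an orthogonal $D$-basis $(e_i)$ — possible in all the allowed cases, using that we exclude orthogonal involutions when $\charac(\overline F)=2$ — and defines $\alpha(\sum e_i d_i) = \min_i\bigl(\tfrac12 w(h(e_i,e_i)) + w(d_i)\bigr)$; a direct check from the definition \eqref{dualnormdef} of the dual norm shows $\alpha^\sharp = \alpha$, i.e. $\alpha$ is compatible with $h$. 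Then $g := \End(\alpha)$ is a $v$-gauge invariant under $\sigma = \ad_h$.

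The main obstacle I anticipate is the case analysis needed to guarantee a $\sigma$-adapted presentation $A \cong \End_D(V)$ together with an orthogonalizable $h$. For symplectic $\sigma$ in residue characteristic $2$ one cannot orthogonalize an alternating form over a field, but over the division algebra $D$ (which is then a quaternion algebra with its canonical symplectic involution as $\tau$) one can still diagonalize the hermitian form $h$, so the construction goes through; this is precisely why orthogonal involutions in characteristic~$2$ are excluded. The unitary case requires knowing that $v_K$ on $K$ is preserved by $\sigma\rvert_K$ — automatic by uniqueness of the Henselian extension — and that $h$ can be taken hermitian with respect to $\tau$ and diagonalized, which holds by the classical theory since $\charac(\overline F)\neq 2$ is not needed here once $D$ carries a unitary involution. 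Finally, once $g$ is built, its $\sigma$-invariance is exactly statement (a) of Prop.~\ref{compat.prop}, and (for later use) Prop.~\ref{adjoint.prop} identifies $\tilde\sigma = \ad_{\tilde h}$, though that refinement is not needed for the present proposition.
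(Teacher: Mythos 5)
Your overall strategy for the existence part (realize $\sigma=\ad_h$, pick a norm $\alpha$ compatible with $h$, take $g=\End(\alpha)$ and invoke Prop.~\ref{compat.prop}) is the same as the paper's, but there are two genuine gaps. First, the tameness argument is circular: from ``$g$ restricts to $v_K$ on $K$'' you conclude $Z\bigl(\gr_g(A)\bigr)=\gr_{v_K}(K)$, but the equality $Z\bigl(\gr_g(A)\bigr)=\gr_g\bigl(Z(A)\bigr)$ is itself one of the two defining conditions of a tame gauge and is exactly the nontrivial point; a priori the graded center can be strictly larger than $\gr_g(Z(A))$. The paper gets around this by showing that the unique valuation $w$ on $D$ is a \emph{tame} $v$-gauge (via \cite[Prop.~1.12, 1.13, 1.19]{TWgr}, using that $A$ tame forces $D$ tame) and then appealing to \cite[Th.~3.1]{TWgr}, which identifies every $v$-gauge on $A=\End_D(V)$ with an $\End(\beta)$ built from $w$, whence tameness of all of them. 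Some such input is needed; the separability of $\gr(K)$ over $\gr(F)$ alone does not give the center equality.

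Second, your case analysis for existence mishandles the split symplectic case. If $A$ is split and $\sigma$ is symplectic, then $D=K$ is a field (not a quaternion algebra, contrary to what you assert) and $\sigma=\ad_b$ for an \emph{alternating} bilinear form $b$, which cannot be diagonalized in any characteristic; so your construction of a compatible norm by diagonalizing simply does not apply there, and this has nothing to do with residue characteristic $2$ (you also conflate this with the exclusion of orthogonal involutions in residue characteristic $2$, which is made for different reasons). The paper treats this case separately and explicitly: choose a symplectic base $(e_i,f_i)$ of $V$ for $b$, give all base vectors value $0$, and check by hand that $\End(\alpha)$, computed entrywise on matrices, is unchanged under $\sigma$ because $\sigma$ merely permutes matrix entries up to sign. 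In the remaining cases your diagonalization argument is essentially a special case of \cite[Cor.~3.6]{RTW} (and is fine when a diagonalization exists), but note that when $\charac(F)=2$ the paper works with \emph{even} hermitian forms and cites \cite[Cor.~3.6]{RTW} precisely to avoid relying on diagonalizability; your ``direct check'' should either be restricted to the diagonalizable situations or replaced by that citation.
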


\begin{proof}
We may represent $A=\End_DV$ for some finite-dimensional right vector
space $V$ over a central division $K$-algebra $D$. Since $v$ is
Henselian, it extends uniquely to a valuation $w$ on $D$ (see for instance
\cite[p.~53, Th.~9]{Sch} or \cite[Th.]{W}). Since $A$ is tame
over $F$, by \cite[Prop.~1.19]{TWgr}
    $D$ must also be
tame over~$F$; hence by \cite[Prop.~1.12 and 1.13]{TWgr} $w$ is a
tame $v$-gauge. Therefore, every $v$-gauge on $A$ is tame, by
\cite[Th.~3.1]{TWgr}.

If $A$ is split and $\sigma$ is symplectic, then
$\sigma=\ad_b$ for
some alternating bilinear form $b$ on $V$,
see \cite[\S4.A]{BoI}. Choose a symplectic base
$\mathcal B = (e_i,f_i)_{i=1}^n$ of $V$ for $b$ and define a $v$-norm
$\alpha$ on
$V$ by
\[
\alpha\bigl(
\tsum_{i=1}^ne_i\lambda_i+f_i\mu_i\bigr) \ = \
\min\limits_{1\leq i\leq n}\bigl(v(\lambda_i),v(\mu_i)\bigr)
\qquad\text{for $\lambda_1$, \ldots, $\mu_n\in F$},
\]
i.e., $\mathcal B$ is a splitting base for $\alpha$
on $V$, and each $\alpha(e_i) = \alpha(f_j) = 0$.
The $v$-norm $\alpha$ on $V$ induces the $v$-gauge
$\End(\alpha)$ on $\End_D(V)$.
For $g\in \End_D(V)$, if $g$ has matrix $(c_{ij})$
relative to $\mathcal B$, then ${\End(\alpha)(g) =
\min_{1\le i,j\le n}v(c_{ij})}$. The matrix for $\sigma(g)$
has the same set of entries up to sign as $(c_{ij})$, though the
entries are relocated.  Hence, $\End(\alpha)$ is invariant
under $\sigma$. We exclude this case of $A$ split
and $\sigma$ symplectic for the rest of the proof. We may
then choose an $F$-linear involution $\theta$ on $D$ of the same type
as $\sigma$ and an even hermitian form $h$ on $V$ with respect to
$\theta$ such that $\sigma=\ad_h$, see \cite[(4.2)]{BoI}. By
\cite[Cor.~3.6]{RTW}, there
exists a $w$-norm $\alpha$ on $V$ that is compatible with $h$. By
\cite[Prop.~1.19]{TWgr}, $\End(\alpha)$ is a $v$-gauge; by
Prop.~\ref{compat.prop}, this gauge is invariant under $\sigma$.
\end{proof}

\begin{theorem}
\label{mainHensel.thm} With the hypotheses of this section,
if $\sigma$ is anisotropic, then  for the Henselian valuation
~$v$
on $F$
there is a unique $\sigma$-special value function $\varphi$ on
$A$ for $v$.
  This~$\varphi$ is a tame $v$-gauge
and its value set $\Gamma_A$ lies in the divisible hull of
$\Gamma_F$. It is the unique $v$-gauge on $A$  invariant under
$\sigma$.
\end{theorem}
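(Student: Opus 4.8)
The plan is to separate the statement into an existence part and a uniqueness part, and to get both from the machinery already assembled. For existence, Prop.~\ref{existgauges.prop} already produces a $v$-gauge $\varphi$ on $A$ invariant under $\sigma$, and Prop.~\ref{existgauges.prop} also tells us it is automatically tame. So the real content of the existence half is to check that this $\varphi$ is $\sigma$-special, i.e.\ that it satisfies condition (b) of Prop.~\ref{eqcond.prop}: invariance under $\sigma$ we have, so we need $\tilde\sigma$ anisotropic on $\gr_\varphi(A)$. This is where the hypothesis that $\sigma$ is anisotropic must be fed in. The natural route is the construction in the proof of Prop.~\ref{existgauges.prop}: write $A=\End_DV$ with $\sigma=\ad_h$ for a hermitian (or, in the excluded-then-handled split symplectic case, alternating) form $h$, choose $\alpha$ compatible with $h$, and apply Prop.~\ref{adjoint.prop}, which says $\tilde\ad_h=\ad_{\tilde h}$ and hence that $\End(\alpha)$ is $\sigma$-special iff the residue form $\tilde h$ is anisotropic. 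Thus I would reduce the claim to: $h$ anisotropic $\Rightarrow$ $\tilde h$ anisotropic. This is precisely a Springer-type statement over the Henselian base, and the paper flags that it proves such a statement (Cor.~\ref{anisot.cor}); so here I would invoke that Springer-type result (or its direct hermitian-form proof over the division algebra $D$ with its unique Henselian valuation $w$), noting that anisotropy of $\sigma=\ad_h$ is equivalent to anisotropy of $h$, and anisotropy of $\tilde\sigma=\ad_{\tilde h}$ to anisotropy of $\tilde h$. For the split symplectic case one checks directly from the explicit $\End(\alpha)$ in Prop.~\ref{existgauges.prop} that $\tilde\sigma$ is again anisotropic (an alternating form is never anisotropic unless $V=0$, so in fact that case cannot occur under the hypothesis that $\sigma$ is anisotropic, which immediately forces $\dim V\le 1$ and $A=F$).

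For uniqueness, suppose $\varphi$ and $\varphi'$ are two $\sigma$-special $v$-value functions on $A$. The key fact is that a $\sigma$-special value function is in particular a $v$-gauge: indeed condition (a) of Prop.~\ref{eqcond.prop} together with surmultiplicativity forces the associated graded ring to have no nonzero homogeneous nil one-sided ideal (the last clause of Prop.~\ref{eqcond.prop}), which gives graded semisimplicity; and the norm property must be extracted — this is the point where I expect to lean on the Henselian hypothesis, since over a Henselian base the value function on the division algebra $D$ underlying $A$ is forced (uniqueness of the extension $w$), and from $\sigma$-speciality one recovers that $\varphi$ arises as $\End(\alpha)$ for a norm $\alpha$ on $V$, hence is a norm. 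Once both $\varphi$ and $\varphi'$ are known to be tame $v$-gauges invariant under $\sigma$, uniqueness follows from the rigidity of tame gauges over a Henselian field: any two tame $v$-gauges on a tame simple algebra differ in a controlled way, and $\sigma$-invariance plus the relation $\varphi(\sigma(x)x)=2\varphi(x)$ pins down the value set $\Gamma_A\subseteq\Gamma_F\otimes\Q$ and then pins down $\varphi$ itself on a splitting base, hence everywhere. Concretely, I would compare $\varphi$ and $\varphi'$ on the (unique, Henselian) valued division algebra $D$ first — where they must agree with $w$ — and then propagate the equality to $\End_DV$ via the $\End(\alpha)$ description, using that compatible norms are unique up to an additive constant (Prop.~\ref{compat.prop}(d)) and that the normalization is fixed by $\varphi(1)=0$.

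The statement that $\Gamma_A$ lies in the divisible hull of $\Gamma_F$ then comes for free: $\varphi=\End(\alpha)$ takes values in $\Gamma_D-\Gamma_D+(\text{shifts of }\alpha)$, and over a Henselian tame base $\Gamma_D$ lies in the divisible hull of $\Gamma_F$ (tameness), while a norm can be chosen with values in $\Gamma_D$; the additive constants are absorbed by the normalization $\varphi(1)=0$. Finally, the last sentence — that $\varphi$ is the \emph{unique} $\sigma$-invariant $v$-gauge — requires showing that \emph{every} $\sigma$-invariant $v$-gauge is automatically $\sigma$-special; equivalently, by Prop.~\ref{eqcond.prop}, that for a $\sigma$-invariant $v$-gauge the induced $\tilde\sigma$ is anisotropic whenever $\sigma$ is. This is again a Springer/descent statement: $\tilde\sigma$ on $\gr(A)$ over the graded field $\gr(F)$, or equivalently $\sigma_0$ on the residue algebra $A_0$, cannot be isotropic if $\sigma$ is anisotropic and the base is Henselian, because an isotropic homogeneous element would lift (Hensel's lemma, in the hermitian-form incarnation of \cite[Cor.~3.6]{RTW} run backwards) to an isotropic vector for $h$ over $D$. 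So the main obstacle is this lifting/descent step for anisotropy — the Springer-type analogue — and I would organize the proof so that all three assertions (existence of a special gauge, its uniqueness, and the coincidence with the unique $\sigma$-invariant gauge) funnel through that single point.
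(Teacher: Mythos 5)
There is a genuine gap in your uniqueness step. The theorem asserts uniqueness among $\sigma$-special \emph{value functions}, and your plan is to first upgrade any such $\varphi$ to a $v$-gauge and then invoke a ``rigidity of tame gauges over a Henselian field.'' Neither half of this works as stated. The nil-ideal clause of Prop.~\ref{eqcond.prop} does give graded semisimplicity, but the norm condition is exactly the hard part, and your route to it --- ``from $\sigma$-speciality one recovers that $\varphi$ arises as $\End(\alpha)$'' --- is circular: the representation of a value function as $\End(\beta)$ for a norm $\beta$ (\cite[Th.~3.1]{TWgr}) is a theorem about gauges, so it cannot be used to prove $\varphi$ is one. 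Moreover there is no general rigidity of (tame) gauges over a Henselian field: conjugating a gauge by a unit usually produces a different gauge, so uniqueness must come from $\sigma$-invariance, and the actual mechanism is the uniqueness of the norm compatible with the anisotropic form $h$ (\cite[Prop.~4.2]{RTW}) --- note also that compatible norms are \emph{not} ``unique up to an additive constant'' (a shift of a compatible norm is no longer compatible; rather, shifts do not change $\End(\alpha)$ at all, so your appeal to $\varphi(1)=0$ does no normalizing work). The paper avoids all of this by a different argument that never needs to know a $\sigma$-special value function is a norm: it proves uniqueness directly by induction on the matrix size of $A$, using a symmetric idempotent $e$, the induction hypothesis on $eAe$ and $(1-e)A(1-e)$, the identity $\varphi(x)=\frac12\varphi(\sigma(x)x)$, and the orthogonality property \eqref{sumorth.eq} of Prop.~\ref{eqcond.prop}; the base case (division algebras) uses that on $\sigma$-fixed subfields a $\sigma$-special value function restricts to the unique extension of the Henselian $v$. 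Some such idea is needed and is missing from your proposal.

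Two smaller points. For existence, invoking Cor.~\ref{anisot.cor} to get ``$h$ anisotropic $\Rightarrow\tilde h$ anisotropic'' risks circularity, since the implication (a)$\Rightarrow$(b) of that corollary is itself deduced from Th.~\ref{mainHensel.thm}; your alternative of arguing directly with hermitian forms over $(D,w)$ is the right one, and is what the paper does by taking the specific norm $\alpha(x)=\frac12 w\bigl(h(x,x)\bigr)$, whose compatibility with $h$ and anisotropic residue form are supplied by \cite[Th.~4.6, Prop.~4.2]{RTW}, after which Prop.~\ref{compat.prop} and Prop.~\ref{adjoint.prop} give that $\End(\alpha)$ is $\sigma$-special. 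For the final assertion (uniqueness of the $\sigma$-invariant gauge), your sketch is essentially the paper's, but it hinges on the same cited uniqueness of the compatible norm rather than on any general uniqueness of invariant gauges, and it does not require showing that every $\sigma$-invariant gauge is $\sigma$-special.
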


\begin{proof}
       We use the same notation as in the proof of
       Prop.~\ref{existgauges.prop}, representing $A=\End_{D}(V)$ as in
       that proof. Since $\sigma$ is anisotropic, it is
       not a symplectic involution on a split algebra.
       Therefore, it is the adjoint
       involution of some even Hermitian form $h$ on $V$ with respect to
       an involution $\theta$ on $D$ of the same type as $\sigma$,
       see \cite[Th.~(4.2)]{BoI}.  The form $h$ is
       anisotropic since $\sigma$ is anisotropic. By
       \cite[Th.~4.6 and Prop.~4.2]{RTW}, the map $\alpha\colon
       V\to\frac12\Gamma_D\cup\{\infty\}$ defined by
       \begin{equation}
       \label{alpha.eq}
       \alpha(x) \, = \, {\textstyle\frac12}w\bigl(h(x,x)\bigr)
       \end{equation}
       is a $w$-norm on $V$ that is compatible with $h$, and the residue form
       $\tilde h$ is anisotropic. Prop.~\ref{compat.prop} then
       shows that $\varphi = \End(\alpha)$ is a surmultiplicative $v$-value
       function on $A$ that is invariant under
       $\sigma$, and Prop.~\ref{adjoint.prop} shows that
$\varphi$ Is $\sigma$-special.
      Since $A$ is
       tame over $F$, the valuation $w$ is a $v$-gauge on $D$ by
       \cite[Prop.~1.13]{TWgr}, hence $\varphi$ is a tame $v$-gauge
       by \cite[Prop.~1.19]{TWgr}. Its value set obviously lies in the
       divisible hull of $\Gamma_D$, which is also the divisible hull of
       $\Gamma_F$.

       To prove uniqueness, suppose $\varphi_{1}$ and $\varphi_{2}$ are each
       $\sigma$-special value functions on $A$ for $v$.
To show that
       $\varphi_{1}=\varphi_{2}$, we argue by induction on the matrix
       size $\ms(A)$, which is defined as the dimension of $V$ in the
       representation $A=\End_D(V)$.

       Suppose first that $A$ is a division algebra. For any subfield
       $L\subseteq A$
       fixed elementwise under $\sigma$ we have
       \[
       \varphi_i(x^2) \, = \, \varphi_i(\sigma(x)x) \, = \, 2\varphi_i(x)
       \qquad\text{for all $x\in L$ and $i=1$, $2$},
       \]
       hence for nonzero
$\xi = \tilde x^{\varphi_i} \in \grof {\varphi_i}L$
we have $\xi^2 = \tilde{x^2}^{\,\varphi_i} \ne 0$.
Therefore,
$\gr_{\varphi_i}(L)$ is semisimple. By
       \cite[Prop.~1.8]{TWgr}, it follows that $\varphi_1$ and
       $\varphi_2$ coincide with the unique valuation on $L$ extending
       $v$. (The extension of $v$ to $L$ is unique because $(F,v)$ is
       Henselian.)
       For any $x\in A$, the product $\sigma(x)x$ lies in a
       subfield of $A$ fixed under $\sigma$, so
       $\varphi_{1}(\sigma(x)x)=\varphi_{2}(\sigma(x)x)$.
       Therefore,
       \[
       \varphi_{1}(x) \, = \, {\textstyle\frac12} 
\varphi_{1}(\sigma(x)x) \, = \,
       {\textstyle\frac12} \varphi_{2}(\sigma(x)x) \, = \,
       \varphi_{2}(x).
       \]
       The claim is thus proved if $\ms(A)=1$.

       Suppose next that $\ms(A)>1$. We may then find in $A$ a symmetric
       idempotent $e\neq0$, $1$. (Representing $A=\End_{D}(V)$ as above,
       we have $\dim_{D}V>1$ and we may take for $e$ the orthogonal
       projection onto any nonzero proper subspace of $V$.) Let $f=1-e$.
       The involution $\sigma$ restricts to $eAe$ and $fAf$, and
       $\ms(eAe)$, $\ms(fAf)<\ms(A)$. By the induction hypothesis, the
       restrictions of $\varphi_{1}$ and $\varphi_{2}$ coincide on $eAe$
       and $fAf$.
       For any $x\in A$, we have $\sigma(xe)xe\in eAe$ and
       $\sigma(xf)xf\in fAf$, hence
       \[
       \varphi_{1}(\sigma(xe)xe) \, = \, \varphi_{2}(\sigma(xe)xe)
       \quad\text{and}\quad
       \varphi_{1}(\sigma(xf)xf) \, = \, \varphi_{2}(\sigma(xf)xf).
       \]
       Since $\varphi_{1}$ and $\varphi_{2}$ are $\sigma$-special 
value functions, Prop.~\ref{eqcond.prop} shows that
       \begin{equation}
	\label{j1j2.eq}
	\varphi_{1}(xe) \, = \, \varphi_{2}(xe)
	\quad\text{and}
	\quad
	\varphi_{1}(xf) \, = \, \varphi_{2}(xf).
       \end{equation}
       On the other hand, we have $xe\sigma(xf)=0$ and $xe+xf=x$, hence
       Prop.~\ref{eqcond.prop} also yields
       \[
       \varphi_{1}(x) \, = \, \min\bigl(\varphi_{1}(xe),\varphi_{1}(xf)\bigr)
       \quad\text{and}\quad
       \varphi_{2}(x) \, = \, \min\bigl(\varphi_{2}(xe),\varphi_{2}(xf)\bigr).
       \]
       By \eqref{j1j2.eq}, it follows that
       $\varphi_{1}(x)=\varphi_{2}(x)$.

       Now, suppose $g$ is a gauge on $A$ that is invariant under
       $\sigma$. By \cite[Th.~3.1]{TWgr} we may find a $w$-norm $\beta$ on
       $V$ such that $g=\End(\beta)$. Up to the addition of a constant,
       we may assume $\beta$ is compatible with~$h$ in view of
       Prop.~\ref{compat.prop}. But the norm $\alpha$ of \eqref{alpha.eq}
       is the only $w$-norm on $V$ that is compatible with $h$ by
       \cite[Prop.~4.2]{RTW}, so $\beta=\alpha$ and $g=\End(\alpha)$.
\end{proof}

If $g$ is a $v$-gauge on $A$ that is invariant under $\sigma$, we
denote by $\sigma_0$ the $0$-component of $\tilde\sigma$. Thus,
$\sigma_0$ is an involution on the $\overline{F}$-algebra
$A_0=A^{\geq0}/A^{>0}$, which may be viewed as the residue algebra of
$A$. The algebra $A_0$ is semisimple, but not necessarily simple, see
\cite[\S2]{TWgr}.
Note that if $A_0 = B_1 \times\ldots\times B_k$ with the $B_i$ simple,
then an involution $\tau$ on $A_0$ is anisotropic if and only if 
$\tau(B_i) = B_i$ and $\tau|_{B_i}$ is anisotropic for $1\le i\le k$.


\begin{corollary}
\label{anisot.cor}
With the hypotheses of this section, if
$g$ is a $v$-gauge on $A$ that is invariant under $\sigma$, the
following conditions are equivalent:
\begin{enumerate}
\item[(a)]
$\sigma$ is anisotropic;
\item[(b)]
$\tilde\sigma$ is anisotropic;
\item[(c)]
$\sigma_0$ is anisotropic.
\end{enumerate}
\end{corollary}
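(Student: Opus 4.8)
The plan is to prove the cycle (a)$\Rightarrow$(b)$\Rightarrow$(c)$\Rightarrow$(a), invoking the tools already established in this section. The implication (a)$\Rightarrow$(b) is the substantive one. Since $v$ is Henselian and $\sigma$ is anisotropic, Theorem~\ref{mainHensel.thm} applies: there is a unique $\sigma$-special value function $\varphi$ on $A$ for $v$, and it is the unique $v$-gauge on $A$ invariant under $\sigma$. Hence the given $g$, being a $v$-gauge invariant under $\sigma$, must equal $\varphi$; thus $g$ is $\sigma$-special. By Prop.~\ref{eqcond.prop}, condition (b) of that proposition holds for $g$, i.e.\ $\tilde\sigma$ is anisotropic.

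For (b)$\Rightarrow$(c): if $\tilde\sigma$ is anisotropic on $\gr_g(A)$, then in particular $\tilde\sigma$ restricted to the degree-$0$ part $A_0$ is anisotropic, because $A_0$ is a graded subalgebra (the degree-$0$ component) of $\gr_g(A)$ and $\tilde\sigma$ preserves degrees, so $\sigma_0 = \tilde\sigma\rvert_{A_0}$ satisfies $\sigma_0(a)a = 0 \Rightarrow a = 0$ for $a \in A_0$ as a special case of the anisotropy of $\tilde\sigma$. For (c)$\Rightarrow$(a): suppose $\sigma_0$ is anisotropic and, for contradiction, that $\sigma$ is isotropic, say $\sigma(x)x = 0$ with $x \neq 0$. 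One wants to descend this relation to the residue algebra. Scaling $x$ by an element of $F^\times$ (which commutes with $\sigma$ and changes $\varphi(x)$ by a value in $\Gamma_F$), and using that $\Gamma_A$ lies in the divisible hull of $\Gamma_F$ while $\Gamma_A/\Gamma_F$ is finite, one reduces to analyzing $\tilde\sigma(\tilde x)\tilde x$ in $\gr_g(A)$; if this were nonzero it would be a nonzero symmetric homogeneous element, and a standard argument (as in the last paragraph of the proof of Prop.~\ref{eqcond.prop}, or by passing to a suitable power) would force an isotropic element for $\sigma_0$ after translating the degree back to $0$. The cleanest route is actually to show directly that (c) forces $\tilde\sigma$ anisotropic hence $g$ is $\sigma$-special hence (by Prop.~\ref{eqcond.prop}) $\sigma$ is anisotropic; for this one uses that $\gr_g(A)$ is graded semisimple (as $g$ is a gauge) together with the structure of graded simple algebras over a graded field, where anisotropy of an involution is detected on the degree-$0$ component of each simple component.

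The main obstacle is the implication (c)$\Rightarrow$(b) (equivalently (c)$\Rightarrow$(a)): one must show that anisotropy of the residue involution $\sigma_0$ propagates to the whole graded algebra. Here I would use the description from Prop.~\ref{type.prop} of the structure of $\gr_g(A)$ as a graded central simple (or, in the unitary split case, a product of two) $\gr(F)$-algebra with involution $\tilde\sigma$, reduce via Prop.~\ref{adjoint.prop} to the corresponding statement about anisotropy of a graded hermitian form $\tilde h$, and then invoke the graded analogue of Springer's theorem for hermitian forms over graded division algebras — namely that a graded hermitian form over $\gr_w(D)$ is anisotropic if and only if its degree-$0$ residue form is anisotropic, which follows from the fact that $\gr_w(D)$ is a graded division ring and homogeneous elements have values in a group with $\Gamma_D/\Gamma_F$ finite, so a hypothetical isotropic homogeneous vector can be rescaled into degree $0$. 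Combining this with the already-established equivalence "$g$ is $\sigma$-special $\iff$ $\tilde h$ anisotropic" (Prop.~\ref{adjoint.prop}) and "$g$ is $\sigma$-special $\iff$ $\tilde\sigma$ anisotropic" (Prop.~\ref{eqcond.prop}) closes the loop: (c) gives $\tilde h$ (hence $\tilde\sigma$) anisotropic, which is (b), and (b)$\Rightarrow$(a) is the trivial implication noted before Prop.~\ref{eqcond.prop}.
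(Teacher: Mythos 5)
Your implications (a)$\Rightarrow$(b) (via the uniqueness statement of Th.~\ref{mainHensel.thm}, forcing $g$ to be the $\sigma$-special gauge) and (b)$\Rightarrow$(c), (b)$\Rightarrow$(a) are correct and are exactly what the paper does. The problem is the converse direction (c)$\Rightarrow$(b), which you never actually prove: both of your sketches hinge on moving a homogeneous isotropic element into degree $0$, and the device you propose for this does not work. Scaling an isotropic vector (or element of $\gr_g(A)$) by elements of $F^\times$, or by homogeneous elements of $\gr(F)$ or $\gr(D)$, shifts its degree only by elements of $\Gamma_F$ (resp.\ $\Gamma_D$); a homogeneous isotropic element typically has degree in a nontrivial coset of $\Gamma_D$, and no rescaling brings it to degree $0$. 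Indeed, the nontriviality of $\Gamma_V/\Gamma_D$ (equivalently, of $\Gamma_A/\Gamma_F$ up to the relevant index) is precisely why $A_0$ is only semisimple and why the residue data consists of \emph{several} residue forms; your appeal to ``the degree-$0$ residue form'' of $\tilde h$, and to finiteness of $\Gamma_D/\Gamma_F$ or divisibility of $\Gamma$, does not close this gap. Likewise ``passing to a suitable power'' and the nil-ideal argument at the end of Prop.~\ref{eqcond.prop} address a different issue (nilpotence), not the degree-shifting problem, and the sentence ``anisotropy of an involution is detected on the degree-$0$ component of each simple component'' is a restatement of the very implication to be proved.

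What is missing is a structural fact supplying a degree-$0$ isotropic element from a homogeneous one. The paper's proof does this without hermitian forms (and without Henselianity, as noted in Remark~\ref{anisot.rem}(1)): if $\xi\in\gr_g(A)$ is homogeneous, nonzero, with $\tilde\sigma(\xi)\xi=0$, then every $\eta$ in the right ideal $\xi\,\gr_g(A)$ satisfies $\tilde\sigma(\eta)\eta=0$; by Lemma~\ref{ideal.lem}, a nonzero homogeneous right ideal of a graded simple algebra is of the form $e\,\gr_g(A)$ for a nonzero idempotent $e$ of degree $0$ (identify $\gr_g(A)$ with $\grEnd_{\grD}(\grV)$ and take a graded projection onto the image subspace), so $\bigl(\xi\,\gr_g(A)\bigr)\cap A_0\neq\{0\}$ and $\sigma_0$ is isotropic. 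If you prefer your hermitian-form route, you must replace the single ``degree-$0$ residue form'' by the full family of residue forms indexed by the cosets of $\Gamma_V$ modulo $\Gamma_D$ (these correspond to the simple components of $A_0$, as in Larmour's theorem cited in Remark~\ref{anisot.rem}(2)), and show that a homogeneous isotropic vector of degree $\delta$ produces an isotropic vector for the residue form attached to the coset $\delta+\Gamma_D$; as written, your argument fails at exactly this point.
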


\begin{proof}
The implication (a)~$\Rightarrow$~(b) readily follows from
Th.~\ref{mainHensel.thm}, and the implications (b)~$\Rightarrow$~(a)
and (b)~$\Rightarrow$~(c) are clear. To prove (c)~$\Rightarrow$~(b),
suppose $\xi\in\gr_g(A)$ is a nonzero homogeneous element such that
$\tilde\sigma(\xi)\xi=0$. Every element
$\eta\in\bigl(\xi\gr_g(A)\bigr)\cap A_0$ satisfies
$\tilde\sigma(\eta)\eta=\sigma_0(\eta)\eta=0$. Therefore, $\sigma_0$
is isotropic if $\bigl(\xi\gr_g(A)\bigr)\cap A_0\neq\{0\}$. The
corollary thus follows from the following general result:
\renewcommand{\qed}{\relax}
\end{proof}

\begin{lemma}
       \label{ideal.lem}
       Let $\grA$ be a graded simple algebra
finite-dimensional over a graded field
       $\grK$, and let $\grI\subseteq \grA$ be a homogeneous right  ideal.
       Then, there is a homogeneous idempotent $e\in \grA$ of degree $0$
such that $\grI = e \grA$.
\end{lemma}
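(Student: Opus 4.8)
The plan is to show that a homogeneous right ideal $\grI$ in a finite-dimensional graded simple algebra $\grA$ over a graded field $\grK$ is generated by a homogeneous idempotent of degree~$0$, mimicking the classical (ungraded) proof that right ideals in a semisimple ring are generated by idempotents, while keeping careful track of gradings. First I would invoke graded semisimplicity: since $\grA$ is graded simple and finite-dimensional over $\grK$, it is graded semisimple, so the homogeneous right ideal $\grI$ is a direct summand as a graded right $\grA$-module; that is, there is a homogeneous right ideal $\grI'$ with $\grA = \grI \oplus \grI'$ as graded modules. Writing $1 = e + e'$ with $e \in \grI$ and $e' \in \grI'$ the degree-$0$ homogeneous components (the components in other degrees must vanish since $1$ is homogeneous of degree~$0$ and the decomposition is graded), one checks in the usual way that $e$ is an idempotent: from $e = e\cdot 1 = e^2 + ee'$ with $e^2 \in \grI$ and $ee' \in \grI'$, the directness of the sum forces $ee' = 0$ and $e = e^2$. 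Then $e\grA \subseteq \grI$, and conversely for $x \in \grI$ we have $x = ex + e'x$ with $ex \in \grI$, $e'x \in \grI'$, and also $x \in \grI$, so $e'x = 0$ and $x = ex \in e\grA$; hence $\grI = e\grA$. Finally $e$ is homogeneous of degree~$0$ by construction.

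The one point that needs care — and the step I expect to be the main obstacle — is justifying that a finite-dimensional graded simple algebra over a graded field is graded semisimple in the appropriate module-theoretic sense, i.e.\ that homogeneous submodules of finitely generated graded modules are direct summands. This is the graded analogue of Wedderburn–Artin/Maschke-type semisimplicity. Rather than redevelop this theory, I would cite the structure theory of graded simple algebras already used elsewhere in the paper (e.g.\ the graded Wedderburn theory underlying \cite{TWgr}): a graded simple $\grK$-algebra finite-dimensional over the graded field $\grK$ is of the form $\grEnd_{\grD}(\grV)$ for a graded division algebra $\grD$ and a finite-dimensional graded right $\grD$-vector space $\grV$, and for such algebras every graded right module is a direct sum of shifts of the simple graded module, giving the splitting of $\grI$. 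Alternatively, one argues directly: among homogeneous right ideals $\grI'$ with $\grI \cap \grI' = 0$ choose one maximal by dimension count over $\grK$; if $\grI \oplus \grI' \neq \grA$, graded simplicity and a standard argument produce a homogeneous element outside $\grI \oplus \grI'$ that can be used to enlarge $\grI'$, a contradiction. Either route reduces the lemma to bookkeeping with homogeneous components, with no serious new difficulty beyond invoking the graded semisimplicity that the paper already relies on.
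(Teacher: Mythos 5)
Your argument is correct in substance, but it takes a genuinely different route from the paper's. The paper works concretely: it invokes the graded Wedderburn theorem \cite[Prop.~1.3]{HW} to write $\grA=\grEnd_\grD(\grV)$, sets $\grW=\sum\im(f)$ over homogeneous $f\in\grI$, identifies $\grI=\grHom_\grD(\grV,\grW)$ as in the ungraded case, and takes $e$ to be the projection of $\grV$ onto $\grW$ along a graded complement $\grY$; the only semisimplicity-type input needed is that graded subspaces of a graded $\grD$-vector space have graded complements, which is immediate since such spaces have homogeneous bases. You instead argue abstractly: graded semisimplicity of $\grA$ as a graded right module over itself gives a homogeneous right ideal $\grI'$ with $\grA=\grI\oplus\grI'$, and then the classical splitting of $1$ produces the idempotent. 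This buys a shorter formal argument once the module-theoretic input is in hand, but it requires more than the paper uses — namely that every homogeneous right ideal is a graded direct summand — which you rightly flag as the real content; both of your suggested justifications (the $\grEnd_\grD(\grV)$ picture with graded modules decomposing into shifts of the simple one, or the maximal-complement argument, which needs the observation that the sum of all graded-minimal homogeneous right ideals is a nonzero homogeneous two-sided ideal, hence all of $\grA$) do work, and the first is essentially the same structure theorem the paper cites, just pushed one level further into module theory.

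One small correction to your idempotency step: with $\grI$, $\grI'$ \emph{right} ideals, the product $ee'$ need not lie in $\grI'$, so the computation $e=e\cdot 1=e^2+ee'$ does not directly give the conclusion; you should instead expand $e=1\cdot e=e^2+e'e$, where $e^2\in\grI$ and $e'e\in\grI'$ because each factor on the left lies in the respective right ideal. Equivalently, your own later computation $x=ex+e'x$ for $x\in\grI$ (which is correctly justified) applied to $x=e$ already yields $e=e^2$. This is a one-line fix, not a gap.
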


\begin{proof}
        By \cite[Prop.~1.3]{HW}, we may identify $\grA=\grEnd_{\grD}(\grV)$
       for some graded  division $\grK$-algebra $\grD$ and some
finite-dimensional graded $\grD$-vector space $\grV$.
Let $\grW = \sum \im(f)$, with the sum taken over all homogeneous $f\in \grI$.
Then, $\grW$ is a graded $\grD$-subspace of $\grV$ and, just as in the
ungraded case, $\grI = \grHom_\grD(\grV,\grW)$.  Take any graded
$\grD$-subspace $\grY$ of $\grV$, such that $\grY$ is complementary to $\grW$,
and let $e\colon \grV\to \grW$ be the projection of $\grV$ onto $\grW$
along $\grY$. Then, the idempotent
$e$ is a degree-preserving graded homomorphism, so $e \in \grA_0$.  Clearly,
$\grI = e\grA$.
\end{proof}

\begin{remarks}
     \label{anisot.rem}
     (1) In Cor.~\ref{anisot.cor}, the hypothesis that $(F,v)$ is
     Henselian is used only to prove that~(a) implies~(b) and~(c); the
     implications (c)$\iff$(b)~$\Rightarrow$~(a) hold without this
     hypothesis (nor any tameness assumption).

     (2) Corollary~\ref{anisot.cor} may be regarded as a version of
     Springer's theorem for involutions. In a slightly different form, it
     has already been proved by Larmour \cite[Th.~4.5]{L}: to see this,
     observe that the residue involutions defined by Larmour are the
     direct summands of our residue involution $\sigma_0$ for a suitable gauge.
\end{remarks}

If the involution $\sigma$ is isotropic, we may still define up to
isomorphism an anisotropic kernel $(A,\sigma)_\an$ in such a way that
if $A=\End_DV$ and $\sigma=\ad_h$, then
$(A,\sigma)_\an\cong(\End_DV_0,\ad_{h_0})$ where $(V_0,h_0)$ is an
anisotropic kernel of $(V,h)$, see \cite{DLT},  and \cite{D} for
involutions of the second kind. The same construction
holds for graded simple algebras with involution.

\begin{theorem}
Let $\sigma_1$, $\sigma_2$ be $F$-linear involutions on $A$ such that
$\sigma_1\rvert_{Z(A)}=\sigma_2\rvert_{Z(A)}$.
For the Henselian valuation $v$ on $F$,
    let $g_1$, $g_2$ be
$v$-gauges on $A$ invariant under
$\sigma_1$ and $\sigma_2$ respectively. If $\charac(\overline{F})=2$,
assume neither $\sigma_1$ nor $\sigma_2$ is orthogonal. The following
conditions are equivalent:
\begin{enumerate}
\item[(a)]
the algebras with involution $(A,\sigma_1)$ and $(A,\sigma_2)$ are
isomorphic;
\item[(b)]
the graded algebras with anisotropic involution
$(\gr_{g_1}(A),\tilde\sigma_1)_\an$
and $(\gr_{g_2}(A),\tilde\sigma_2)_\an$ are isomorphic.
\end{enumerate}
\end{theorem}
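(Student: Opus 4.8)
The plan is to reduce the isomorphism problem over $F$ to one over the graded field $\gr(F)$, and then pass to anisotropic kernels. The implication (a)~$\Rightarrow$~(b) is the substantive one; for (b)~$\Rightarrow$~(a) I expect to run the argument essentially in reverse, using that a Henselian base field lets us lift graded data. So assume first that $(A,\sigma_1)\cong(A,\sigma_2)$, say via an $F$-algebra isomorphism $\psi\colon(A,\sigma_1)\iso(A,\sigma_2)$. Transport $g_2$ along $\psi$: the value function $g_2\circ\psi$ is then a $v$-gauge on $A$ invariant under $\sigma_1$ (invariance is immediate from $\psi\sigma_1=\sigma_2\psi$). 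By Th.~\ref{mainHensel.thm}, if $\sigma_1$ is anisotropic there is a \emph{unique} $v$-gauge on $A$ invariant under $\sigma_1$, so $g_2\circ\psi=g_1$ and $\psi$ induces an isomorphism of graded algebras with involution $(\gr_{g_1}(A),\tilde\sigma_1)\iso(\gr_{g_2}(A),\tilde\sigma_2)$, whence the same holds for the anisotropic kernels. In general $\sigma_1$ need not be anisotropic, so the first real step is to handle the isotropic case: replacing $(A,\sigma_1)$ and $(A,\sigma_2)$ by their anisotropic kernels does not change the truth of (a), by the standard Witt-type cancellation for hermitian forms (using $A=\End_DV$, $\sigma_i=\ad_{h_i}$, and the cited results \cite{DLT}, \cite{D}); one must check that the gauges restrict compatibly, i.e.\ that a $\sigma_i$-invariant gauge on $A$ restricts to a $(\sigma_i)_\an$-invariant gauge on the anisotropic-kernel algebra, realized as $\End_D((V_i)_0)$ via a $\sigma_i$-symmetric idempotent.

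Here is the main obstacle, and the heart of the argument: the anisotropic kernel of $(\gr_{g_i}(A),\tilde\sigma_i)$ must be identified with $\gr$ of the anisotropic kernel of $(A,\sigma_i)$. Concretely, write $A=\End_DV$ and $\sigma_i=\ad_{h_i}$; choosing a $w$-norm $\alpha_i$ on $V$ compatible with $h_i$ (which exists, and after adding a constant we may assume $g_i=\End(\alpha_i)$), Prop.~\ref{adjoint.prop} identifies $(\gr_{g_i}(A),\tilde\sigma_i)$ with $(\End_{\gr(D)}(\gr_{\alpha_i}(V)),\ad_{\tilde h_i})$. The claim reduces to: an anisotropic-kernel decomposition $(V,h_i)\cong(V_0^{(i)},h_0^{(i)})\perp(\text{hyperbolic})$, when the norm is compatible, passes to the residue form, i.e.\ $(\gr_{\alpha_i}(V),\tilde h_i)$ has anisotropic kernel $\gr$ of $(V_0^{(i)},h_0^{(i)})$ with its restricted (compatible) norm. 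This is where the hypotheses bite: compatibility of the norm with the orthogonal decomposition (so that it splits as a direct sum of norms on the summands) needs to be arranged — one can always choose $\alpha_i$ adapted to the decomposition using \cite[Cor.~3.6]{RTW} applied summand-wise — and then $\widetilde{h_i\perp h'}=\tilde h_i\perp\tilde{h'}$ while the residue of a hyperbolic (compatible-norm) form is hyperbolic by a direct computation on a hyperbolic splitting base. I expect this residual-hyperbolicity step, together with the bookkeeping that the chosen compatible norm can simultaneously be taken adapted to the kernel decomposition, to be the delicate point.

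Granting that identification, the argument closes as follows. By the anisotropic-kernel reduction we may assume $\sigma_1$, $\sigma_2$ are both anisotropic; then the uniqueness in Th.~\ref{mainHensel.thm} forces any $F$-algebra-with-involution isomorphism $(A,\sigma_1)\iso(A,\sigma_2)$ to carry $g_1$ to $g_2$ and hence to induce (b) at the level of full graded algebras, a fortiori for the anisotropic kernels — giving (a)~$\Rightarrow$~(b). For (b)~$\Rightarrow$~(a): a graded isomorphism of anisotropic kernels $(\gr_{g_1}(A),\tilde\sigma_1)_\an\iso(\gr_{g_2}(A),\tilde\sigma_2)_\an$ translates, via Prop.~\ref{adjoint.prop} and the identification above, into an isometry of residue forms $\tilde h_0^{(1)}\cong\tilde h_0^{(2)}$ over the graded division algebra $\gr(D)$. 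Since $(F,v)$ is Henselian and the setup is tame, such a graded isometry lifts to an isometry $h_0^{(1)}\cong h_0^{(2)}$ of the (anisotropic) hermitian forms over $D$ — this is precisely the lifting statement underlying Cor.~\ref{anisot.cor} / the Henselian lifting used throughout, plus the fact that $D_0$ determines $D$ up to isomorphism in the tame Henselian case (\cite{TWgr}) so that a $\gr(D)$-linear isometry descends to a $D$-linear one after identifying $\gr(D)$-data with $D$-data. By Witt's theorem for hermitian forms this yields $(V,h_1)\cong(V,h_2)$ and hence $(A,\sigma_1)\cong(A,\sigma_2)$, completing the proof.
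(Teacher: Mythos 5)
There is a genuine gap, and it sits exactly at what you yourself call the heart of the argument. You want to identify $(\gr_{g_i}(A),\tilde\sigma_i)_\an$ with $\gr$ of $(A,\sigma_i)_\an$ by choosing the compatible norm $\alpha_i$ ``adapted to'' a Witt decomposition of $h_i$ (via \cite[Cor.~3.6]{RTW} applied summand-wise). But the gauges $g_i$ are part of the data: statement (b) refers to the \emph{given} $g_i$, and by \cite[Th.~3.1, Prop.~1.22]{TWgr} a norm $\alpha_i$ with $\End(\alpha_i)=g_i$ is determined by $g_i$ up to an additive constant. So you are not free to re-choose $\alpha_i$ summand-wise: replacing it by a norm adapted to a Witt decomposition changes the gauge, and a priori also the graded algebra with involution appearing in (b). (That the anisotropic kernel in (b) does not depend on the choice of invariant gauge is precisely the corollary the paper deduces \emph{from} this theorem, so it cannot be assumed.) The same unproved identification is what your (b)~$\Rightarrow$~(a) leans on when you pass from a graded isomorphism of the kernels to an isometry $h_0^{(1)}\cong h_0^{(2)}$ of the ungraded anisotropic kernels. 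Until you prove, for the compatible norm actually determined by $g_i$, that the anisotropic kernel of $\tilde h_i$ coincides with the residue of an anisotropic kernel of $h_i$, the argument does not close.

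The paper sidesteps this entirely: after writing $\sigma_i=\ad_{h_i}$, $g_i=\End(\alpha_i)$ with $\alpha_i$ compatible (Prop.~\ref{compat.prop}) and $\tilde\sigma_i=\ad_{\tilde h_i}$ (Prop.~\ref{adjoint.prop}), it quotes the hermitian Springer-type theorems \cite[Th.~3.11]{RTW} (isometric forms with compatible norms have isometric anisotropic kernels of their residue forms) and \cite[Th.~4.6]{RTW} (over a Henselian base, an isometry of the anisotropic kernels of the residue forms lifts to an isometry of the forms), which are exactly the two transfer statements you are trying to reconstruct by hand. Two further points are glossed over in your sketch: an isomorphism of algebras with involution, graded or not, only gives \emph{similarity} of the underlying hermitian forms, so in both directions one must first rescale $h_2$ by a factor in $F^\times$ before invoking these theorems; and the case of $A$ split with $\sigma_1,\sigma_2$ symplectic is not covered by the ``even hermitian form with respect to an involution $\theta$ of the same type on $D$'' setup and must be treated separately (there both (a) and (b) hold trivially, the involutions being hyperbolic), while Prop.~\ref{type.prop} is needed to see that (b), like (a), forces $\sigma_1$ and $\sigma_2$ to have the same type, so that a common $\theta$ and forms over the same $D$ can be used.
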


\begin{proof}
It follows from Prop.~\ref{type.prop} that $\sigma_i$ and
$\tilde\sigma_i$ are of the same type. Therefore, (a) and (b) each
imply that $\sigma_1$ and $\sigma_2$ are of the same type. If $A$ is
split and $\sigma_1$, $\sigma_2$ are symplectic, then $\gr_{g_1}(A)$
and $\gr_{g_2}(A)$ are split and $\tilde\sigma_1$, $\tilde\sigma_2$
are symplectic, hence hyperbolic. In this case, (a) and (b) both hold
trivially. For the rest of the proof, we exclude this case and fix a
representation $A=\End_DV$ where $V$ is a right vector space over a
central division $K$-algebra $D$. We also fix an involution $\theta$
on $D$ of the same type as $\sigma_1$ and $\sigma_2$, and
non-degenerate even hermitian forms $h_1$, $h_2$ on $V$ with respect
to $\theta$ such that
\[
\sigma_1 \, = \, \ad_{h_1}\qquad\text{and}\qquad\sigma_2 \, = \, \ad_{h_2}.
\]
As observed in the proof of Prop.~\ref{existgauges.prop}, the valuation
$v$ extends uniquely to a valuation $w$ on $D$. By
\cite[Th.~3.1]{TWgr} and Prop.~\ref{compat.prop} we may also find
norms $\alpha_1$ and $\alpha_2$ on $V$ that are compatible with $h_1$
and $h_2$ respectively, such that
\[
g_1 \, = \, \End(\alpha_1)\qquad\text{and}\qquad g_2 \, = \, \End(\alpha_2),
\]
hence
\[
\gr_{g_1}(A) \ = \ \grEnd_{\gr(D)}\bigl(\gr_{\alpha_1}(V)\bigr)
\qquad\text{and}\qquad
\gr_{g_2}(A) \ = \ \grEnd_{\gr(D)}\bigl(\gr_{\alpha_2}(V)\bigr).
\]
It then follows from Prop.~\ref{adjoint.prop} that
\[
\tilde\sigma_1 \, = \, \ad_{\tilde{h_1}}
\qquad\text{and}\qquad
\tilde\sigma_2 \, = \, \ad_{\tilde{h_2}};
\]
hence, denoting by $(\grV_1,k_1)$ and $(\grV_2,k_2)$ the anisotropic kernels of
$(\gr_{\alpha_1}(V),\tilde{(h_1)})$ and
$(\gr_{\alpha_2}(V),\tilde{(h_2)})$ respectively,
\[
(\gr_{g_1}(A),\tilde\sigma_1)_\an \ \cong \ (\grEnd_{\gr(D)}(\grV_1),\ad_{k_1})
\quad\text{ and }\quad
(\gr_{g_2}(A),\tilde\sigma_2)_\an \ \cong \
(\grEnd_{\gr(D)}(\grV_2),\ad_{k_2}).
\]

If (a) holds, then $h_1$ and $h_2$ are similar. Scaling $h_2$ by a
factor in $F^\times$, we may assume $h_1\cong h_2$. By
\cite[Th.~3.11]{RTW}, the anisotropic kernels of
$\tilde{h_1}$ and $\tilde{h_2}$ are
isometric, hence (b) holds.

Conversely, if (b) holds, then the anisotropic kernels of
$\tilde{h_1}$ and $\tilde{h_2}$ are
similar. Scaling $h_2$ by a factor in~$F^\times$, we may assume that
they are isometric. By \cite[Th.~4.6]{RTW}, it follows that $h_1$ and
$h_2$ are isometric, hence (a) holds.
\end{proof}

\begin{corollary}
With the hypotheses of this section,
up to Witt-equivalence the graded algebra with involution
$(\gr_g(A),\tilde\sigma)$ depends only on the Witt-equivalence class
of $(A,\sigma)$, and not on the choice of the invariant $v$-gauge
$g$.
\end{corollary}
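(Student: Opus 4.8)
The plan is to reduce the statement to a comparison of anisotropic kernels and then to invoke the preceding theorem. Let $(A,\sigma)$ and $(A',\sigma')$ be algebras with involution over $F$ that lie in the same Witt-equivalence class, i.e.\ with isomorphic anisotropic kernels, and let $g$ and $g'$ be invariant $v$-gauges on $A$ and $A'$; since Witt-equivalence of graded algebras with involution also means isomorphism of anisotropic kernels, I must show $(\gr_g(A),\tilde\sigma)_\an\cong(\gr_{g'}(A'),\tilde\sigma')_\an$. Write $(B,\rho)$ for the common anisotropic kernel $(A,\sigma)_\an\cong(A',\sigma')_\an$. Then $\rho$ is anisotropic, so by Th.~\ref{mainHensel.thm} there is a unique $\rho$-special value function $\psi$ on $B$; it is a tame $v$-gauge invariant under $\rho$, and by Prop.~\ref{eqcond.prop} the involution $\tilde\rho$ on $\gr_\psi(B)$ is anisotropic, so $(\gr_\psi(B),\tilde\rho)$ is its own anisotropic kernel. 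It therefore suffices to prove the assertion $(\ast)$: for every invariant $v$-gauge $g$ on $(A,\sigma)$ one has $(\gr_g(A),\tilde\sigma)_\an\cong(\gr_\psi(B),\tilde\rho)$. Indeed, applying $(\ast)$ to $(A',\sigma')$ as well, and noting that the chosen isomorphism $(B,\rho)\cong(A',\sigma')_\an$ carries $\psi$ to the $\rho'$-special gauge on $(A',\sigma')_\an$ by the uniqueness part of Th.~\ref{mainHensel.thm}, both sides become isomorphic to $(\gr_\psi(B),\tilde\rho)$; and the case $(A',\sigma')=(A,\sigma)$ of $(\ast)$ is exactly the independence from the choice of gauge.

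To prove $(\ast)$, first dispose of the case in which $A$ is split and $\sigma$ is symplectic: then $(A,\sigma)$ is hyperbolic, $\tilde\sigma$ is symplectic on the split algebra $\gr_g(A)$ by Prop.~\ref{type.prop} and hence hyperbolic, and both sides of $(\ast)$ are trivial. In the remaining case, write $A=\End_DV$ and $\sigma=\ad_h$ for an even hermitian form $h$ on $V$ with respect to an involution $\theta$ on $D$ of the same type as $\sigma$, as in \cite[(4.2)]{BoI}, and let $w$ be the unique extension of $v$ to $D$. Decompose $(V,h)=(V_0,h_0)\perp(V_1,h_1)$ with $h_0$ anisotropic and $h_1$ hyperbolic, so that $(B,\rho)\cong(\End_DV_0,\ad_{h_0})$. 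On $V_0$ the norm $\alpha_0(x)={\textstyle\frac12}w\bigl(h_0(x,x)\bigr)$ is compatible with $h_0$ and its residue $\tilde{h_0}$ is anisotropic, exactly as in the proof of Th.~\ref{mainHensel.thm} (via \cite[Th.~4.6 and Prop.~4.2]{RTW}), and $\psi=\End(\alpha_0)$. On $V_1$ choose a hyperbolic base whose vectors all have value $0$; the associated splitting norm $\alpha_1$ is compatible with $h_1$ and its residue $\tilde{h_1}$ is hyperbolic. Then $\alpha:=\alpha_0\perp\alpha_1$ is a $w$-norm on $V$ compatible with $h$, so $g_0:=\End(\alpha)$ is an invariant $v$-gauge on $A$ by \cite[Prop.~1.19]{TWgr} and Prop.~\ref{compat.prop}; and since the residue of an orthogonal sum of compatible norms is the orthogonal sum of the residues, $\tilde h=\tilde{h_0}\perp\tilde{h_1}$, whose anisotropic kernel is $\tilde{h_0}$. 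By Prop.~\ref{adjoint.prop}, $(\gr_{g_0}(A),\tilde\sigma)$ is the graded algebra with involution $\bigl(\grEnd_{\gr(D)}(\gr_\alpha(V)),\ad_{\tilde h}\bigr)$, and taking anisotropic kernels gives $(\gr_{g_0}(A),\tilde\sigma)_\an\cong\bigl(\grEnd_{\gr(D)}(\gr_{\alpha_0}(V_0)),\ad_{\tilde{h_0}}\bigr)=(\gr_\psi(B),\tilde\rho)$.

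Finally, let $g$ be an arbitrary invariant $v$-gauge on $A$. Apply the preceding theorem with $\sigma_1=\sigma_2=\sigma$ and $g_1=g$, $g_2=g_0$: since $(A,\sigma_1)$ and $(A,\sigma_2)$ are trivially isomorphic, the theorem yields $(\gr_g(A),\tilde\sigma)_\an\cong(\gr_{g_0}(A),\tilde\sigma)_\an$, which together with the previous paragraph gives $(\ast)$. The main obstacle is precisely this last reduction: two $w$-norms compatible with $h$ need not have isometric residue forms, so one cannot compute the residue involution for a general gauge directly but must instead use the isomorphism of anisotropic kernels furnished by the preceding theorem. Everything else — the Witt decomposition of $h$ over $(D,\theta)$, the additivity of the residue of compatible norms under orthogonal sums, and the existence of a compatible norm on a hyperbolic plane with hyperbolic residue — is routine.
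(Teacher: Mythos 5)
Your proof is correct, and it rests on the same two pillars the paper intends: Prop.~\ref{adjoint.prop} (identifying $\tilde\sigma$ with $\ad_{\tilde h}$ for gauges of the form $\End(\alpha)$ with $\alpha$ compatible with $h$) and the theorem immediately preceding the corollary. The route differs in one genuine respect. The paper leaves the corollary unproved because the proof of that theorem extends verbatim to Witt-equivalent pairs of possibly different degrees: any invariant gauge is $\End(\alpha)$ with $\alpha$ compatible with $h$ (after adding a constant), its graded anisotropic kernel is $\grEnd_{\gr(D)}$ of the anisotropic kernel of $\tilde h$, and \cite[Th.~3.11, Th.~4.6]{RTW} show that the Witt class of $\tilde h$ depends only on the Witt class of $h$. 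You instead keep the theorem as a black box, applying it with $\sigma_1=\sigma_2=\sigma$ to compare an arbitrary invariant gauge with your Witt-adapted gauge $g_0=\End(\alpha_0\perp\alpha_1)$, and you anchor everything to the unique $\rho$-special gauge $\psi$ on the common anisotropic kernel $(B,\rho)$ furnished by Th.~\ref{mainHensel.thm}. What this buys is a derivation that uses only the \emph{statement} of the theorem (which compares two involutions on the same algebra) rather than re-running its proof for forms of different rank; the price is the extra construction and the (routine, and correctly identified as such) verifications that compatibility and residue forms behave additively under orthogonal sums and that a hyperbolic basis of value $0$ gives a compatible norm with hyperbolic residue. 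One degenerate case deserves a word: if $(A,\sigma)$ is hyperbolic but not split symplectic (e.g.\ unitary or orthogonal hyperbolic), then $V_0=0$ and $B$ is the zero algebra, so Th.~\ref{mainHensel.thm} cannot literally be invoked for $(B,\rho)$; but there $\tilde h=\tilde h_1$ is hyperbolic, both sides of your $(\ast)$ are trivial, and the comparison of $g$ with $g_0$ via the theorem still goes through, so this is only a presentational gap, not a mathematical one.
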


\section{Scalar extensions of involutions}
\label{tens.sec}

As an application of the results of \S\ref{sec:Hensel}, we consider a
basic case of the problem of
determining when an anisotropic involution remains anisotropic over a
scalar extension.

Let $\sigma$ be an $F$-linear involution on a finite-dimensional
simple algebra $A$ over a field $F$. Assume $v$~is a
valuation on $F$ and $A$ carries a $v$-gauge $g$ invariant under
$\sigma$. For any extension $(L,v_L)$ of $(F,v)$, we may consider the
$v_L$-gauge $g\otimes v_L$ on the $L$-algebra $A_L=A\otimes_FL$. By
Prop.~\ref{prop:tensprod}, this $v_L$-gauge is invariant under the
involution $\sigma\otimes\id_L$ on $A_L$. If the ``residue''
involution $(\sigma\otimes\id_L)_0$ is anisotropic, then
$\sigma\otimes\id_L$ is anisotropic by Cor.~\ref{anisot.cor} and
Remark~\ref{anisot.rem}(1), and the converse holds if $v_L$ is Henselian
and $A_L$~is tame over $L$, unless $\sigma$ is orthogonal and
$\charac(\overline{F})=2$. We consider below a case where this
residue can be explicitly calculated.

We first recall some  facts which will be used repeatedly below.  Let
$\alpha$ be a surmultiplicative $v$-norm
on a finite-dimensional algebra $A$ over a field $F$ with
valuation $v$.  If $e$ is an idempotent of $A$ with $\al(e) =0$ and
$N$ is any $F$-subspace of $A$, then $\tilde e^{\,2} = \tilde e$ in
$\gr(A)$ and
by \cite[Lemma~1.7]{TWgr},
\begin{equation}\label{idempotent}
\gr(eN) \, = \, \tilde e \gr(N) \quad \text {and} \quad
\gr(Ne)\, = \, \gr(N) \tilde e \quad\text {in} \ \gr(A).
\end{equation}
If $e\ne 1$, let $f = 1-e$.  Then,
$\al(f) \ge\min\big(\al(1),\al(e)\big) = 0$, but since
$f^2 = f$, $\al(f) \le 0$. So, $\al(f) = 0$,  hence
$\tilde f = \tilde 1 - \tilde e$ in $\gr(A)$, and hence
$$
\gr(A) \ = \ \tilde e\gr(A) \oplus \tilde f \gr(A)
\ = \ \gr(eA) \oplus \gr(fA).
$$
Therefore, by \cite[Remark~2.6]{RTW}, the direct sum decomposition
$A = eA \oplus fA$ is a splitting decomposition, i.e.,
$\al(a) = \min\big( \al(ea), \al(fa))$ for any $a\in A$.
Likewise $A = Ae\oplus Af$ is a splitting decomposition.

Recall also that an element $s\in A^\times$ is said to be
{\it$\al$-stable} if $\al(s^{-1})  = -\al(s)$. For
such an $s$ we have by \cite[Lemma~1.3 and (1.5)]{TWgr},
\begin{equation}\label{stable}
   \al(as) \,=\, \al(sa)  \,= \,  \al(a) + \al(s), \quad\text{hence}
\quad \tilde a \,\tilde s \,=\, \tilde {as\,} \quad \text {and} \quad
\tilde s \,\,\tilde a  \, =\, \tilde {sa\,}
\quad\text{
for every $a\in A$}.
\end{equation}

We now make some general observations on the tensor product of
valuations. Let $L/F$ be a finite separable field extension. Recall
that the separability idempotent of $L$ is the idempotent $e\in
L\otimes_FL$  determined uniquely by the conditions that
\begin{equation}
     \label{eq:sepid}
     e\cdot(x\otimes1)\,= \,e\cdot(1\otimes x) \qquad\text{for all $x\in L$}
\end{equation}
and the multiplication map $L\otimes_FL\to L$ carries $e$ to $1$, see
for instance \cite[Prop.~(18.10)]{BoI}. The separability of $L/F$
implies that the
bilinear trace form
\[
T\colon L\times L\to F,\qquad T(x,y)\, =\, \Tr_{L/F}(xy)
\]
is nondegenerate.

\begin{proposition}
     \label{prop:vv}
     Suppose $v\colon F\to\Gamma\cup\{\infty\}$ is a valuation that
     extends uniquely to a valuation $v_L$ on~$L$, and that the valued
     field extension $(L,v_L)$ of $(F,v)$ is tame. Then $v_L$ is a
     $v$-norm on $L$ which is compatible with the bilinear trace form
     $T$, and $(v_L\otimes v_L)(e)=0$.
\end{proposition}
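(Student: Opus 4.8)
The plan is to prove the three assertions in turn — that $v_L$ is a $v$-norm, that it is compatible with $T$, and that $(v_L\otimes v_L)(e)=0$ — the middle one being the substantive point. First I would note that a tame valued field extension is in particular defectless, so $\DIM{\gr_{v_L}(L)}{\gr(F)}=\DIM{L}{F}$ and therefore $v_L$ is a $v$-norm on $L$ by \cite[Cor.~2.3]{RTW}. I then fix a splitting base $(u_i)_{i=1}^n$ of $L$ for $v_L$, where $n=\DIM{L}{F}$, to be used at the end.

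For compatibility — which by definition (see \cite[Prop.~3.5]{RTW}) means $v_L^\sharp=v_L$ for the dual norm $v_L^\sharp$ of $v_L$ relative to $T$ — I would prove the two inequalities separately. The inequality $v_L^\sharp\ge v_L$ unwinds to the assertion that $v(\Tr_{L/F}(z))\ge v_L(z)$ for all $z\in L$: choosing a normal closure $N$ of $L/F$ and an extension $w$ of $v$ to $N$, the uniqueness of the extension of $v$ to $L$ forces $w\circ\tau=v_L$ for every $F$-embedding $\tau\colon L\hookrightarrow N$, and then $v(\Tr_{L/F}(z))=w\bigl(\tsum_\tau\tau(z)\bigr)\ge\min_\tau w(\tau(z))=v_L(z)$. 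For the reverse inequality $v_L^\sharp\le v_L$ it is enough to exhibit a single element $z_0\in L^\times$ with $v(\Tr_{L/F}(z_0))=v_L(z_0)$, because then, for any $x\in L^\times$, the substitution $y=z_0x^{-1}$ in the definition of $v_L^\sharp(x)$ yields $v_L^\sharp(x)\le v_L(x)$. To produce such a $z_0$ I would use the structure of tamely ramified extensions: writing $e=\IND{\Gamma_L}{\Gamma_F}$, which is prime to $\charac(\overline F)$ (hence nonzero in $\overline F$), and using that $\overline L/\overline F$ is separable, one has the tame trace identity $\overline{\Tr_{L/F}(z)}=\bar e\,\Tr_{\overline L/\overline F}(\bar z)$ for $z$ in the valuation ring of $(L,v_L)$; since $\Tr_{\overline L/\overline F}$ is not identically zero, any lift $z_0$ of a residue element with nonzero $\Tr_{\overline L/\overline F}$ satisfies $v(\Tr_{L/F}(z_0))=0=v_L(z_0)$. (Alternatively one may pass to the Henselization, which is legitimate because uniqueness of the extension makes $L\otimes_F F_h$ a field with unchanged value group, residue field and trace, and then take $z_0$ to be a lift from the maximal unramified subextension, the remaining totally tamely ramified step multiplying the trace by its degree — a unit at the residue level.)

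Finally, for $(v_L\otimes v_L)(e)=0$, let $(u_i^\sharp)_{i=1}^n$ be the base of $L$ dual to $(u_i)$ for $T$, i.e.\ $\Tr_{L/F}(u_iu_j^\sharp)=\delta_{ij}$; then $e=\tsum_{i=1}^n u_i\otimes u_i^\sharp$ by the standard basis-free description of the separability idempotent (cf.\ \cite[Prop.~(18.10)]{BoI}). As $v_L$ is compatible with $T$, \cite[Lemma~3.4]{RTW} applied to the splitting base $(u_i)$ shows that $(u_i^\sharp)$ is a splitting base for $v_L^\sharp=v_L$ with $v_L(u_i^\sharp)=-v_L(u_i)$, so using the formula for $v_L\otimes v_L$ on a splitting base of the first tensor factor we get $(v_L\otimes v_L)(e)=\min_{1\le i\le n}\bigl(v_L(u_i)+v_L(u_i^\sharp)\bigr)=0$. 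The step I expect to be the main obstacle is the production of $z_0$: this is exactly where tameness is needed, and it rests on the tame trace identity (a consequence of the structure theory of tamely ramified extensions) or, equivalently, on a reduction to the Henselian case; everything else is routine manipulation with norms, dual bases, and the already-established behavior of the tensor product of value functions.
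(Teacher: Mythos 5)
Your proposal is correct and takes essentially the same route as the paper's own proof: the inequality $v\bigl(\Tr_{L/F}(z)\bigr)\ge v_L(z)$ via a Galois closure and uniqueness of the extension, the production of an element with unit-level trace from the residue trace formula (which the paper, exactly as in your parenthetical alternative, justifies by passing to the Henselization, where Ershov's identity applies since $L\otimes_F F_h$ is a field), and the dual-basis expression of the separability idempotent combined with $v_L(\ell_i^\sharp)=-v_L(\ell_i)$ for the final computation. No gaps.
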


\begin{proof}
     Since $v_L$ is the unique valuation extending $v$ to $L$ and since
     the extension is defectless, it follows that $v_L$ is a $v$-norm
     (indeed, a $v$-gauge) on $L$, see \cite[Cor.~1.9]{TWgr}.

     We claim
     that $v\bigl(\Tr_{L/F}(x)\bigr)\geq v_L(x)$ for all $x\in
     L^\times$. To see this, consider a Galois closure $M$ of $L$ over
     $F$ and an extension $v_M$ of $v$ to $M$. For every $F$-linear
     embedding $\iota\colon L\hookrightarrow M$ the composition
     $v_M\circ\iota$ is a valuation on $L$ extending $v$, hence
     $v_M\circ\iota=v_L$. Since $\Tr_{L/F}(x)=\sum\iota(x)$, where the
     sum extends over all embeddings $\iota\colon L\hookrightarrow M$, we
     have
     \[
     v\bigl(\Tr_{L/F}(x)\bigr) \, = \, v_M\bigl(\tsum\limits_\iota
     \iota(x)\bigr) \,
     \geq \, \min\limits_\iota\bigl(v_M\circ\iota(x)\bigr) \, =  \, v_L(x),
     \]
     proving the claim. It follows that for all $x$, $y\in L^\times$,
     \begin{equation}
       \label{eq:compaT}
       v\bigl(T(x,y)\bigr) \, \geq  \, v_L(x)+v_L(y).
     \end{equation}
     To show that $v_L$ is compatible with $T$, it remains to show that
     for any $x\in L^\times$ there exists $y\in L^\times$ for which
     equality holds in \eqref{eq:compaT}. For this, it suffices to show
     that there exists $\ell\in L^\times$ such that
     $v\bigl(\Tr_{L/F}(\ell)\bigr)=v_L(\ell)$, since equality then  holds
     in~\eqref{eq:compaT} with $y=\ell x^{-1}$. For every $\ell\in
     L^\times$ with $v_L(\ell)=0$ we have
     \begin{equation}
       \label{eq:trace}
       \overline{\Tr_{L/F}(\ell)} \ = \ \IND{\Gamma_L}{\Gamma_F}\cdot
       \Tr_{\overline{L}/\overline{F}}(\overline{\ell})
     \end{equation}
     by \cite[p.~65, Cor.~1]{Er1}. (Ershov assumes his valuation is
Henselian; but the
result carrries over to the situation here: Let
$F_h$ be the Henselization of $F$ with respect to
$v$. Since the unique
extension of $v$ to~$L$ is defectless,
for any compositum of $L$ with $F_h$ we have
$\DIM{L\cdot F_h}{F_h} \ge \DIM{\ov L}{\ov
F}\IND{\Gamma_L}{\Gamma_F}=\DIM LF$.  Hence, $L\otimes_F F_h$ is a
field,
and \eqref{eq:trace} holds for $L/F$ because it holds for
$(L\otimes _F F_h)/F_h$.)

Since $L/F$ is tame, the residue extension
     $\overline{L}/\overline{F}$ is separable and $\charac(\overline{F})$
     does not divide $\IND{\Gamma_L}{\Gamma_F}$. Therefore, we may find
     $\ell\in L$ such that $v_L(\ell)=0$ and
     $\Tr_{\overline{L}/\overline{F}}(\overline{\ell})\neq0$. Then
     \eqref{eq:trace}~ shows that $\overline{\Tr_{L/F}(\ell)}\neq0$, hence
     \[
     v\bigl(\Tr_{L/F}(\ell)\bigr) \, = \, 0 \, = \, v_L(\ell).
     \]
     Therefore, $v_L$ is compatible with $T$; it thus coincides with its
     dual norm $v_L^\sharp$.

     To complete the proof, we compute $(v_L\otimes v_L)(e)$. Let
     $(\ell_i)_{i=1}^n$ be a splitting $F$-base of $L$ for $v_L$, and let
     $(\ell_i^\sharp)_{i=1}^n$ be the dual base for the form $T$. By
     \cite[Prop.~(18.12)]{BoI} we have,
     \[
     e \, = \, \tsum_{i=1}^n\ell_i\otimes\ell_i^\sharp,
     \]
     hence,
     \[
     (v_L\otimes v_L)(e) \ = \ \min\limits_{1\leq i\leq n}\bigl(
     v_L(\ell_i)+v_L(\ell_i^\sharp)\bigr).
     \]
     Now, for all $i=1$, \ldots, $n$ we have
     $v_L(\ell_i^\sharp)=v_L^\sharp(\ell_i^\sharp)=-v_L(\ell_i)$ by
     \cite[Lemma~3.4]{RTW}. Therefore, $(v_L\otimes v_L)(e)=0$.
\end{proof}

Continuing with the same notation and hypotheses as in
Prop.~\ref{prop:vv}, we now assume further that the extension $L/F$ is
Galois. Let $G$ denote its Galois group.
Since $v_L$ is the unique extension of $v$ to $L$,
$v_L \circ \iota = v_L$ for any $\iota \in G$, and hence $\iota$
induces a graded $\gr(F)$-automorphism $\tilde\iota$ of $\gr(L)$.
For $\iota\in G$, let
\[
e_\iota=(\id\otimes\iota)(e)\in L\otimes_FL,
\]
and let $\tilde e_\iota$ be the image of $e_\iota$ in
$\gr(L\otimes_FL)$, which is canonically  identified with $\gr(L) \otimes
_{\gr(F)} \gr(L)$ by Prop.~\ref{prop:tensprod}.

\begin{lemma}
     \label{lem:vv}
     The elements $(e_\iota)_{\iota\in G}$ form a family of orthogonal
     idempotents such that $\sum_{\iota\in G}e_\iota=1$.
They are the primitive idempotents of $L\otimes_F L$.
They satisfy
     $(v_L\otimes v_L)(e_\iota)=0$ and
     \begin{equation}
       \label{eq:sepidi}
       e_\iota\cdot(x\otimes1) \, = \, e_\iota\cdot\bigl(1\otimes\iota(x)\bigr)
       \qquad\text{for $x\in L$}.
     \end{equation}
Likewise,
for any $y\in \gr(L)$,
\begin{equation}
       \label{eq:tildesepidi}
       \tilde e_\iota\cdot(y\otimes \tilde 1)\, =\,
\tilde e_\iota\cdot\bigl(\tilde 1\otimes\tilde\iota(y)\bigr)
       \quad\text{in \ $\gr(L) \otimes_{\gr(F)}\gr(L)$}.
     \end{equation}
Moreover, $(\iota\otimes\iota)(e)=e$ for $\iota\in G$.
\end{lemma}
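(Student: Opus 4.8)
\textbf{Proof plan for Lemma~\ref{lem:vv}.}
The plan is to deduce everything from two facts about the separability idempotent $e\in L\otimes_F L$: the defining property \eqref{eq:sepid}, and the fact that the multiplication map sends $e$ to $1$. First I would establish the algebraic statements (orthogonal idempotents, sum $1$, they are the primitive idempotents, equation \eqref{eq:sepidi}) purely formally, then feed these through the graded machinery of Prop.~\ref{prop:tensprod} and Prop.~\ref{prop:vv} to get the valuation-theoretic statements.

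For the algebra part: since $L/F$ is Galois, $L\otimes_F L\cong\prod_{\iota\in G}L$ via $x\otimes y\mapsto\bigl(x\,\iota(y)\bigr)_{\iota\in G}$; under this isomorphism the primitive idempotents are the standard coordinate idempotents $\epsilon_\iota$. So it suffices to identify $e_\iota=(\id\otimes\iota)(e)$ with $\epsilon_\iota$. Applying $\id\otimes\iota$ to \eqref{eq:sepid} gives $e_\iota\cdot(x\otimes1)=e_\iota\cdot\bigl(1\otimes\iota(x)\bigr)$ for all $x\in L$, which is \eqref{eq:sepidi}; this says $e_\iota$ lies in the subalgebra where the two embeddings $x\mapsto x\otimes1$ and $x\mapsto1\otimes\iota(x)$ agree, i.e.\ $e_\iota$ is supported on the single coordinate indexed by $\iota$. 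Combined with the fact that $\id\otimes\iota$ is a ring automorphism of $L\otimes_FL$ (so $e_\iota$ is still an idempotent) and that the multiplication-type normalization carries $e_\iota$ to a nonzero element, one concludes $e_\iota=\epsilon_\iota$. Orthogonality, $\sum_\iota e_\iota=1$, and primitivity are then immediate. For $(\iota\otimes\iota)(e)=e$: apply $\iota\otimes\iota$ to \eqref{eq:sepid}; since $\iota$ permutes $L$ bijectively and commutes with nothing special, one checks directly that $(\iota\otimes\iota)(e)$ again satisfies \eqref{eq:sepid} and the normalization condition, hence equals $e$ by uniqueness.

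For the valuation statements: since $v_L$ is the unique extension of $v$ and the extension is tame, Prop.~\ref{prop:vv} gives $(v_L\otimes v_L)(e)=0$. As $\id\otimes\iota$ is value-preserving for $v_L\otimes v_L$ (because $v_L\circ\iota=v_L$ and $\iota$ is the identity on $F$, so \eqref{alphatensorbeta} is preserved), we get $(v_L\otimes v_L)(e_\iota)=(v_L\otimes v_L)(e)=0$. Finally, \eqref{eq:tildesepidi}: Prop.~\ref{prop:tensprod} identifies $\gr_{v_L\otimes v_L}(L\otimes_FL)$ with $\gr(L)\otimes_{\gr(F)}\gr(L)$ compatibly with multiplication, so passing \eqref{eq:sepidi} to the associated graded ring yields $\tilde e_\iota\cdot(\tilde x\otimes\tilde1)=\tilde e_\iota\cdot(\tilde1\otimes\tilde\iota(\tilde x))$ for $x\in L^\times$; since homogeneous elements of $\gr(L)$ are all of the form $\tilde x$ with $x\in L^\times$ and both sides are $\gr(F)$-bilinear and additive, this extends to all $y\in\gr(L)$.

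The one point requiring genuine care — the main (minor) obstacle — is making sure that passing \eqref{eq:sepidi} to the graded ring is legitimate: the map $a\mapsto\tilde a$ is not additive, so one cannot simply apply it termwise to a relation. The correct argument is that under the \emph{ring} isomorphism $\Omega$ of Prop.~\ref{prop:tensprod}, the elements $\tilde e_\iota$ are honest elements of $\gr(L)\otimes_{\gr(F)}\gr(L)$ with $\Omega(\tilde e_\iota)$ computable, and the identities $\tilde e_\iota\cdot\tilde{x\otimes1}=\tilde{e_\iota\cdot(x\otimes1)}=\tilde{e_\iota\cdot(1\otimes\iota(x))}=\tilde e_\iota\cdot\tilde{1\otimes\iota(x)}$ hold because $v_L\otimes v_L$ is surmultiplicative and $(v_L\otimes v_L)(e_\iota)=0$ forces the relevant products to be \emph{stable} in the sense of \eqref{stable}, so no terms are killed in passing to $\gr$. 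Everything else is formal bookkeeping with \eqref{eq:sepid} and uniqueness of the separability idempotent.
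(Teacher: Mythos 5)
Your proposal is correct, and on the valuation-theoretic half it coincides with the paper's proof: \eqref{eq:sepidi} by applying $\id\otimes\iota$ to \eqref{eq:sepid}; $(v_L\otimes v_L)(e_\iota)=0$ from $v_L\circ\iota=v_L$ (uniqueness of the extension); \eqref{eq:tildesepidi} by reducing to nonzero homogeneous $y=\tilde x$ and using that $x\otimes1$ and $1\otimes\iota(x)$ are $(v_L\otimes v_L)$-stable so that \eqref{stable} gives $\tilde e_\iota\cdot\tilde{x\otimes1}=[e_\iota\cdot(x\otimes1)]^\sim=[e_\iota\cdot(1\otimes\iota(x))]^\sim=\tilde e_\iota\cdot\tilde{1\otimes\iota(x)}$; and $(\iota\otimes\iota)(e)=e$ by the uniqueness characterization of $e$. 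Where you genuinely diverge is the purely algebraic part: you get orthogonality, $\sum_\iota e_\iota=1$, and primitivity by invoking the Galois splitting $L\otimes_FL\cong\prod_{\kappa\in G}L$ and showing each $e_\iota$ is a coordinate idempotent (note the coordinate it sits in is $\iota$ or $\iota^{-1}$ depending on your convention for the splitting, and the nonvanishing of $e_\iota$ comes from injectivity of $\id\otimes\iota$ rather than from the normalization -- harmless details). The paper instead stays inside $L\otimes_FL$: primitivity of $e$ from $e\cdot(L\otimes_FL)=e\cdot(L\otimes1)\cong L$, orthogonality from the computation $e_\iota e_\kappa\cdot\bigl(1\otimes[\kappa(x)-\iota(x)]\bigr)=0$ with $1\otimes[\kappa(x)-\iota(x)]$ a unit when $\iota\neq\kappa$, and $\sum_\iota e_\iota=1$ from the explicit dual-basis formula $e=\sum_i\ell_i\otimes\ell_i^\sharp$ already used in Prop.~\ref{prop:vv}. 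Your route is the more standard structural one (and the paper itself notes, citing Pierce, that these idempotent facts are well known); the paper's computations buy self-containedness and reuse of the trace-form dual basis. One small wording caution: in your key graded step it is the elements $x\otimes1$ and $1\otimes\iota(x)$ that must be stable -- $(v_L\otimes v_L)(e_\iota)=0$ is not what makes \eqref{stable} applicable, and ``the products are stable'' is not the right formulation, though the identities you write are exactly the correct ones.
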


\begin{proof} Equation \eqref{eq:sepid} shows that $e \cdot(L\otimes_F L)
= e\cdot (L\otimes 1) \cong L$.  Since $L$ is a field,
$e$ must be a primitive idempotent.
     Equation~\eqref{eq:sepidi} readily follows by applying
     $\id\otimes\iota$ to each side of \eqref{eq:sepid}.
    For equation~\eqref{eq:tildesepidi}, it suffices to verify the
equality when  $y$ is
homogeneous and nonzero. But then $y = \tilde x$ for
some nonzero~$x\in L$.
Both $x\otimes 1$
and $1\otimes \iota(x)$ are $v_L\otimes v_L$-stable in
$L\otimes _F L$, as defined preceding \eqref{stable}~above.
    Hence, using equations \eqref{stable} and
\eqref{eq:sepidi},
$$
\tilde e_\iota \cdot \big(\tilde x \otimes \tilde1\big) \, = \, \tilde e_\iota
\cdot \big(\tilde{x\otimes 1}\big) \, = \, \big[e_\iota\cdot(x\otimes 1)\big]
\,\,\tilde
\, \,= \, \big[e_\iota\cdot\bigl(1\otimes
\iota(x)\bigr)\big]\,\,\tilde  \, \,= \,
\tilde e_\iota\cdot\big(\tilde{1\otimes \iota(x)}\big) \, = \,
\tilde e_\iota\cdot \big(\tilde 1 \otimes \tilde \iota(\tilde x)\big).
$$
Since $e$ is a primitive
     idempotent, it is clear that each $e_\iota$ is
also a primitive idempotent. For
     $\iota$, $\kappa\in G$ and $x\in L$, as $L\otimes_F L$~is commutative
   we have
$$
e_\iota e_\kappa\cdot\bigl(1\otimes[\kappa(x)-\iota(x)]\bigr)
   \ = \ e_\iota e_\kappa\cdot\bigl(1\otimes\kappa(x)\bigr)
   \, -\, \ e_\kappa e_\iota \cdot\bigl(1\otimes\iota(x)\bigr)
   \ = \ \big(e_\iota e_\kappa- e_\kappa e_\iota\big)\cdot (x\otimes 1)
   \ = \ 0.
$$
For $\iota \ne \kappa$, if we choose $x\in L$ with $\iota(x) \ne
\kappa(x)$, then $1\otimes[\kappa(x)-\iota(x)]$ is a unit of
$L\otimes_F L$; hence, $e_\iota e_\kappa=0$.

     As observed in the proof of Prop.~\ref{prop:vv}, we have
     $e=\sum_{i=1}^n\ell_i\otimes\ell_i^\sharp$ if $(\ell_i)_{i=1}^n$ is
     an $F$-base of $L$ and $(\ell_i^\sharp)_{i=1}^n$ is the dual base
     for the bilinear form $T$. It follows that
     $e_\iota=\sum_{i=1}^n\ell_i\otimes\iota(\ell_i^\sharp)$ for
     $\iota\in G$, hence
     \begin{equation}
       \label{eq:sumid}
       \tsum_{\iota\in G}e_\iota \ = \ \tsum_{i=1}^n\ell_i\otimes
       \Tr_{L/F}(\ell_i^\sharp).
     \end{equation}
     Since $(\ell_i^\sharp)_{i=1}^n$ is the dual base of
     $(\ell_i)_{i=1}^n$, we have
     \[
     x \ = \ \tsum_{i=1}^n\ell_i\Tr_{L/F}(\ell_i^\sharp x) \qquad\text{for
       $x\in L$}.
     \]
     In particular, $\sum_{i=1}^n\ell_i\Tr_{L/F}(\ell_i^\sharp)=1$, and
     equation~\eqref{eq:sumid} yields $\sum_{\iota\in G}e_\iota=1$.
So, the $e_\iota$ are all the primitive idempotents of $L\otimes_F L$.

     Since $v_L$ is the unique valuation extending $v$ to $L$, we have
     $v_L\circ\iota=v_L$ for all $\iota\in G$, hence
     \[
     (v_L\otimes v_L)(e_\iota)=(v_L\otimes v_L)(e)=0 \qquad\text{for all
       $\iota\in G$}.
     \]
     Finally, it is clear that $(\iota\otimes\iota)(e)$ satisfies the
     same equation~\eqref{eq:sepid} as $e$ and is carried to $1$ by the
     multiplication map $L\otimes_FL\to L$. Since these properties
     determine $e$ uniquely, we have $(\iota\otimes\iota)(e)=e$ for all
     $\iota\in G$.
\end{proof}

It is well-known (cf.~ \cite[pp.~256--257, Lemma b]{P}) that the
primitive idempotents of $L\otimes_F L$
are indexed by $G$ and satisfy \eqref{eq:sepidi}.
The further properties of the $e_\iota$ given
in Lemma~\ref{lem:vv} will
be useful in what follows.

Now, assume further that $L\subseteq D$ for some finite-dimensional
division $F$-algebra $D$, and that $v$ extends to a valuation $v_D$ on
$D$ such that $D/F$ is defectless; i.e., $v_D$ is a $v$-norm on
$D$. The restriction of $v_D$ to $L$ is then the unique valuation
$v_L$ extending $v$.  We will use the idempotents $(e_\iota)_{\iota\in G}$
to analyze extensions of involutions from $D$ to $D\otimes_F L$.
Let $C$ be the centralizer $C_D(L)$.  Viewing $D$ as a right
$C$-vector space, we have the canonical isomorphism
\begin{equation}
    \label{eq:canisom}
    \eta\colon D\otimes_FL \ \iso \ \End_C(D),
\end{equation}
which carries $d\otimes\ell$ to the map $x\mapsto dx\ell$ for $d$,
$x\in D$ and $\ell\in L$. For $\iota\in G$, consider the following
$C$-subspace of $D$:
$$
D_\iota\ = \ \{ d\in D \,| \ \ell d = d\,\iota(\ell) \text{ for all }
\ell \in L\}.
$$
Since $\iota$ on $L$ is induced by an inner automorphism of $D$
by Skolem-Noether, $D_\iota\ne \{0\}$. Since in addition,
$D_{\id} = C$ and $D_\kappa \cdot D_\iota \subseteq D_{\iota \kappa}$
for all $\iota, \kappa\in G$, we must have $\dim_C(D_\iota) = 1$
for each $\iota$.

\enlargethispage{\baselineskip}
\enlargethispage{\baselineskip}

\begin{lemma}
     \label{prop:dirsum}
     We have $D\otimes_FL=\bigoplus_{\iota\in G}
     e_\iota\,(D\otimes1)$ and $D=\bigoplus_{\iota\in G}D_\iota$, and
     these direct sums are splitting decompositions of $D\otimes_FL$ and
     $D$ with respect to $v_D\otimes v_L$ and $v_D$, respectively. More
     precisely, we have
     \[
     (v_D\otimes v_L)\bigl(\tsum_{\iota\in G}
     e_\iota\cdot(x_\iota\otimes1) \bigr) = \min\limits_{\iota\in
       G}\bigl(v_D(x_\iota)\bigr) \qquad\text{for $x_\iota\in D$},
     \]
     and
     \[
     v_D\bigl(\tsum_{\iota\in G}y_\iota\bigr)=\min\limits_{\iota\in G}\bigl(
     v_D(y_\iota)\bigr)\qquad\text{for $y_\iota\in D_\iota$}.
     \]
     Furthermore, for all $\iota$, $\kappa\in G$,
     \[
     e_\iota (D\otimes_F L) e_\kappa =  e_\iota (D_{\kappa^{-1}\iota}
     \otimes 1).
     \]
\end{lemma}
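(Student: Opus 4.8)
The plan is to establish the four assertions in turn, using the primitive idempotents $e_\iota$ of $L\otimes_FL$, the properties collected in Lemma~\ref{lem:vv}, the compatibility of $v_L$ with the trace form (Prop.~\ref{prop:vv}), and the canonical isomorphism $\eta$ of \eqref{eq:canisom}. For $D\otimes_FL=\bigoplus_\iota e_\iota(D\otimes1)$: by Lemma~\ref{lem:vv} the $e_\iota$ are orthogonal idempotents with $\sum_{\iota\in G}e_\iota=1$, so $D\otimes_FL=\bigoplus_\iota e_\iota(D\otimes_FL)$; and since $D\otimes1$ and $1\otimes L$ commute in $D\otimes_FL$, \eqref{eq:sepidi} gives $e_\iota(d\otimes\ell)=e_\iota(1\otimes\ell)(d\otimes1)=e_\iota(\iota^{-1}(\ell)d\otimes1)$, whence $e_\iota(D\otimes_FL)=e_\iota(D\otimes1)$. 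For the valuation statement, $v_D\otimes v_L$ is surmultiplicative by Prop.~\ref{prop:tensprod} (here $v_D$ is a $v$-norm because $D/F$ is defectless), and $(v_D\otimes v_L)(e_\iota)=0$, since $v_D\otimes v_L$ restricts to $v_L\otimes v_L$ on $L\otimes_FL$ and $(v_L\otimes v_L)(e_\iota)=0$ by Lemma~\ref{lem:vv}. Hence $(v_D\otimes v_L)(e_\iota z)\ge(v_D\otimes v_L)(z)$ for all $z$; applied to $z=\sum_\kappa z_\kappa$ with $z_\kappa\in e_\kappa(D\otimes1)$ (so $e_\iota z=z_\iota$) this gives $\min_\iota(v_D\otimes v_L)(z_\iota)\ge(v_D\otimes v_L)(\sum_\iota z_\iota)\ge\min_\iota(v_D\otimes v_L)(z_\iota)$, i.e.\ equality, which is exactly the splitting property. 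The displayed formula then follows, as $(v_D\otimes v_L)(e_\iota(x_\iota\otimes1))\ge v_D(x_\iota)$ by surmultiplicativity and \eqref{alphatensorbeta}, while conversely $v_D(x_\iota)=(v_D\otimes v_L)(x_\iota\otimes1)=\min_\kappa(v_D\otimes v_L)(e_\kappa(x_\iota\otimes1))\le(v_D\otimes v_L)(e_\iota(x_\iota\otimes1))$.

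For $D=\bigoplus_\iota D_\iota$, I would transport the $e_\iota$ through $\eta$. Writing $e=\sum_i\ell_i\otimes\ell_i^\sharp$ as in the proof of Prop.~\ref{prop:vv}, so $e_\iota=\sum_i\ell_i\otimes\iota(\ell_i^\sharp)$, one has $\eta(e_\iota)\colon x\mapsto\sum_i\ell_ix\,\iota(\ell_i^\sharp)$; applying the $F$-linear map $a\otimes b\mapsto ax\iota(b)$ to the identity $e(\ell\otimes1)=e(1\otimes\ell)$ shows $\ell\cdot\eta(e_\iota)(x)=\eta(e_\iota)(x)\cdot\iota(\ell)$, i.e.\ $\im\eta(e_\iota)\subseteq D_\iota$. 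Since the $\eta(e_\iota)$ are orthogonal idempotents of $\End_C(D)$ summing to the identity, $D=\bigoplus_\iota\im\eta(e_\iota)\subseteq\sum_\iota D_\iota$, which forces $D=\bigoplus_\iota D_\iota$ (the sum being direct by Dedekind's independence of the distinct $F$-automorphisms of $L$) and $\im\eta(e_\iota)=D_\iota$. To see this is a splitting decomposition for $v_D$, choose by Skolem--Noether a unit $s_\iota\in D_\iota$, so $D_\iota=Cs_\iota$ with $s_\iota$ automatically $v_D$-stable, whence $\gr(D_\iota)=\tilde s_\iota\gr(C)$ by \eqref{stable}; by \cite[Remark~2.6]{RTW} it suffices to show $\gr(D)=\bigoplus_\iota\tilde s_\iota\gr(C)$. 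Now $\tilde s_\iota$ conjugates $\gr(L)$ by $\tilde\iota$, and the $\tilde\iota$ $(\iota\in G)$ are pairwise distinct automorphisms of $\gr(L)$ over $\gr(F)$ because $L/F$ is tame (reducing if necessary to the Henselian case, where $L\otimes_FF_h$ is a field by the argument used after \eqref{eq:trace}, $\gr$ being unchanged under Henselization); so a Dedekind-style independence argument inside the graded division ring $\gr(D)$ — using that $\gr(C)$ is a graded division ring centralizing $\gr(L)$ — shows $\sum_\iota\tilde s_\iota\gr(C)$ is direct, and comparing $\gr(C)$-dimensions with $[\gr(D):\gr(C)]=[D:C_D(L)]=[L:F]=|G|$ (the first equality by defectlessness, the second by the double centralizer theorem) gives equality.

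For the product formula, the same idempotent bookkeeping applies: as above $(D\otimes_FL)e_\kappa=(D\otimes1)e_\kappa$, so $e_\iota(D\otimes_FL)e_\kappa=\sum_\tau e_\iota(D_\tau\otimes1)e_\kappa$, and for $d\in D_\tau$ one computes $(1\otimes\ell)\cdot e_\iota(d\otimes1)e_\kappa$ in two ways — moving $1\otimes\ell$ past $e_\kappa$ on the right, versus pushing it through $e_\iota$ via \eqref{eq:sepidi} and then through $d\otimes1$ using $\iota^{-1}(\ell)d=d\,\tau\iota^{-1}(\ell)$ — to obtain $e_\iota(d\otimes1)e_\kappa\cdot(1\otimes[\ell-\kappa\tau\iota^{-1}(\ell)])=0$ for all $\ell\in L$; since $1\otimes\mu$ is a unit of $D\otimes_FL$ whenever $\mu\in L^\times$, this forces $e_\iota(D_\tau\otimes1)e_\kappa=0$ unless $\tau=\kappa^{-1}\iota$, while the same computation with $d\in D_{\kappa^{-1}\iota}$ and the remaining idempotents gives $e_\iota(d\otimes1)(1-e_\kappa)=0$, i.e.\ $e_\iota(D_{\kappa^{-1}\iota}\otimes1)e_\kappa=e_\iota(D_{\kappa^{-1}\iota}\otimes1)$, so the formula follows. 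The one genuinely substantive step is the splitting property of $D=\bigoplus_\iota D_\iota$: in contrast with the $D\otimes_FL$ case, which is formal once $(v_D\otimes v_L)(e_\iota)=0$ is known, it rests on the graded-division-ring structure of $\gr(D)$ and $\gr(C)$ and on the distinctness of the induced automorphisms $\tilde\iota$ — which is exactly where the tameness of $L/F$ enters; the rest is routine manipulation of the separability idempotents.
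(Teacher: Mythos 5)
Your argument is correct, and on the two substantive points it takes a genuinely different route from the paper's. The first half agrees with the paper: the splitting of $D\otimes_FL=\bigoplus_{\iota}e_\iota(D\otimes 1)$ is formal once one knows $(v_D\otimes v_L)(e_\iota)=0$ and the $e_\iota$ are orthogonal idempotents, and the identification $\im\eta(e_\iota)=D_\iota$, $D=\bigoplus_\iota D_\iota$ is obtained exactly as in the paper (which uses $\dim_C D_\iota=1$ where you use Dedekind independence). The divergence is in how the splitting property of $D=\bigoplus_\iota D_\iota$ and the product formula are proved. The paper never leaves the tensor product: $e_\iota A=\bigoplus_\kappa e_\iota Ae_\kappa$ is automatically a splitting decomposition (right multiplication by idempotents of value $0$, via \eqref{idempotent} and \cite[Remark~2.6]{RTW}); the inclusion $e_\iota(D_{\kappa^{-1}\iota}\otimes1)\subseteq e_\iota Ae_\kappa$ is proved through $\eta$ and upgraded to equality by a direct-sum count; and the splitting is then transported back to $D$ through the norm-preserving bijection $\rho_\iota\colon d\mapsto e_\iota(d\otimes1)$ — so the product formula and the splitting of $D$ come in one stroke. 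You instead argue intrinsically in $\gr(D)$: choosing units $s_\iota\in D_\iota$, you get $\gr(D)=\bigoplus_\iota\tilde s_\iota\gr(C)$ from a graded Dedekind-independence argument plus the count $[\gr(D):\gr(C)]=[D:C]=|G|$, and then invoke \cite[Remark~2.6]{RTW}. This works, but it buys the conclusion at the cost of an ingredient the paper never needs: the pairwise distinctness of the induced automorphisms $\tilde\iota$ of $\gr(L)$, i.e.\ the classical fact that the ramification subgroup of the tame Galois extension $L/F$ is trivial (your reduction to the Henselian case is the right justification, but it is imported from ramification theory rather than from anything in this paper); in the paper tameness enters only through $(v_L\otimes v_L)(e)=0$ of Lemma~\ref{lem:vv}. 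Two smaller steps you assert without proof are standard but deserve a line: that $v_D\otimes v_L$ restricts to $v_L\otimes v_L$ on $L\otimes_FL$ (avoidable: get $(v_D\otimes v_L)(e_\iota)=0$ directly from $e_\iota=\sum_i\ell_i\otimes\iota(\ell_i^\sharp)$ together with idempotency), and that $[\gr(D):\gr(C)]=[D:C]$ (it follows from $[\gr(D):\gr(F)]=[D:F]$ and the inequalities $[\gr(D):\gr(C)]\le[D:C]$, $[\gr(C):\gr(F)]\le[C:F]$). Finally, your derivation of $e_\iota(D\otimes_FL)e_\kappa=e_\iota(D_{\kappa^{-1}\iota}\otimes1)$ by computing $(1\otimes\ell)\,e_\iota(d\otimes1)e_\kappa$ two ways and killing off-diagonal pieces with the unit $1\otimes[\ell-\kappa\tau\iota^{-1}(\ell)]$ is a clean, self-contained alternative to the paper's passage through $\End_C(D)$; note the logical order is reversed — the paper needs that formula to obtain the splitting of $D$, whereas you establish $D=\bigoplus_\tau D_\tau$ first and use only the direct sum here.
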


\begin{proof}
Let $A = D\otimes_F L$ and $\al = v_D\otimes v_L$.
Since $(e_\iota)_{\iota\in G}$ is a family of  orthogonal idempotents
     with $\sum_{\iota\in G}e_\iota=1$ and $\al(e_\iota) = 0$
for each $\iota$, the collection  $(\tilde e_\iota)_{\iota \in G}$
is a family of orthogonal idempotents in $\gr(A)$ with
$\sum_{\iota\in G}\tilde e_\iota=\tilde 1$. Hence,
using \eqref{idempotent},
\begin{equation}\label{direct sum}
A \, = \,  \textstyle \bigoplus\limits_{\iota \in G}e_\iota A
\quad\text{and}\quad \gr(A)  \, = \,
\bigoplus\limits_{\iota\in G} \tilde e_\iota  \gr(A) \, = \,
\bigoplus\limits_{\iota\in G}\gr(e_\iota A).
\end{equation}
Likewise, for any $\iota\in G$,
\begin{equation}\label{iotadirect sum}
e_\iota A \, = \,  \textstyle \bigoplus\limits_{\kappa \in G}e_\iota A e_\kappa
\quad\text{and}\quad \gr(e_\iota A)  \, = \,
\bigoplus\limits_{\kappa\in G} \gr(e_\iota A)\tilde e_\kappa \, = \,
\bigoplus\limits_{\kappa\in G}\gr(e_\iota A e_\kappa).
\end{equation}
In view of \eqref{eq:sepidi}, we have $e_\iota\cdot(1\otimes
     L)=e_\iota\cdot(L\otimes1)$, hence $e_\iota\cdot(D\otimes_FL)=
     e_\iota\cdot(D\otimes1)$.
For any nonzero $x\in D$,  $\al(x\otimes 1) = v_D(x)$ and
$\al\big((x\otimes 1)^{-1}\big) = \al(x^{-1}\otimes 1) = -v_D(x)$.
So, $x\otimes 1$ is $\al$-stable, and
\eqref{stable}~applies.  Since \eqref{direct sum}
shows that the direct sum $A =\bigoplus_{\iota \in G}e_\iota A$ is a
splitting decomposition of $A$ for~$\al$, it follows using \eqref{stable}
that for any $x_\iota\in D$,
$$
\al\big(\tsum\limits_{\iota \in G}e_\iota\cdot (x_\iota\otimes 1)\big)
   \ = \ \min\limits_{\iota\in G}\big(\al(e_\iota\cdot (x_\iota\otimes 1))\big)
\ = \ \min\limits_{\iota \in G}\big(\al(e_\iota) + \al(x_\iota \otimes 1)\big)
   \ = \ \min\limits_{\iota \in G}\big(v_D(x_\iota)\big).
$$

To prove the rest, we use the canonical isomorphism $\eta$ of
\eqref{eq:canisom}. For each $\iota \in G$, let $\pi_\iota = \eta(e_\iota)$,
which is a projection in $\End_C(D)$.
By \eqref{eq:sepidi} and the commutativity of $L\otimes_FL$,
for any $\ell \in L$ and $d\in D$,
$$
\ell \cdot\pi_\iota(d) \ = \
\eta\big((\ell \otimes 1) e_\iota\big)(d) \
= \ \eta\big((1\otimes \iota(\ell))e_\iota\big)(d)   \ = \
\pi_\iota(d) \cdot \iota(\ell) .
$$
Hence, $\pi_\iota(D)\subseteq D_\iota$.  Since
$\im(\pi_\iota)$ is a nonzero $C$-subspace of the
$1$-dimensional $C$-vector space $D_\iota$, it follows that
$\im(\pi_\iota) = D_\iota$.
Because $(\pi_\iota)_{\iota\in
    G}$ is a family of orthogonal idempotents of $\End_C(D)$ such that
$\sum_{\iota\in G}\pi_\iota=\id_D$, we have
$D=\bigoplus_{\iota\in G}D_\iota$;
furthermore,  each $\pi_\iota$ is the
projection of $D$ onto $D_\iota$ parallel to
$\bigoplus_{\kappa\neq\iota}D_\kappa$.
Thus, for any $\iota, \kappa \in G$,
$\pi_\iota\End_C(D)\pi_\kappa$
consists of those
endomorphisms sending $D_\kappa$ to $D_\iota$ and $D_\lambda$ to $\{0\}$
for $\lambda \ne \kappa$.
  For any $\lambda \in G$, since
$D_{\kappa^{-1}\iota} D_\lambda \subseteq
D_{\lambda\kappa^{-1}\iota}$, we have
$$
[\pi_\iota \circ\eta(D_{\kappa^{-1}\iota}\otimes 1)](D_\lambda)
  \ = \ \pi_\iota(D_{\kappa^{-1}\iota} D_\lambda) \
\subseteq  \ \pi_\iota(D_{\lambda\kappa^{-1}\iota}) \
\subseteq  \ \begin{cases}  \ D_\iota, &\text{if } \lambda = \kappa;\\
  \ \{0\}, &\text{if }\lambda \ne \kappa.
\end{cases}
$$
Hence, $\pi_\iota \circ\eta(D_{\kappa^{-1}\iota}\otimes 1)
\subseteq \pi_\iota\End_C(D)\pi_\kappa$.
By applying $\eta^{-1}$, this yields
\begin{equation}\label{inclusion}
e_\iota(D_{\kappa^{-1}\iota}\otimes 1) \,\subseteq  \,
e_\iota A e_\kappa \quad\text {for all } \iota, \kappa \in G.
\end{equation}

Now, fix $\iota \in G$.  We have seen that $e_\iota A = 
e_\iota(D\otimes 1)$.  The $F$-epimorphism $\rho_\iota\colon D \to 
e_\iota A$ given by $d\mapsto e_\iota(d\otimes 1)$ is clearly
injective; $\rho_\iota$ is also norm-preserving, as $\alpha(e_\iota) = 0$
and $d\otimes 1$ is stable in $A$ for each nonzero $d\in D$.
Since $D = \bigoplus_{\kappa \in G}D_{\kappa^{-1}\iota}$, we have
\begin{equation}\label{eAe}
\textstyle \bigoplus \limits_{\kappa \in G} e_\iota  A e_\kappa
  \ = \ e_\iota A \ = \ \rho_\iota(D) \ = \
\bigoplus\limits _{\kappa \in G}\rho_\iota(D_{\kappa^{-1}\iota}) \ = 
\ \bigoplus\limits _{\kappa \in G}
e_\iota(D_{\kappa^{-1}\iota}\otimes 1).
\end{equation}
This shows that the inclusions in \eqref{inclusion} must
all be equalities.  It follows from \eqref{iotadirect sum} above that 
the direct sum decomposition
$\bigoplus _{\kappa \in G} e_\iota  A e_\kappa$ is a splitting
decomposition of $e_\iota A$.  Therefore, by applying the 
norm-preserving map $\rho_\iota^{-1}$
to the terms in \eqref{eAe}, it follows that
$\bigoplus_{\kappa \in G} D_\kappa$ is a splitting
decomposition of~$D$.
\end{proof}

While $D\otimes_F L$ is simple, the degree $0$ part
$(D\otimes_F L)_0$ of $\gr(D\otimes_F L)$ is in general only
semisimple.  The value sets $\Gamma_{D_\iota}$ of the $D_\iota$
encode how $(D\otimes_F L)_0$ decomposes:  Since each $D_\iota$ is a
$1$-dimensional $C$-subspace of $D$ and $v_D|_{D_\iota}$
is a $v_D|_C$-norm on $D_\iota$, each $\Gamma_{D_\iota}$ is a
coset of $\Gamma_C$ in $\Gamma_D$.
Therefore, there is a well-defined map
$$
\psi\!\colon G \to \Gamma_D\big/\Gamma_C
\quad \text{given by} \quad \psi(\iota) \, =\,  \Gamma_{D_\iota}.
$$
Because $D_\iota\cdot D_\kappa \subseteq D_{\kappa \iota}$
and $\Gamma_D$ is abelian, $\psi$ is a group homomorphism,
which is surjective since ${D = \bigoplus_{\iota\in G} D_\iota}$
is a splitting decomposition of $D$ by Lemma~\ref{prop:dirsum}.
So, ${|\ker(\psi)| = |G|\big/ \IND {\Gamma_D}{\Gamma_C}
= \DIM DC\big/\IND {\Gamma_D}{\Gamma_C}}$, which shows that
\begin{equation}\label{injcond}
\psi\text{ is injective if and only if $D$ is totally
ramified over $C$.}
\end{equation}

\begin{lemma}\label{etildeinfo}
Let $A = D \otimes _F L$.  Then,
each $\tilde e_\iota$ is a primitive idempotent of $A_0$,
and \break $A_0 = \bigoplus\limits _{\iota \in G}\tilde e_\iota (A_0)
=\bigoplus\limits_{\iota \in G}\tilde e_\iota(D_0\otimes 1)$.
For any $\iota, \kappa \in G$,
$$
\tilde e_\iota A_0\tilde e_\kappa \ = \ \begin{cases}\
\tilde e_\iota\big((D_{\kappa^{-1}\iota})_0 \otimes 1\big) \ne 0,
& \text{if } \psi(\kappa ) = \psi(\iota);\\
   \  \ \ \ \ \ \ \ \ \  \ 0 ,&\text{if } \psi(\kappa) \ne \psi(\iota).
\end{cases}
$$
\end{lemma}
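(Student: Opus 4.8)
The plan is to transfer the two splitting decompositions of Lemma~\ref{prop:dirsum} to the associated graded ring $\gr(A)$, where $A = D\otimes_F L$ and $\alpha = v_D\otimes v_L$, and then to extract the degree-$0$ parts. Recall that $\alpha(e_\iota) = 0$, so each $\tilde e_\iota$ is a homogeneous idempotent of $\gr(A)$ of degree~$0$; hence multiplication by $\tilde e_\iota$ preserves degrees, and taking degree-$0$ parts is compatible with the graded direct-sum decompositions $\gr(A) = \bigoplus_\iota \gr(e_\iota A)$ of \eqref{direct sum} and $\gr(e_\iota A) = \bigoplus_\kappa \gr(e_\iota A e_\kappa)$ of \eqref{iotadirect sum}, as well as with the identities of \eqref{idempotent}. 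This degree-bookkeeping is the only delicate point; the rest is formal.

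For the decomposition of $A_0$, I would take degree-$0$ parts in $\gr(A) = \bigoplus_\iota \gr(e_\iota A) = \bigoplus_\iota \tilde e_\iota \gr(A)$ to get $A_0 = \bigoplus_\iota \tilde e_\iota A_0$. Since $e_\iota A = e_\iota(D\otimes 1)$ (proof of Lemma~\ref{prop:dirsum}), \eqref{idempotent} with $N = D\otimes 1$ gives $\gr(e_\iota A) = \tilde e_\iota\gr(D\otimes 1)$, and as $d\mapsto d\otimes 1$ is value-preserving with each nonzero $d\otimes 1$ being $\alpha$-stable, the canonical graded isomorphism $\gr(D)\iso\gr(D\otimes 1)$ identifies $(D\otimes 1)_0$ with $D_0\otimes 1$; hence $\tilde e_\iota A_0 = \tilde e_\iota(D_0\otimes 1)$. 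This yields $A_0 = \bigoplus_\iota\tilde e_\iota(A_0) = \bigoplus_\iota\tilde e_\iota(D_0\otimes 1)$.

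For $\tilde e_\iota A_0\tilde e_\kappa$ I would run the same computation one level down. By Lemma~\ref{prop:dirsum} we have $e_\iota A e_\kappa = e_\iota(D_{\kappa^{-1}\iota}\otimes 1)$, so applying $\gr$, then \eqref{idempotent}, then taking degree-$0$ parts gives $\tilde e_\iota A_0\tilde e_\kappa = (\gr(e_\iota A e_\kappa))_0 = \tilde e_\iota\bigl((D_{\kappa^{-1}\iota})_0\otimes 1\bigr)$. Since $D_{\kappa^{-1}\iota}$ is a $1$-dimensional $C$-subspace of $D$ and $v_D$ restricts there to a $v_D|_C$-norm, $\Gamma_{D_{\kappa^{-1}\iota}}$ is the coset $\psi(\kappa^{-1}\iota)$ of $\Gamma_C$ in $\Gamma_D$; thus $(D_{\kappa^{-1}\iota})_0\neq 0$ iff $0\in\psi(\kappa^{-1}\iota)$, i.e.\ $\psi(\kappa^{-1}\iota) = \Gamma_C$, i.e.\ (since $\psi$ is a homomorphism) $\psi(\iota) = \psi(\kappa)$, and in that case $\tilde e_\iota\bigl((D_{\kappa^{-1}\iota})_0\otimes 1\bigr)\neq 0$ because $d\mapsto e_\iota(d\otimes 1)$ is injective and value-preserving (proof of Lemma~\ref{prop:dirsum}); this is the displayed formula. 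Finally, taking $\kappa = \iota$ and using $D_{\id} = C = C_D(L)$ gives $\tilde e_\iota A_0\tilde e_\iota = \tilde e_\iota(C_0\otimes 1)$. As $C$ is a division algebra and $v_D$ a valuation on $D$, $\gr(C)$ is a graded division ring, so $C_0$ is a division ring; and the map $\tilde c\mapsto\tilde e_\iota(\tilde c\otimes 1)$ is a surjective ring homomorphism $C_0\to\tilde e_\iota A_0\tilde e_\iota$, surjective by the formula just obtained and multiplicative because $C$ centralizes $L$ in $D$, whence $C\otimes 1$ centralizes $L\otimes_F L\ni e_\iota$ and so $\tilde e_\iota$ commutes with $\tilde c\otimes 1$. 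A nonzero ring-theoretic image of a division ring is a division ring, so $\tilde e_\iota A_0\tilde e_\iota\cong C_0$ has no idempotents besides $0$ and its identity element $\tilde e_\iota$; therefore $\tilde e_\iota$ is a primitive idempotent of $A_0$.
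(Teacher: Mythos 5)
Your proof is correct and follows essentially the same route as the paper's: transfer the decompositions of Lemma~\ref{prop:dirsum} to $\gr(A)$ via \eqref{idempotent} and \eqref{stable}, take degree-$0$ components, decide vanishing of $(D_{\kappa^{-1}\iota})_0$ by the coset $\psi(\kappa^{-1}\iota)$ of $\Gamma_C$, and identify $\tilde e_\iota A_0\tilde e_\iota\cong C_0$ (a division ring) to conclude primitivity. The only cosmetic difference is that for $\tilde e_\iota A_0=\tilde e_\iota(D_0\otimes 1)$ you apply $\gr$ to $e_\iota A=e_\iota(D\otimes 1)$, while the paper invokes \eqref{eq:tildesepidi} directly in $\gr(D)\otimes_{\gr(F)}\gr(L)$; both are equivalent.
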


\begin{proof}
We saw in \eqref{direct sum} that $\gr(A) = \bigoplus_{\iota \in
G}\tilde e_\iota \gr(A)$.  Moreover, as
$\gr(A) = \gr(D) \otimes_{\gr(F)}\gr(L)$ and
$\tilde e_\iota\big(1\otimes \gr(L)\big)=
\tilde e_\iota\big(\gr(L)
\otimes 1\big)$ by \eqref{eq:tildesepidi}, we have
$\tilde e_\iota \gr(A) =
\tilde e_\iota\big(\gr(D) \otimes 1 \big)$.  So, for the
degree $0$ components we have $A_0 = \bigoplus_{\iota\in G}
\tilde e_\iota (A_0) = \bigoplus_{\iota \in G}
\tilde e_\iota\big(D_0\otimes 1\big)$.  Similarly,
for $\iota, \kappa\in G$, by \eqref{idempotent} and
Lemma~\ref{prop:dirsum}
$$
\tilde e_\iota \gr(A) \tilde e_\kappa\ = \
\gr(e_\iota A e_\kappa) \ = \ \gr\big(e_\iota
(D_{\kappa^{-1}\iota}\otimes 1)\big)  \
= \ \tilde e_\iota\gr(D_{\kappa^{-1}\iota}\otimes 1)
   \ = \ \tilde e_\iota\big(\gr(D_{\kappa^{-1}\iota}) \otimes 1\big).
$$
Hence, for the degree $0$ components,
$$
\tilde e_\iota (A_0)\tilde e_\kappa \ = \ \tilde e_\iota
\big((D_{\kappa^{-1}\iota})_0 \otimes 1\big).
$$
If $\psi(\kappa) \ne \psi(\iota)$, then
$\psi(\kappa^{-1}\iota)$ is a nonzero element of
$\Gamma_D\big/\Gamma_C$, so $(D_{\kappa^{-1}\iota})_0 = \{0\}$.  If
$\psi(\kappa) = \psi(\iota)$, then $(D_{\kappa^{-1}\iota})_0
\ne 0$, and since nonzero elements of $D_{\kappa^{-1}\iota}
\otimes 1$ are stable, \eqref{stable} yields
$\tilde e_\iota \big((D_{\kappa^{-1}\iota})_0\otimes 1\big)
\ne \{0\}$.  If $\kappa = \iota$, then $(D_{\kappa^{-1}\iota})_0
\otimes 1 = C_0\otimes 1$, so
$\tilde e_\iota (A_0) \tilde e_\iota = \tilde e_\iota
\big(C_0\otimes 1\big)$.   Since $C\otimes 1$ centralizes
$e_\iota \in L\otimes _F L$, $C_0\otimes 1$~centralizes
$\tilde e_\iota$.  Hence, $\tilde e_\iota(A_0)\tilde e_\iota
\cong C_0\otimes 1 \cong C_0$. Since $C_0$ is a division ring,
$\tilde e_\iota$ is a primitive idempotent of $A_0$.
\end{proof}

Now, assume $\sigma$ is an $F$-linear involution on $D$ which
stabilizes $L$, and therefore restricts to an automorphism $\sigma_L$
of $L$, and let $\iota\in G$ be such that $\iota^2=\id$. Then
$\sigma\otimes\iota$ is an involution on $D\otimes_FL$. Since the
valuation $v_D$ extending $v$ to $D$ is unique by
\cite[Th.]{W}, it is
invariant under $\sigma$. Likewise, $v_L$~is invariant under $\iota$,
hence $v_D\otimes v_L$ is invariant under $\sigma\otimes\iota$ by
Prop.~\ref{prop:tensprod}.

\begin{proposition}
     \label{prop:isotcrit}
     The involution $\sigma\otimes\iota$ on $D\otimes_FL$ is isotropic
     unless $\sigma_L=\iota$ and $\iota$ lies in the center
$Z(G)$ of $G$. If
     $\sigma_L=\iota\in Z(G)$ and
$D$ is totally ramified over $C_D(L)$,
then   $\sigma\otimes\iota$ is anisotropic.
\end{proposition}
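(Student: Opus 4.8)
The strategy is to work in the decomposition $D\otimes_FL=\bigoplus_{\iota\in G}e_\iota(D\otimes1)$ from Lemma~\ref{prop:dirsum}, and to track how the involution $\sigma\otimes\iota$ permutes the primitive idempotents $e_\kappa$. First I would compute $(\sigma\otimes\iota)(e_\kappa)$. Applying $\sigma\otimes\iota$ to the defining relation \eqref{eq:sepidi} and using that $(\sigma\otimes\iota)(x\otimes1)=1\otimes\iota(x)$ while $(\sigma\otimes\iota)(1\otimes\kappa(x))=\sigma_L\kappa(x)\otimes1$ (here I use that $\sigma$ stabilizes $L$, so $\sigma(1\otimes\kappa(x))$ makes sense as $\sigma_L\kappa(x)\otimes1$), a short computation shows $(\sigma\otimes\iota)(e_\kappa)=e_{\iota\kappa^{-1}\sigma_L^{-1}}$, or some such conjugate; the precise index is found by matching against the characterization of the $e_\mu$ by \eqref{eq:sepidi}. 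In particular $(\sigma\otimes\iota)(e_{\id})=e_{\iota\sigma_L^{-1}}$.

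Next, the isotropy criterion: if $(\sigma\otimes\iota)(e_\kappa)\ne e_\kappa$ for some $\kappa$, then setting $e=e_\kappa$ and $e'=(\sigma\otimes\iota)(e_\kappa)$ we have distinct orthogonal idempotents with $ee'=0$; then for the element $d\otimes1$ with $d$ a nonzero element of $D_{\kappa^{-1}}$ (so that $e_\kappa(d\otimes1)\ne0$ by Lemma~\ref{prop:dirsum}), one checks that $(\sigma\otimes\iota)\bigl(e_\kappa(d\otimes1)\bigr)$ lands in $e'(D\otimes1)$, which is orthogonal to $e_\kappa(D\otimes1)$ on the relevant side, producing a nonzero $x$ with $(\sigma\otimes\iota)(x)x=0$; hence $\sigma\otimes\iota$ is isotropic. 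Thus anisotropy forces $(\sigma\otimes\iota)(e_\kappa)=e_\kappa$ for every $\kappa\in G$, and by the index computation this means $\iota\kappa^{-1}\sigma_L^{-1}=\kappa$ for all $\kappa$ (up to the appropriate reindexing), which forces $\sigma_L=\iota$ and $\iota\in Z(G)$. (Actually a cleaner route: the fixed condition on $e_{\id}$ gives $\iota\sigma_L^{-1}=\id$, i.e. $\sigma_L=\iota$; then the fixed condition on a general $e_\kappa$ reduces to $\iota\kappa^{-1}\iota=\kappa$, i.e. $\iota$ is central.) This proves the first assertion.

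For the converse, assume $\sigma_L=\iota\in Z(G)$ and that $D$ is totally ramified over $C=C_D(L)$. By Cor.~\ref{anisot.cor} and Remark~\ref{anisot.rem}(1) it suffices to show that the residue involution $(\sigma\otimes\iota)_0$ on $(D\otimes_FL)_0$ is anisotropic, and since $v_D$ is the unique valuation on $D$ extending $v$, it is invariant under $\sigma$, hence $v_D\otimes v_L$ is invariant under $\sigma\otimes\iota$ and everything passes to $\gr$. Total ramification of $D$ over $C$ means, by \eqref{injcond}, that the homomorphism $\psi\colon G\to\Gamma_D/\Gamma_C$ is injective; so by Lemma~\ref{etildeinfo}, $\tilde e_\iota(A_0)\tilde e_\kappa=0$ whenever $\psi(\iota)\ne\psi(\kappa)$, i.e. whenever $\iota\ne\kappa$. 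Therefore $A_0=\bigoplus_{\iota\in G}\tilde e_\iota(A_0)\tilde e_\iota\cong\prod_{\iota\in G}C_0$ is a product of division rings indexed by $G$, and $(\sigma\otimes\iota)_0$ (equivalently $\widetilde{\sigma\otimes\iota}$ restricted to degree $0$) permutes the blocks according to the action $\kappa\mapsto(\sigma\otimes\iota)(e_\kappa)$, which is the identity permutation by the hypothesis $\sigma_L=\iota\in Z(G)$. So $(\sigma\otimes\iota)_0$ restricts to an involution $\tau_\kappa$ on each block $\tilde e_\kappa(A_0)\tilde e_\kappa\cong C_0$; to finish I must check each $\tau_\kappa$ is anisotropic, which for an involution on a division ring is automatic. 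Hence $(\sigma\otimes\iota)_0$ is anisotropic by the remark recalled just before Cor.~\ref{anisot.cor}, and therefore so is $\sigma\otimes\iota$.

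**Main obstacle.** The delicate point is the bookkeeping in the index computation $(\sigma\otimes\iota)(e_\kappa)=e_{(\,\cdot\,)}$: one must be careful that $\sigma$ restricted to the copy $1\otimes L$ is genuinely $\sigma_L$ transported via the swap, and that the reindexing of primitive idempotents by \eqref{eq:sepidi} is done consistently (left versus right multiplication, $\iota$ versus $\iota^{-1}$), since $\iota^2=\id$ makes several of these coincide and it is easy to lose track of which group-theoretic identity ($\sigma_L=\iota$ and centrality) is actually being extracted. The other point requiring a little care is verifying that when some $e_\kappa$ is not fixed one really does get a genuine isotropic vector for $\sigma\otimes\iota$ (and not merely for a one-sided zero-divisor condition); this is where one uses Lemma~\ref{prop:dirsum} to guarantee $e_\kappa(D_{\kappa^{-1}}\otimes1)\ne0$ and the orthogonality $e_\kappa e'=0$ with $e'=(\sigma\otimes\iota)(e_\kappa)$.
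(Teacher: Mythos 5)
Your proposal follows the paper's proof essentially step for step: compute the action of $\sigma\otimes\iota$ on the primitive idempotents $e_\kappa$, obtain isotropy as soon as some $e_\kappa$ is not fixed, and, in the totally ramified case, use the injectivity of $\psi$ from \eqref{injcond} together with Lemma~\ref{etildeinfo} to see that $(D\otimes_FL)_0$ is a direct sum $\bigoplus_{\kappa\in G}\tilde e_\kappa (D\otimes_FL)_0\tilde e_\kappa$ of division rings, each preserved by $(\sigma\otimes\iota)_0$, and then conclude by Cor.~\ref{anisot.cor} and Remark~\ref{anisot.rem}(1). The second half of your argument is exactly the paper's and is fine.

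The one point that needs repair is precisely the index bookkeeping you flagged as delicate. The correct formula is $(\sigma\otimes\iota)(e_\kappa)=e_{\iota\kappa\sigma_L}$; the paper gets it by writing $(\sigma_L\otimes\iota)(e_\kappa)=(\sigma_L\otimes\iota\kappa)(e)=(\id_L\otimes\iota\kappa\sigma_L)\circ(\sigma_L\otimes\sigma_L)(e)$ and using $(\sigma_L\otimes\sigma_L)(e)=e$ from Lemma~\ref{lem:vv} (one can also, as you suggest, apply $\sigma\otimes\iota$ to \eqref{eq:sepidi} and match indices). Your tentative $e_{\iota\kappa^{-1}\sigma_L^{-1}}$ is not right, and the difference is not cosmetic: the fixed-point condition you display, $\iota\kappa^{-1}\iota=\kappa$ for all $\kappa$, says that conjugation by $\iota$ inverts every element of $G$, which is not the assertion $\iota\in Z(G)$; with the correct index the condition $\iota\kappa\sigma_L=\kappa$ gives $\sigma_L=\iota$ at $\kappa=\id$ and then $\iota\kappa\iota=\kappa$ for all $\kappa$, which (since $\iota^2=\id$) is exactly centrality. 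Also, once some $e_\kappa$ is not fixed you may simply take $x=e_\kappa$ as the isotropy witness, since $(\sigma\otimes\iota)(e_\kappa)\,e_\kappa=e_{\iota\kappa\sigma_L}\,e_\kappa=0$ by orthogonality; the auxiliary element $e_\kappa(d\otimes1)$ with $d\in D_{\kappa^{-1}}$ is unnecessary (though harmless, because the product still contains the vanishing factor $e'e_\kappa$).
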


\begin{proof}
     For $\kappa\in G$ we have
     \[
     (\sigma_L\otimes\iota)(e_\kappa)=(\sigma_L\otimes\iota\kappa)(e)
     =(\id_L\otimes\iota\kappa\sigma_L)\circ(\sigma_L\otimes\sigma_L)(e).
     \]
     Since $(\sigma_L\otimes\sigma_L)(e)=e$ by Lemma~\ref{lem:vv}, it
     follows that
     \[
     (\sigma_L\otimes\iota)(e_\kappa)=e_{\iota\kappa\sigma_L}.
     \]
     If $\iota\neq\sigma_L$ or if $\iota=\sigma_L$
and $\iota\notin Z(G)$, we may find $\kappa\in G$ such that
     $\iota\kappa\sigma_L\neq\kappa$, hence
     \[
     (\sigma\otimes\iota)(e_\kappa)\cdot
     e_\kappa=e_{\iota\kappa\sigma_L}\cdot e_\kappa=0.
     \]
     Therefore, $\sigma\otimes\iota$ is isotropic.

Now assume $\sigma_L=\iota$ and $\iota\in Z(G)$.
     So, $(\sigma\otimes\iota)(e_\kappa)=e_\kappa$ for all $\kappa\in
G$;  hence, in $(D\otimes_F L)_0$,
${(\sigma\otimes \iota)_0(\tilde e_\kappa) = \tilde e_\kappa}$.
Assume further that $D$ is totally ramified over $C = C_D(L)$.
Then $\psi$ is injective by~\eqref{injcond}, so by
Lemma~\ref{etildeinfo}, $\tilde e_\iota(D\otimes _F L)_0\,\tilde e_\kappa
= 0 $ whenever $\kappa \ne \iota$.  Hence,
${(D\otimes _F L)_0 = \bigoplus_{\iota \in G}
\tilde e_\iota(D\otimes _F L)_0\,\tilde e_\iota}$.  Since
$(\sigma\otimes \iota)_0$ maps each direct summand to itself
and each summand is a division ring, $(\sigma\otimes \iota)_0$
is anisotropic.  It follows from
     Cor.~\ref{anisot.cor} (see also Remark~\ref{anisot.rem}(1)) that
     $\sigma\otimes\iota$ is anisotropic.
\end{proof}

\begin{corollary}
     \label{cor:isotcrit}
     Let $D$ be a central division algebra over a field $F$. Assume $v$
     is a valuation on $F$ which extends to a valuation on $D$ so that
     $D$ is tame over $F$. Let $\sigma$ be an involution of
     the first kind on $D$ and let $L\subseteq D$ be a subfield
     Galois over $F$, consisting of $\sigma$-symmetric elements.
If $D$ is totally ramified over $C_D(L)$, then the
     involution $\sigma\otimes\id_L$ on $D\otimes_FL$ is anisotropic.
\end{corollary}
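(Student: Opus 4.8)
The plan is to read off the corollary from Proposition~\ref{prop:isotcrit} with $\iota=\id_L$; essentially nothing new is needed, only a check that the running hypotheses of \S\ref{tens.sec} leading up to that proposition are in force here.

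First I would unwind the data of the corollary. Since $\sigma$ is of the first kind, $Z(D)=F$; since $L$ consists of $\sigma$-symmetric elements, $\sigma$ stabilizes $L$ and the induced automorphism is $\sigma_L=\sigma\rvert_L=\id_L$. Because $v$ extends to a valuation on $D$ at all, that extension $v_D$ is unique (by \cite[Th.]{W}), hence invariant under $\sigma$ (as $v_D\circ\sigma$ is again an extension of $v$ to $D$), and its restriction $v_L=v_D\rvert_L$ is the unique extension of $v$ to $L$. Tameness of $D$ over $F$ makes $D$ defectless over $F$, so $v_D$ is a $v$-norm on $D$, and it implies that the subfield $L$ is tame over $F$; hence $(L,v_L)/(F,v)$ is a tame Galois extension. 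Therefore $D$, $L$, $v_D$, $v_L$, $\sigma$ are exactly in the situation under which Proposition~\ref{prop:isotcrit} and the lemmas it depends on (Lemmas~\ref{lem:vv}, \ref{prop:dirsum}, \ref{etildeinfo}) were established.

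Now I would apply Proposition~\ref{prop:isotcrit} with $\iota=\id_L$, which trivially satisfies $\iota^2=\id$. The conditions it requires in order to conclude anisotropy hold automatically: $\sigma_L=\id_L=\iota$, and $\iota=\id_L$ lies in the center $Z(G)$ of $G$. Since by assumption $D$ is totally ramified over $C_D(L)$, Proposition~\ref{prop:isotcrit} gives that $\sigma\otimes\id_L$ is anisotropic on $D\otimes_FL$, which is the assertion.

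The only point deserving a word of justification is the passage from ``$D$ tame over $F$'' to ``$L$ tame over $F$''; this is the sole mild obstacle, and it is standard (every subfield of a tame division algebra is tame, which one may also see from the fact that $v_D$ is a tame $v$-gauge on $D$ restricting to a $v$-norm on the $F$-subspace $L$). All the substantive content is already in Proposition~\ref{prop:isotcrit}: the decomposition $(D\otimes_F L)_0=\bigoplus_{\iota\in G}\tilde e_\iota\,(D\otimes_F L)_0\,\tilde e_\iota$ into division rings, which holds precisely because total ramification of $D$ over $C_D(L)$ forces the homomorphism $\psi$ to be injective (Lemma~\ref{etildeinfo}), and the passage from anisotropy of the residue involution $(\sigma\otimes\id_L)_0$ back to anisotropy of $\sigma\otimes\id_L$ via Corollary~\ref{anisot.cor} together with Remark~\ref{anisot.rem}(1), the latter being what dispenses with any Henselian assumption on $(F,v)$.
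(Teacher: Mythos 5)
Your proposal is correct and follows exactly the paper's route: the paper's own proof consists of the single line that the corollary is immediate from Prop.~\ref{prop:isotcrit}, applied (as you do) with $\iota=\id_L$, using $\sigma_L=\id_L$ and $\id_L\in Z(G)$. The extra checks you supply --- uniqueness of $v_D$ and hence its $\sigma$-invariance, uniqueness of $v_L=v_D\rvert_L$, and tameness/defectlessness of $L$ and $D$ over $F$ so that the standing hypotheses of \S\ref{tens.sec} apply --- are precisely the points the paper leaves implicit, and they are verified correctly.
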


\begin{proof}
This is immediate from Prop.~\ref{prop:isotcrit}.
\end{proof}

\begin{remarks}
(a) The assumption in  Cor.~\ref{cor:isotcrit} that
$D$~is totally ramified over $C_D(L)$ holds whenever
$D$~is totally ramified over $F$.  In this case we do not have to
assume that $L$ is Galois over $F$.  For,
since $v$ extends to $D$, it follows from a theorem of Morandi
     \cite{M} that $D$ remains a division ring after scalar extension to a
     Henselization $F_h$ of $F$ for $v$. Therefore, we may assume that $F$ is
     Henselian. The extension $L/F$ is then Galois, since it is tame and
     totally ramified.

(b) Another case in which $D$ is totally ramified over $C_D(L)$
occurs whenever the subfield $L$ of $D$ is unramified over $F$ and
$\ov L \subseteq Z(\ov D)$.

(c) Another way to obtain the information
about $(D\otimes _FL)_0$ needed in the proof of
Prop.~\ref{prop:isotcrit} is to prove that if
the $F$-central division ring $D$ has a valuation
tame over $F$ and $L$ is any subfield of $D$
containing $F$, and $C = C_D(L)$,
then the canonical isomorphism $D\otimes _F L \cong
\End_C(D)$ is norm-preserving; so this induces
a graded isomorphism $\gr(D\otimes _F L)
\cong \gr\big(\End_C(D)\big) \cong \grEnd
_{\gr(C)}\big(\gr(D)\big)$.
\end{remarks}

\section{Composition of value functions}
\label{comp.sec}

Let $v\colon F\to\Gamma\cup\{\infty\}$ be a valuation on a field $F$,
and let $\Delta\subset\Gamma$ be a convex subgroup, i.e., if
$0\le \gamma\le \delta $ with $\gamma\in \Gamma$ and $\delta \in \Delta$,
then $\gamma\in \Delta$.  Let $\Lambda = \Gamma/\Delta$, and let
$\varepsilon\colon \Gamma \to \Lambda$ be the canonical map.
The ordering on $\Gamma$ induces a
total ordering on $\Lambda$ such that for $\gamma_1, \gamma_2\in \Gamma$, if
$\gamma_1 \le \gamma_2$, then $\varepsilon(\gamma_1) \le
\varepsilon(\gamma_2)$.
Consequently,
\begin{equation}\label{ordinequality}
\text {if }  \ \varepsilon(\gamma_2) \, < \, \varepsilon(\gamma_1),
\quad \text { then }  \ \gamma_2  \, < \,  \gamma_1.
\end{equation}
Because $\Gamma$ is assumed to be divisible, $\Delta$ and $\Lambda$
are also divisible.
     By composing $v$ with $\varepsilon$, we obtain a coarser valuation on
$F$,
\[
w\, =\, \varepsilon\circ v\colon F\to\Lambda\cup\{\infty\}.
\]
Let $\overline{F}^{v}$ (resp.\ $\overline{F}^w$) denote the residue
field of $F$ for the valuation $v$ (resp.\ $w$). The valuation $v$
induces a valuation
\[
u\colon\overline{F}^w\to\Delta\cup\{\infty\},
\]
with residue field
\[
\overline{\overline{F}^w}^u =\, \overline{F}^v,
\]
see \cite[pp.~44--45]{EP}.

Now, let $V$ be an $F$-vector space and let $\alpha\colon
V\to\Gamma\cup\{\infty\}$ be a $v$-value function. Composition with~
$\varepsilon$ yields a
$w$-value function
\[
\beta\, = \, \varepsilon\circ\alpha\colon V\to\Lambda\cup\{\infty\}.
\]
Each $\lambda\in\Lambda=\Gamma/\Delta$ is a coset of $\Delta$, and
may therefore be viewed as a subset of $\Gamma$. For $x\in V$, we have
by definition
\[
\beta(x)\, = \, \lambda\in\Lambda\quad\text{if and only
     if}\quad\alpha(x)\in\lambda\subset\Gamma.
\]
For $\lambda\in\Lambda$, let
\[
V^{\beta\geq\lambda} \, = \, \{x\in V\mid \beta(x)\geq\lambda\},\quad
V^{\beta>\lambda} \, = \, \{x\in V\mid \beta(x)>\lambda\},\quad
V^\beta_\lambda \, = \, V^{\beta\geq\lambda}/V^{\beta>\lambda}.
\]
The group $V^\beta_\lambda$ is an $\overline{F}^w$-vector space.

\begin{lemma}
     If $x$, $y\in V^{\beta\geq\lambda}$ satisfy $x\equiv
     y\not\equiv0\bmod V^{\beta>\lambda}$, then $\alpha(x)=\alpha(y)$.
\end{lemma}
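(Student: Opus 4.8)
The plan is a short argument by contradiction, built on the elementary ultrametric property of $v$-value functions. First I would unwind the hypotheses: since $x$, $y\in V^{\beta\geq\lambda}$ but $x\equiv y\not\equiv0\bmod V^{\beta>\lambda}$, neither $x$ nor $y$ lies in $V^{\beta>\lambda}$, so $\beta(x)=\beta(y)=\lambda$, while $x-y\in V^{\beta>\lambda}$ gives $\beta(x-y)>\lambda$. Recall also that $\beta=\varepsilon\circ\alpha$, so these statements say $\varepsilon(\alpha(x))=\varepsilon(\alpha(y))=\lambda$ and $\varepsilon(\alpha(x-y))>\lambda$.

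Next I would invoke the standard fact that any $v$-value function $\alpha$ satisfies $\alpha(a-b)=\min\bigl(\alpha(a),\alpha(b)\bigr)$ whenever $\alpha(a)\neq\alpha(b)$. This follows from axiom~(ii) together with $\alpha(-b)=\alpha(b)$ (a consequence of~(iii), as $v(-1)=0$): if, say, $\alpha(a)<\alpha(b)$, then $\alpha(a-b)\geq\alpha(a)$, and $\alpha(a)=\alpha\bigl((a-b)+b\bigr)\geq\min\bigl(\alpha(a-b),\alpha(b)\bigr)$ forces $\alpha(a-b)\leq\alpha(a)$, hence equality.

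Now suppose, for contradiction, that $\alpha(x)\neq\alpha(y)$. Then by the fact just recalled, $\alpha(x-y)=\min\bigl(\alpha(x),\alpha(y)\bigr)$, which is one of the two values $\alpha(x)$, $\alpha(y)$. Applying $\varepsilon$ and using that $\varepsilon(\alpha(x))=\varepsilon(\alpha(y))=\lambda$, we get $\beta(x-y)=\varepsilon\bigl(\alpha(x-y)\bigr)=\lambda$, contradicting $\beta(x-y)>\lambda$. Therefore $\alpha(x)=\alpha(y)$, as claimed.

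There is essentially no serious obstacle here; the only point requiring a little care is the passage through $\varepsilon$, namely that $\varepsilon$ is an order-preserving group homomorphism (so that $\min$ commutes with it on elements having equal image) — this is exactly what was set up in the paragraph introducing $\varepsilon\colon\Gamma\to\Lambda$ and inequality~\eqref{ordinequality}.
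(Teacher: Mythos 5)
Your proof is correct and uses the same ingredients as the paper's: the ultrametric equality $\alpha(a-b)=\min(\alpha(a),\alpha(b))$ when $\alpha(a)\neq\alpha(b)$, together with the order compatibility of $\varepsilon$. The paper simply argues directly (from $\beta(x-y)>\beta(y)$ it deduces $\alpha(x-y)>\alpha(y)$ via \eqref{ordinequality}, hence $\alpha(x)=\min(\alpha(x-y),\alpha(y))=\alpha(y)$), whereas you phrase the same argument as a contradiction; the two are contrapositive forms of one another.
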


\begin{proof}
We have $\beta(x-y) >\lambda = \beta(y)$.  Since
$\beta = \varepsilon \circ \alpha$,
\eqref{ordinequality} shows that $\alpha(x-y) > \alpha(y)$.
Hence, $\alpha(x) = \min\big(\alpha(x-y), \alpha(y)\big) = \alpha(y)$.
\end{proof}

In view of this lemma, we may define
\[
\alpha_{\lambda}\colon
V^{\beta}_\lambda\to\lambda\cup\{\infty\}\quad\text{by}
\quad
x+V^{\beta>\lambda}\mapsto
\begin{cases}
       \alpha(x)&\text{if $\beta(x)=\lambda$},\\
       \infty&\text{if $\beta(x)>\lambda$}.
\end{cases}
\]
Clearly, $\alpha_{\lambda}$ is a $u$-value function on $V^\beta_{\lambda}$.
For $\gamma\in\lambda$ we have
\[
(V^\beta_{\lambda})^{\alpha_\lambda}_{\gamma} \, = \, V^\alpha_{\gamma}.
\]
Therefore,
\[
\gr_{\alpha}(V)\ = \ \tbigoplus_{\lambda\in\Lambda}
\gr_{\alpha_{\lambda}}(V^\beta_{\lambda})
\quad\text{where}\quad
\gr_{\alpha_{\lambda}}(V^\beta_{\lambda})\ = \  \tbigoplus_{\gamma \in \lambda}
V_\gamma^\alpha
\qquad\text{while}\qquad
\gr_{\beta}(V) \ = \ \tbigoplus_{\lambda\in\Lambda}V^\beta_{\lambda}.
\]
Now, let
\[
\Gamma_{F} \, = \, v(F^\times) \, \subseteq \, \Gamma,\qquad
\Delta_{F} \, = \, \Delta\cap\Gamma_{F} \, \subseteq \, \Delta,\qquad
\Lambda_{F} \, = \, w(F^\times) \, = \, \Gamma_{F}/\Delta_{F} \,
\subseteq \, \Lambda.
\]
These groups are the value groups of, respectively, $v$, $u$, and $w$.
Similarly, let
\[
\Gamma_{V} \, = \, \alpha(V\setminus\{0\}) \, \subseteq \, \Gamma
\quad\text{and}\quad
\Lambda_{V} \, = \, \beta(V\setminus\{0\}) \, \subseteq \, \Lambda.
\]
For each $\lambda\in\Lambda_V$, let also
\[
\lambda_V \, = \, \alpha_\lambda(V_\lambda^\beta\setminus\{0\})
    \, \subseteq \, \lambda.
\]
Clearly, $\lambda_V=\lambda\cap\Gamma_V$.
Note $\Gamma_{F}$ (resp.\ $\Delta_{F}$, resp.\ $\Lambda_{F}$) is a
subgroup of $\Gamma$ (resp.\ $\Delta$, resp.\ $\Lambda$), while
$\Gamma_{V}$ (resp.\ $\Lambda_{V}$, resp.\ $\lambda_V$ for
$\lambda\in\Lambda_V$) is a union of cosets of
$\Gamma_{F}$ (resp.\ $\Lambda_{F}$, resp.\ $\Delta_F$). We denote by
$\IND{\Gamma_V}{\Gamma_F}$ the cardinality of the set of cosets of
$\Gamma_F$ in $\Gamma_V$, and define likewise
$\IND{\Lambda_V}{\Lambda_F}$ and $\IND{\lambda_V}{\Delta_F}$ for
$\lambda\in\Lambda_V$.

\begin{lemma}
       If $\dim_{F}V$ is finite, then $\IND{\Gamma_{V}}{\Gamma_{F}}$,
       $\IND{\Lambda_{V}}{\Lambda_{F}}$, and $\IND{\lambda_V}{\Delta_F}$
       for $\lambda\in\Lambda_V$ are finite. If $\lambda_{1}$,
       \ldots, $\lambda_{r}\in\Lambda_{V}$ are representatives of the
       various cosets of $\Lambda_{V}$ modulo $\Lambda_{F}$, then
       \[
       \IND{\Gamma_{V}}{\Gamma_{F}} \ = \
       \tsum_{i=1}^r\IND{(\lambda_{i})_V}{\Delta_{F}}.
       \]
\end{lemma}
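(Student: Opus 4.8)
The plan is to derive every finiteness assertion from the standard principle that, in a valued vector space, vectors whose values lie in distinct cosets of the value group of the base field are linearly independent, and then to prove the displayed identity by a direct count of cosets organised along the coarsening map $\varepsilon$. The principle itself is immediate: if $(E,u)$ is a valued field, $W$ an $E$-vector space with $u$-value function $\gamma$, and $w_1,\dots,w_k\in W\smallsetminus\{0\}$ have $\gamma(w_1),\dots,\gamma(w_k)$ in pairwise distinct cosets of $\Gamma_E$, then a nontrivial relation $\sum_jc_jw_j=0$ would have terms of pairwise distinct values $\gamma(w_j)+u(c_j)$, whence $\gamma\bigl(\sum_jc_jw_j\bigr)=\min_j\bigl(\gamma(w_j)+u(c_j)\bigr)<\infty$, a contradiction. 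Applied to $(F,v)$, $V$, $\alpha$ this gives $\IND{\Gamma_V}{\Gamma_F}\le\dim_FV<\infty$. Since $\varepsilon(\Gamma_F)=\Lambda_F$ we have $\varepsilon(\gamma+\Gamma_F)=\varepsilon(\gamma)+\Lambda_F$ for $\gamma\in\Gamma_V$, so $\varepsilon$ maps the set of $\Gamma_F$-cosets in $\Gamma_V$ onto the set of $\Lambda_F$-cosets in $\Lambda_V$; hence $r:=\IND{\Lambda_V}{\Lambda_F}\le\IND{\Gamma_V}{\Gamma_F}<\infty$, which justifies the choice of $\lambda_1,\dots,\lambda_r$. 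Finally, for each $\lambda\in\Lambda_V$ one applies the principle to the $u$-value function $\alpha_\lambda$ on $V^\beta_\lambda$ (the value group of $u$ being $\Delta_F$ and the value set of $\alpha_\lambda$ being $\lambda_V$, a union of $\Delta_F$-cosets): picking one vector per $\Delta_F$-coset of $\lambda_V$ yields an $\overline{F}^w$-independent family in $V^\beta_\lambda$, which lifts to an $F$-independent family in $V$ (reduce a would-be $F$-relation modulo $V^{\beta>\lambda}$: after dividing by a coefficient of least $w$-value, the surviving terms produce a nontrivial $\overline{F}^w$-relation in $V^\beta_\lambda$), so $\IND{\lambda_V}{\Delta_F}\le\dim_FV<\infty$.

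For the identity, let $\mathcal C$ be the set of $\Gamma_F$-cosets contained in $\Gamma_V$. For $C\in\mathcal C$ and any $\gamma\in C$, the set $\varepsilon(C)=\varepsilon(\gamma)+\Lambda_F$ is a single $\Lambda_F$-coset of $\Lambda_V$, hence equals $\lambda_i+\Lambda_F$ for exactly one $i\in\{1,\dots,r\}$; writing $\mathcal C_i=\{C\in\mathcal C\mid\varepsilon(C)=\lambda_i+\Lambda_F\}$ gives a partition $\mathcal C=\bigsqcup_{i=1}^r\mathcal C_i$, so $\IND{\Gamma_V}{\Gamma_F}=\sum_{i=1}^r\ORD{\mathcal C_i}$. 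The core step is to construct, for each $i$, a bijection $\Phi_i\colon\mathcal C_i\to\{\,\Delta_F\text{-cosets contained in }(\lambda_i)_V\,\}$ by $\Phi_i(C)=C\cap\varepsilon^{-1}(\lambda_i)$. This is well defined: $C\cap\varepsilon^{-1}(\lambda_i)\neq\varnothing$ because $\lambda_i\in\varepsilon(C)$; any two of its elements lie in the single $\Gamma_F$-coset $C$ and the single $\Delta$-coset $\varepsilon^{-1}(\lambda_i)$, hence differ by an element of $\Gamma_F\cap\Delta=\Delta_F$, and conversely the set is stable under translation by $\Delta_F$, so it is exactly one $\Delta_F$-coset; and it is contained in $\Gamma_V\cap\varepsilon^{-1}(\lambda_i)=(\lambda_i)_V$. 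Injectivity is clear since $\Phi_i(C)\subseteq C$, and for surjectivity, given a $\Delta_F$-coset $D\subseteq(\lambda_i)_V$, pick $\gamma\in D$ and set $C=\gamma+\Gamma_F$: then $C\subseteq\Gamma_V$ because $\Gamma_V$ is a union of $\Gamma_F$-cosets and $\gamma\in\Gamma_V$, while $\varepsilon(C)=\lambda_i+\Lambda_F$, so $C\in\mathcal C_i$ and $\Phi_i(C)=\gamma+\Delta_F=D$. Hence $\ORD{\mathcal C_i}=\IND{(\lambda_i)_V}{\Delta_F}$, and summing over $i$ gives $\IND{\Gamma_V}{\Gamma_F}=\sum_{i=1}^r\IND{(\lambda_i)_V}{\Delta_F}$.

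I do not expect a genuine obstacle; the only mildly delicate point is the verification that $C\cap\varepsilon^{-1}(\lambda_i)$ is a single $\Delta_F$-coset, which rests on using simultaneously that $C$ is one $\Gamma_F$-coset, that $\varepsilon^{-1}(\lambda_i)$ is one $\Delta$-coset, and that $\Gamma_F\cap\Delta=\Delta_F$, together with keeping careful track of which of $\Gamma_V$, $\Lambda_V$, $\lambda_V$ are unions of cosets of which groups — all recorded in the remarks preceding the lemma.
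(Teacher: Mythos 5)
Your proof is correct and takes essentially the same route as the paper's: both count the $\Gamma_F$-cosets of $\Gamma_V$ fibre-by-fibre over $\varepsilon$ and match them, using $\Gamma_F\cap\Delta=\Delta_F$ and $\lambda_V=\lambda\cap\Gamma_V$, with the $\Delta_F$-cosets of $(\lambda_i)_V$ (the paper via chosen representatives $\gamma_{ij}$, you via the explicit bijection $C\mapsto C\cap\varepsilon^{-1}(\lambda_i)$). The only other difference is that the paper cites \cite{RTW} (Prop.~2.2) for the finiteness bounds, which you reprove directly via the independence-of-distinct-cosets principle and a lifting argument.
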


\begin{proof}
       By \cite[Prop.~2.2]{RTW} we have
       \[
       \IND{\Gamma_{V}}{\Gamma_{F}} \, \leq \, \dim_{F}V,\qquad
       \IND{\Lambda_{V}}{\Lambda_{F}} \, \leq \, \dim_{F}V,
       \]
       and also
       \[
       \IND{\lambda_V}{\Delta_F} \, \leq \, \dim_{\overline{F}^w}V_{\lambda}
        \, \leq \, \dim_FV \qquad\text{for $\lambda\in\Lambda_V$}.
       \]
       For $i=1$, \ldots, $r$, let $\gamma_{i1}$, \ldots,
       $\gamma_{is_{i}}\in(\lambda_{i})_V\subseteq\Gamma$ be representatives
       of the various cosets of $(\lambda_{i})_V$ modulo~$\Delta_{F}$. Thus,
       \[
       (\lambda_{i})_V\, = \,
\textstyle\coprod\limits_{j=1}^{s_{i}}(\gamma_{ij}+\Delta_{F}),
       \]
       where $\coprod$ denotes the disjoint union. For
       $\gamma\in\Gamma_{V}$, we have
       $\varepsilon(\gamma)\in\Lambda_{V}$, hence
       \[
       \varepsilon(\gamma) \, = \, \lambda_{i}+w(a)
       \qquad\text{for some $i\in \{1$, \ldots, $r$\} and some $a\in F^\times$}.
       \]
       It follows that $\gamma-v(a)\in(\lambda_{i})_V$; hence,
       \[
       \gamma-v(a) \ = \ \gamma_{ij}+v(b)
       \qquad\text{for some $j\in \{1$, \ldots, $s_{i}\}$ and some $b\in
       F^\times$}.
       \]
       This shows that $\gamma\equiv\gamma_{ij}\bmod\Gamma_{F}$, hence
       \begin{equation}
	\label{Gamma.eq}
           \Gamma_{V}\ = \ \textstyle\bigcup\limits_{i=1}^r\,
\bigcup\limits_{j=1}^{s_{i}} \, (\gamma_{ij}+\Gamma_{F}).
       \end{equation}
       To complete the proof, it suffices to show the union is disjoint.
       If $\gamma_{ij}\equiv\gamma_{k\ell}\bmod\Gamma_{F}$ for some $i$,
       $j$, $k$, $\ell$, then
       $\varepsilon(\gamma_{ij})\equiv\varepsilon(\gamma_{k\ell})
       \bmod\Lambda_{F}$, hence $i=k$ since
       $\varepsilon(\gamma_{ij})=\lambda_{i}$ and
       $\varepsilon(\gamma_{k\ell})=\lambda_{k}$. Moreover, from
       $\varepsilon(\gamma_{ij})=\varepsilon(\gamma_{k\ell})$ it follows
       that $\gamma_{ij}-\gamma_{k\ell}\in\Delta$, hence
       $\gamma_{ij}\equiv\gamma_{k\ell}\bmod\Gamma_{F}$ implies
       $\gamma_{ij}\equiv\gamma_{k\ell}\bmod\Delta_{F}$, hence also
       $j=\ell$.
\end{proof}

\begin{proposition}
       \label{comp.prop}
       Suppose $\dim_{F}V$ is finite, and let $\lambda_{1}$, \ldots,
       $\lambda_{r}\in\Lambda_{V}$ be representatives of the various
       cosets of $\Lambda_{V}$ modulo $\Lambda_{F}$. The following
       conditions are equivalent:
       \begin{enumerate}
	\item[(i)]
	$\alpha$ is a norm;
	\item[(ii)]
	$\beta$ is a norm and $\alpha_{\lambda}$ is a norm for all
	$\lambda\in\Lambda_V$;
	\item[(iii)]
	$\beta$ is a norm and $\alpha_{\lambda_{i}}$ is a norm for
	$i=1$, \ldots, $r$.
       \end{enumerate}
\end{proposition}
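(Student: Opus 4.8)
The plan is to prove all three conditions equivalent by comparing dimensions, using the criterion \cite[Cor.~2.3]{RTW} that a value function on a finite‑dimensional space is a norm precisely when the dimension of its associated graded space over the graded base field equals the ungraded dimension, together with the general inequality $\le$ between these two dimensions (also \cite[Prop.~2.2]{RTW}; this is what shows each $V^\beta_\lambda$ is finite‑dimensional over $\ov F^w$). Here the relevant graded base fields are $\gr^v(F)=\gr(F)$ for $\alpha$, the graded ring $\gr^w(F)$ of $F$ for the coarser valuation $w$ for $\beta$, and the graded ring $\gr^u(\ov F^w)$ of $\ov F^w$ for $u$ for each $\alpha_\lambda$. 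The implication (ii)$\Rightarrow$(iii) is trivial; the core is an additivity formula for graded dimensions across the coarsening, extracted from the coset decomposition of $\Gamma_V$ proved in \eqref{Gamma.eq}.

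First I would record an elementary fact about a graded vector space $M$ over a graded field $\grK$ with value group $\Gamma_\grK$ and degree‑$0$ subfield $\grK_0$: since every nonzero homogeneous element of $\grK$ is a unit, $M$ is free, so $\supp(M)$ is a union of cosets of $\Gamma_\grK$, $\dim_{\grK_0}(M_\gamma)$ is constant along each such coset, and $\DIM{M}{\grK}=\tsum_{[\gamma]}\dim_{\grK_0}(M_\gamma)$, the sum over representatives of the cosets of $\Gamma_\grK$ meeting $\supp(M)$ (decompose $M$ along these cosets; each summand is free). I would also identify $\gr^u(\ov F^w)$ with the graded subfield $\bigoplus_{\delta\in\Delta}F_\delta$ of $\gr^v(F)$, by a direct filtration computation using convexity of $\Delta$ and the inclusion $F^{w>0}\subseteq F^{v>\delta}$ for $\delta\in\Delta$; its value group is $\Delta_F$ and its degree‑$0$ subfield is $\ov{\ov F^w}^u=\ov F^v$. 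Under this identification, for $\lambda\in\Lambda_V$ the equality $\gr_{\alpha_\lambda}(V^\beta_\lambda)=\bigoplus_{\gamma\in\lambda_V}V^\alpha_\gamma$ exhibits $\gr_{\alpha_\lambda}(V^\beta_\lambda)$ as a graded $\gr^u(\ov F^w)$‑vector space with support $\lambda_V$, so the fact above gives $\DIM{\gr_{\alpha_\lambda}(V^\beta_\lambda)}{\gr^u(\ov F^w)}=\tsum_{[\gamma]}\dim_{\ov F^v}(V^\alpha_\gamma)$, summed over representatives of the cosets of $\lambda_V$ modulo $\Delta_F$.

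Then I would assemble the key identity. Applying the fact above to $\gr_\beta(V)=\bigoplus_\lambda V^\beta_\lambda$ over $\gr^w(F)$ (value group $\Lambda_F$, degree‑$0$ part $\ov F^w$) gives $\DIM{\gr_\beta(V)}{\gr^w(F)}=\tsum_{i=1}^r\dim_{\ov F^w}(V^\beta_{\lambda_i})$. Applying it to $\gr_\alpha(V)=\bigoplus_{\gamma\in\Gamma_V}V^\alpha_\gamma$ over $\gr^v(F)$, and choosing for each $i$ representatives of the cosets of $(\lambda_i)_V$ modulo $\Delta_F$ — which by \eqref{Gamma.eq} and the disjointness established right after it collectively represent the cosets of $\Gamma_V$ modulo $\Gamma_F$ — gives
\[
\DIM{\gr_\alpha(V)}{\gr^v(F)} \ = \ \tsum_{i=1}^r\DIM{\gr_{\alpha_{\lambda_i}}(V^\beta_{\lambda_i})}{\gr^u(\ov F^w)}.
\]
Feeding these into the three instances of the norm criterion produces the chain
\[
\DIM{\gr_\alpha(V)}{\gr^v(F)} \ = \ \tsum_{i=1}^r\DIM{\gr_{\alpha_{\lambda_i}}(V^\beta_{\lambda_i})}{\gr^u(\ov F^w)} \ \le \ \tsum_{i=1}^r\dim_{\ov F^w}(V^\beta_{\lambda_i}) \ = \ \DIM{\gr_\beta(V)}{\gr^w(F)} \ \le \ \dim_FV ,
\]
where the first $\le$ is an equality iff every $\alpha_{\lambda_i}$ is a $u$‑norm and the second iff $\beta$ is a $w$‑norm. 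Thus (iii) makes both $\le$ equalities, forcing $\DIM{\gr_\alpha(V)}{\gr^v(F)}=\dim_FV$, i.e.\ (i); and (i) forces equality throughout, hence (iii). Finally (ii)$\Rightarrow$(iii) is immediate, and for (iii)$\Rightarrow$(ii) I would note that any $\lambda\in\Lambda_V$ equals $\lambda_i+w(a)$ for some $i$ and $a\in F^\times$, and right multiplication by $a$ induces an $\ov F^w$‑linear isomorphism $V^\beta_{\lambda_i}\iso V^\beta_\lambda$ carrying $\alpha_\lambda$ to $\alpha_{\lambda_i}+v(a)$; since adding a constant does not affect the splitting‑base property, $\alpha_{\lambda_i}$ being a norm forces $\alpha_\lambda$ to be one.

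The step I expect to be the main obstacle is the bookkeeping with the three value groups $\Delta_F\subseteq\Gamma_F$ and the quotient $\Lambda_F$: in particular pinning down $\gr^u(\ov F^w)$ as a graded subfield of $\gr^v(F)$ with the correct degree‑$0$ part, and checking that the coset count \eqref{Gamma.eq} of the preceding lemma really does translate into the displayed additive identity for graded dimensions. The finiteness of all the relevant indices, which makes the sums over $i$ and over coset representatives legitimate, is precisely what that lemma supplies.
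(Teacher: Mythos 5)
Your proposal is correct and follows essentially the same route as the paper: the same dimension bookkeeping over $\gr_v(F)$, $\gr_w(F)$ and $\gr_u(\overline{F}^w)$ based on the disjoint coset decomposition \eqref{Gamma.eq} of the preceding lemma, combined with the norm criterion of \cite[Cor.~2.3]{RTW}, merely packaged as one chain of inequalities instead of the paper's two cases. Your translation-by-$a$ argument for (iii)$\Rightarrow$(ii) is a harmless variant of the paper's remark that any $\lambda\in\Lambda_V$ may itself be chosen as a coset representative.
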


\begin{proof}
       Use the same notation as in the lemma. For simplicity, denote
       $\alpha_{i}=\alpha_{\lambda_{i}}$ and $V_{i}=V^\beta_{\lambda_{i}}$ for
       $i=1$, \ldots, $r$, and
       $V_{ij}=V^\alpha_{\gamma_{ij}}$ for $i=1$, \ldots, $r$ and $j=1$,
       \ldots, $s_{i}$, and use the notation $\DIM{V}{F}$ for
       $\dim_{F}V$. \relax From~\eqref{Gamma.eq} it follows that
       \begin{equation}
	\label{gr0.eq}
	\DIM{\gr_{\alpha}(V)}{\gr_{v}(F)} \ = \ \tsum_{i=1}^r \,
           \tsum_{j=1}^{s_{i}} \,
	\DIM{V_{ij}}{\overline{F}^v}.
       \end{equation}
       Likewise,
       \begin{equation}
	\label{gr1.eq}
	\DIM{\gr_{\beta}(V)}{\gr_{w}(F)} \ = \ \tsum_{i=1}^r \,
           \DIM{V_{i}}{\overline{F}^w}
       \end{equation}
       and
       \begin{equation}
	\label{gr2.eq}
	\DIM{\gr_{\alpha_{i}}(V_{i})}{\gr_{u}(\overline{F}^w) } \ = \
	\tsum_{j=1}^{s_{i}} \, \DIM{V_{ij}}{\overline{F}^v}
	\qquad\text{for $i=1$, \ldots, $r$}.
       \end{equation}
       If $\alpha_{k}$ is not a norm for some $k\in\{1, \ldots,
       r\}$, then
       \[
       \DIM{V_{k}}{\overline{F}^w} \ > \
       \DIM{\gr_{\alpha_{k}}(V_{k})}{\gr_{u}(\overline{F}^w)}.
       \]
       On the other hand, we have
       \[
       \DIM{V_{i}}{\overline{F}^w} \ \geq \
       \DIM{\gr_{\alpha_{i}}(V_{i})}{\gr_{u}(\overline{F}^w)}
       \quad\text{for all $i$};
       \]
       hence, by \eqref{gr1.eq} and \eqref{gr2.eq},
       \[
       \DIM{\gr_{\beta}(V)}{\gr_{w}(F)} \ > \
       \tsum_{i=1}^r \, \DIM{\gr_{\alpha_{i}}(V_{i})}{\gr_{u}(\overline{F}^w)}
        \ = \ \tsum_{i=1}^r \, \tsum_{j=1}^{s_{i}} \,
\DIM{V_{ij}}{\overline{F}^v}.
       \]
       In view of \eqref{gr0.eq}, it follows that
       $\DIM{\gr_{\beta}(V)}{\gr_{w}(F)}>\DIM{\gr_{\alpha}(V)}{\gr_{v}(F)}$.
       Since $\DIM{V}{F}\geq\DIM{\gr_{\beta}(V)}{\gr_{w}(F)}$, we have
       $\DIM{V}{F}>\DIM{\gr_{\alpha}(V)}{\gr_{v}(F)}$, hence $\alpha$ is
       not a norm.

       If each $\alpha_{i}$ is a norm, then
       $\DIM{V_{i}}{\overline{F}^w}=
       \DIM{\gr_{\alpha_{i}}(V_{i})}{\gr_{u}(F)}$ for $i=1$,
       \ldots, $r$, hence \eqref{gr1.eq}, \eqref{gr2.eq}, and
\eqref{gr0.eq} yield
       \[
       \DIM{\gr_{\beta}(V)}{\gr_{w}(F)} \ = \
       \tsum_{i=1}^r \, \tsum_{j=1}^{s_{i}} \, 
\DIM{V_{ij}}{\overline{F}^v} \ = \
       \DIM{\gr_{\alpha}(V)}{\gr_{v}(F)}.
       \]
       It follows that $\alpha$ is a norm if and only if $\beta$ is a
       norm. We have thus proved (i)$\iff$(iii). Since any
       $\lambda\in\Lambda$ can be chosen as a representative of its
       coset, the arguments above also show (i)~$\Rightarrow$~(ii).
       Since (ii)~$\Rightarrow$~(iii) is clear, the proof is complete.
\end{proof}

To set Prop.~\ref{comp.prop} in perspective, we relate the graded
vector spaces $\gr_\alpha(V)$ and $\gr_\beta(V)$ by means of a
value-function-like map
\[
\alpha_*\colon\gr_\beta(V)\to\Gamma\cup\{\infty\}
\]
defined as follows: for $\xi\in\gr_\beta(V)$, $\xi\neq0$, let
$\ell(\xi)$ be the homogeneous component of $\xi$ of least degree, and
let $\lambda=\deg\bigl(\ell(\xi)\bigr)$, so $\ell(\xi)\in
V_\lambda^\beta$; then let
\[
\alpha_*(\xi) \, = \, \alpha_\lambda\bigl(\ell(\xi)\bigr)
\in\lambda \, \subseteq \, \Gamma.
\]
Let also $\alpha_*(0)=\infty$. For $x\in V$ we thus have
\begin{equation}\label{alphastar}
\alpha_*(\tilde x^\beta) \, = \, \alpha(x),
\end{equation}
where $\tilde x^\beta$ denotes the image of $x$ in $\gr_\beta(V)$.

A similar construction applies to the valuation $v$, and yields a map
\[
v_*\colon\gr_w(F) \, \to \, \Gamma\cup\{\infty\},
\]
which satisfies the same properties as a valuation, and such that the
image $v_*(\rho)$ of any nonzero $\rho\in\gr_w(F)$ depends only on its
homogeneous component of least degree. The map $\alpha_*$ deserves the
name of a \emph{graded $v_*$-value function} since it satisfies the
following properties:
\enlargethispage{\baselineskip}
\begin{enumerate}
\item[(i)]
$\alpha_*(\xi)=\infty$ if and only if $\xi=0$; if $\xi\neq0$, then
$\alpha_*(\xi)=\alpha_*\bigl(\ell(\xi)\bigr)$ and
$\varepsilon\circ\alpha_*(\xi)=\deg\ell(\xi)$;
\item[(ii)]
$\alpha_*(\xi+\eta)\geq\min\bigl(\alpha_*(\xi),\alpha_*(\eta)\bigr)$
for $\xi$, $\eta\in\gr_\beta(V)$;
\item[(iii)]
$\alpha_*(\xi\rho)=\alpha_*(\xi)+v_*(\rho)$ for $\xi\in\gr_\beta(V)$
and $\rho\in\gr_w(F)$.
\end{enumerate}
We may thus consider the associated graded structure
$\gr_{\alpha_*}\bigl(\gr_\beta(V)\bigr)$. If $x\in V$ satisfies
$\beta(x)=\lambda$ and $\alpha(x)=\gamma$, we may identify
\[
(x+V^{\beta>\lambda})+\gr_\beta(V)^{\alpha_*>\gamma} \ = \
x+V^{\alpha>\gamma};
\]
thus
\begin{equation}\label{gradeidentity}
\gr_{\alpha_*}\bigl(\gr_\beta(V)\bigr) \, = \ \gr_\alpha(V).
\end{equation}
We define $\alpha_*$ to be a {\it  graded $v_*$-norm} if
$ \ \DIM{\gr_{\alpha_*}\!\big(\grof {\beta}V\big)}
{\gr_{v_*}\!\big({\grof w F}\big)} =
\DIM {\grof {\beta}V}{\grof w F}$.  It is easy to check
that this holds if and only if each $\al_\lambda$ is a $u$-norm. By an
argument analogous to the one in \cite[Prop.~2.5]{RTW} for ungraded norms,
one can check that if $\al_*$ is a graded norm, then for any graded subspace
$\grW$ of $\grof \beta V$, $\alpha_*\rest \grW$ is a graded norm on
$\grW$.  Consequently, by dimension count, the functor
$\gr_{\alpha_*}(\underline{\ \ })$ preserves strict inclusions of
graded subspaces of $\grof \beta V$.
Prop.~\ref{comp.prop} may be rephrased as follows: $\alpha$ is a
$v$-norm if and only if $\beta$ is a $w$-norm and $\alpha_*$ is a
graded $v_*$-norm. Indeed, if $(e_i)_{i=1}^n$ is a splitting base of~
$V$ for $\alpha$, then it is also a splitting base for $\beta$, and
$(\tilde e_i^{\,\beta})_{i=1}^n$ is a splitting base of $\gr_\beta(V)$ for
$\alpha_*$.

We now apply this construction to a finite-dimensional $F$-algebra
$A$. If $\alpha\colon A\to\Gamma\cup\{\infty\}$ is a surmultiplicative
$v$-value function, then the coarser $w$-value function
$\beta=\varepsilon\circ\alpha$ is clearly surmultiplicative, and the
map $\alpha_*$ is also surmultiplicative, by an easy calculation using
\eqref{alphastar}. The notions of gauge and tame gauge for graded norms
are defined analogously to the ungraded cases.

\begin{proposition}
     \label{compgauge.prop}
     The map $\alpha$ is a $v$-gauge (resp.\ a tame $v$-gauge) if and
     only if $\beta$ is a $w$-gauge (resp.\ a tame $w$-gauge) and
     $\alpha_*$ is a graded $v_*$-gauge (resp.\ a tame graded $v_*$-gauge).
\end{proposition}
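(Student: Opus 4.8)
The plan is to verify the three defining properties of a (tame) gauge — being a norm, graded semisimplicity of the associated graded algebra, and for tameness the coincidence of its center with the graded center together with separability over $\gr(F)$ — one at a time, using throughout the identification $\gr_{\alpha_*}(\gr_\beta(A))=\gr_\alpha(A)$ of \eqref{gradeidentity} (applied to $A$, and later to $Z(A)$), the same identity for $V=F$ giving $\gr_{v_*}(\gr_w(F))=\gr_v(F)$, and the fact recorded after \eqref{gradeidentity} that once $\alpha_*$ is a graded $v_*$-norm the functor $\gr_{\alpha_*}(\underline{\ \ })$ sends strict inclusions of graded subspaces of $\gr_\beta(A)$ to strict inclusions. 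The ``norm'' condition requires nothing new: it is exactly the rephrasing of Prop.~\ref{comp.prop} stated above, that $\alpha$ is a $v$-norm iff $\beta$ is a $w$-norm and $\alpha_*$ is a graded $v_*$-norm.

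For graded semisimplicity, one implication is a tautology: $\gr_\alpha(A)=\gr_{\alpha_*}(\gr_\beta(A))$, so if $\alpha_*$ is a graded $v_*$-gauge then $\gr_\alpha(A)$ is graded semisimple. For the converse, assume $\alpha$ is a $v$-gauge and let $\mathsf N\subseteq\gr_\beta(A)$ be a homogeneous two-sided ideal with $\mathsf N^k=0$. Since $\alpha_*$ is a graded $v_*$-norm, $\gr_{\alpha_*}(\mathsf N)\neq0$ whenever $\mathsf N\neq0$; and from the description of multiplication in $\gr_{\alpha_*}(\gr_\beta(A))$ one checks routinely that $\gr_{\alpha_*}(\mathsf N)$ is a homogeneous two-sided ideal of $\gr_\alpha(A)$ with $\gr_{\alpha_*}(\mathsf N)^k\subseteq\gr_{\alpha_*}(\mathsf N^k)=0$. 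As $\gr_\alpha(A)$ has no nonzero nilpotent homogeneous two-sided ideal, $\mathsf N=0$, so $\gr_\beta(A)$ is graded semisimple. Combined with the norm equivalence, this settles the non-tame part of the proposition.

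For tameness, begin from the general inclusions $\gr_\beta(Z(A))\subseteq Z(\gr_\beta(A))$ and $\gr_{\alpha_*}(Z(\gr_\beta(A)))\subseteq Z(\gr_{\alpha_*}(\gr_\beta(A)))$, each holding because a central element of a filtered or graded algebra has central image in the associated graded. Applying $\gr_{\alpha_*}$ to the first inclusion and using \eqref{gradeidentity} for $Z(A)$ yields the chain
\[
\gr_\alpha(Z(A)) \ = \ \gr_{\alpha_*}\bigl(\gr_\beta(Z(A))\bigr) \ \subseteq \ \gr_{\alpha_*}\bigl(Z(\gr_\beta(A))\bigr) \ \subseteq \ Z\bigl(\gr_{\alpha_*}(\gr_\beta(A))\bigr) \ = \ Z(\gr_\alpha(A)).
\]
If $\alpha$ is tame the outer terms agree, hence all inclusions are equalities; since $\gr_{\alpha_*}$ preserves strict inclusions, $\gr_{\alpha_*}(\gr_\beta(Z(A)))=\gr_{\alpha_*}(Z(\gr_\beta(A)))$ forces $\gr_\beta(Z(A))=Z(\gr_\beta(A))$, which with the other equalities gives the ``center'' half of tameness for both $\beta$ and $\alpha_*$. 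Conversely, if $\beta$ and $\alpha_*$ are tame, the middle equalities of the chain (now available by hypothesis) give $\gr_\alpha(Z(A))=Z(\gr_\alpha(A))$. As for separability, $Z(\gr_\alpha(A))=Z(\gr_{\alpha_*}(\gr_\beta(A)))$ is separable over $\gr_v(F)=\gr_{v_*}(\gr_w(F))$ precisely when $\alpha_*$ satisfies its separability clause, so the only real issue is to relate this to separability of $Z(\gr_\beta(A))$ over $\gr_w(F)$.

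This last equivalence is the heart of the matter and where I expect the real work to be. I would first reduce $Z(\gr_\beta(A))$ to a finite product of graded fields — the homogeneous idempotents of a gauged graded semisimple algebra have degree $0$ and value $0$, so the decomposition is respected by $\alpha_*$ — and then invoke the explicit characterization of separable graded field extensions, namely residue separability together with prime-to-residue-characteristic index of value groups (cf.\ \cite[Def.~3.4, Th.~3.11]{HW1}). In a tower $\gr_v(F)=\gr_{v_*}(\gr_w(F))\subseteq\gr_w(F)\subseteq E$, with $E$ one of these graded fields, each of the two conditions for $E/\gr_w(F)$ should correspond, thanks to the norm property making the relevant dimensions and value-group indices multiplicative, to the corresponding conditions for the graded fields making up $\gr_{\alpha_*}(E)$ over $\gr_v(F)$. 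The divisibility of $\Gamma$ and the defectlessness built into ``norm'' are what keep these index computations under control.
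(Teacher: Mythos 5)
Your treatment of the norm condition (via Prop.~\ref{comp.prop}), of graded semisimplicity (pushing a nilpotent homogeneous ideal $\mathsf{N}\subseteq\gr_\beta(A)$ forward to $\gr_{\alpha_*}(\mathsf{N})\subseteq\gr_\alpha(A)$), and of the ``center'' half of tameness (the chain of inclusions ending in $Z(\gr_\alpha(A))$, split into two equalities using that $\gr_{\alpha_*}(\underline{\ \ })$ preserves strict inclusions) coincides with the paper's proof. The one place where the paper does work beyond these formal identifications is exactly the step you flag as ``the heart of the matter'': showing that tameness of $\alpha$ forces $\gr_\beta\bigl(Z(A)\bigr)$ to be separable over $\gr_w(F)$ (the converse direction is indeed immediate, since $\alpha$ and $\alpha_*$ have the same graded ring). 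There your proposal is only a plan, and as stated it has a genuine gap: the asserted term-by-term correspondence of the two conditions of \cite[Th.~3.11]{HW1} across the tower $\gr_v(F)\subseteq\gr_w(F)\subseteq E$ is not automatic. Separability of the residue extension $E_0/\overline{F}^w$ is an \emph{ungraded} field-theoretic condition at the intermediate level, and it is not a formal consequence of graded separability of the pieces of $\gr_{\alpha_*}(E)$ over $\gr_v(F)$ together with multiplicativity of dimensions and value-group indices (associated graded data never ``see'' inseparability unless one argues, e.g., via Frobenius and absence of nilpotents); moreover the two characterizations refer to different residue characteristics ($\charac(\overline{F}^w)$ versus $\charac(\overline{F}^v)$), so the conditions do not even match up one-for-one, and this needs to be addressed (it is harmless only because $\charac(\overline{F}^w)=0$ makes the target statement vacuous).

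The paper avoids all of this bookkeeping by arguing at the level of graded fields directly: if some component $\grC_j$ of $Z(\gr_\beta(A))=\gr_\beta(Z(A))$ is not separable over $\gr_w(F)$, then by \cite[Prop.~3.5, Prop.~3.7]{HW1} there is a graded field $\grT$ with $\gr_w(F)\subseteq\grT\subsetneqq\grC_j$ and $\grC_j$ purely inseparable over $\grT$; applying $\gr_{\alpha_*}$, which preserves strict inclusions (this is where the norm hypothesis enters, rather than through index computations), one gets that $\grof{\alpha_*}{\grC_j}$ is a proper, purely inseparable extension of $\grof{\alpha_*}{\grT}\supseteq\gr_v(F)$, hence not separable over $\gr_v(F)$, so $\gr_\alpha\bigl(Z(A)\bigr)$ is not separable over $\gr_v(F)$, contradicting tameness of $\alpha$. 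To complete your write-up you should either adopt this contrapositive argument or supply the missing descent step in your route (in particular, a Frobenius-plus-reducedness argument showing that inseparability of $E_0/\overline{F}^w$, or $p$-divisibility of the relevant index, is detected after applying $\gr_{\alpha_*}$); as it stands, the sentence ``each of the two conditions \dots should correspond'' is precisely the statement to be proved.
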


\begin{proof}
     Prop.~\ref{comp.prop} already shows that $\alpha$ is a $v$-norm if
     and only if $\beta$ is a $w$-norm and $\alpha_*$ is a graded
     $v_*$-norm. We noted above that $\alpha$ is surmultiplicative if and
     only if $\beta$ and $\alpha_*$ are surmultiplicative.

     Suppose $\alpha$ is a $v$-gauge. Since
     $\gr_{\alpha_*}\bigl(\gr_\beta(A)\bigr)=\gr_\alpha(A)$ and
     $\gr_\alpha(A)$ is semisimple, it follows that $\gr_\beta(A)$ is
     semisimple. For, if $\grI$
     is a nontrivial nilpotent homogeneous left  ideal of $\gr_\beta(A)$,
then $\gr_{\alpha_*}(\grI)$ is a nontrivial nilpotent homogeneous left
     ideal of $\gr_{\alpha_*}\bigl(\gr_\beta(A)\bigr)$.
    Thus, $\beta$ is
     a $w$-gauge. Also, $\gr_{\alpha_*}\bigl(\gr_\beta(A)\bigr)$ is
     semisimple by hypothesis, hence $\alpha_*$ is a graded
     $v_*$-gauge. Conversely, if $\beta$ is a $w$-gauge and $\alpha_*$ is
     a graded $v_*$-gauge, then $\alpha$ is a $v$-gauge since
     $\gr_\alpha(A)=\gr_{\alpha_*}\bigl(\gr_\beta(A)\bigr)$.

Assume now that $\al$ is a $v$-gauge. For the centers we have the obvious
inclusions
\begin{equation}\label{centers}
     \gr_\alpha\bigl(Z(A)\bigr) \ = \ \gr_{\alpha_*}\bigl(\gr_\beta(Z(A))\bigr)
      \ \subseteq \  \gr_{\alpha_*}\bigl(Z(\gr_\beta(A))\bigr)  \ \subseteq \
     Z\bigl(\gr_{\alpha_*}(\gr_\beta(A))\bigr)
      \ = \ Z\bigl(\gr_\alpha(A)\bigr).
\end{equation}
Thus, $Z\bigl(\gr_\alpha(A)\bigr) = \gr_\alpha\bigl(Z(A)\bigr)$ if and only if
we have equalities throughout \eqref{centers}; since
$\gr_{\alpha_*}(\underline{\ \ })$ preserves strict inclusions, this holds if
and only if
\[
\gr_\beta\bigl(Z(A)\bigr)=Z\bigl(\gr_\beta(A)\bigr)\quad\text{and}\quad
\gr_{\alpha_*}\bigl(Z(\gr_\beta(A))\bigr)=
Z\bigl(\gr_{\alpha_*}(\gr_\beta(A))\bigr).
\]
Assume we have these equalities.  Let $Z = Z(A)$, which is
a direct product of fields, as $A$ is semisimple.
The separability condition on the graded center required for tameness
holds for $\al$ if and only if it holds for $\al_*$, since they
have the same graded rings.  Suppose now that
$\grof\beta Z$ is not separable over $\grof w F$.  Because
$\grof \beta A$ is semisimple, its center $\grof \beta Z$ is a
direct product $\grC_1\times \ldots \times \grC_k$ of graded fields, and
some $\grC_j$ must not be separable over $\grof w F$. By
\cite[Prop.~3.7, Prop.~3.5]{HW1} there is a graded field $\grT$ with
$\grof w F \subseteq \grT \subsetneqq \grC_j$ and $\grC_j$
purely inseparable over $\grT$.  So,
\[
\grof v F  \, = \,
\gr_{\alpha_*}\!\big(\grof wV\big)   \subseteq \,  \grof{\alpha_*}\grT
    \, \subsetneqq  \, \grof{\alpha_*}{\grC_j},
\]
and $\grof{\alpha_*}{\grC_j}$
is purely inseparable over $\grof{\alpha_*}\grT$.
Now, $\grof \al Z = \gr_{\al_*}\!\big(\grof \beta Z\big) =
\prod_{i=1}^k\grof{\al_*}{\grC_i}$.  Since $\grof{\al_*}{\grC_j}$
is purely inseparable over $\grof{\al_*}\grT$, it cannot be separable over
$\grof v F$, so $\grof\al Z$ is not separable over $\grof v F$.  Thus,
$\grof \al Z$ is separable over $\grof v F$ if and only
$\gr_{\al_*}\!\big(\grof \beta Z\big)$  is separable over $\gr_{v_*}
\!\big(\grof w F)$ and $\grof \beta Z$ is separable over $\grof w F$.
    Therefore, $\alpha$ is a tame $v$-gauge
if and only if $\beta$ is a tame $w$-gauge and $\alpha_*$ is a tame
graded $v_*$-gauge.
\end{proof}

\section{Descent of norms}
\label{descent.sec}

Throughout this section, we fix the following notation: $V$ is a
finite-dimensional vector space over a field $F$, and
$v\colon F\to\Gamma\cup\{\infty\}$ is a valuation. Let $(F_h,v_h)$
be a Henselization of $(F,v)$. If $\alpha\colon
V\otimes_FF_h\to\Gamma\cup\{\infty\}$ is a $v_h$-norm, then clearly
$\alpha\rvert_V\colon V\to\Gamma\cup\{\infty\}$ is a $v$-value
function, but not necessarily a $v$-norm unless $\Gamma$ has rank one,
see Prop.~\ref{rk1.prop} and Ex.~\ref{notnorm.ex}. In this section, we
give an inductive criterion for $\alpha\rvert_V$ to be a
$v$-norm when $\Gamma$ is the divisible hull of $\Gamma_F$ and the
rank $\rk(\Gamma)$ is finite, see Prop.~\ref{normcrit.prop}.

We first discuss the descent problem in a general context: let
$(K,v_K)$ be an arbitrary valued field extension of $(F,v)$, and let
$\alpha\colon V\otimes_FK\to\Gamma\cup\{\infty\}$ be a $v_K$-norm. We
identify $V$ with its canonical image in $V\otimes_FK$. For any $x\in
V$ and $c\in K$ we have
\[
\al(x\otimes c) = \al((x\otimes 1) \cdot c) = \al \rest V(x) + v_K(c).
\]
Therefore, for any $\gamma \in\Gamma_{V,\al\rest V}$ and $\delta
\in \Gamma_K$ the  usual $F$-bilinear map $V\times K \to V\otimes _F K$ sends
$V^{\ge \gamma} \times K^{\ge \delta}$ into $(V\otimes_F K)^{\ge \gamma
+ \delta}$.  Likewise, $V^{> \gamma} \times K^{\ge \delta}$ and
$V^{\ge \gamma} \times K^{> \delta}$ map into
$(V\otimes_F K)^{> \gamma+ \delta}$.
Consequently, there is a well-defined induced map
$V_\gamma \times K_\delta  \to (V\otimes_F K)_{\gamma+\delta}$ given by
$(\tilde x, \tilde c) \mapsto \tilde{x\otimes c}$.   The direct sum of these
maps over all such $\gamma, \delta$ yields a map
${\grof {\al\rest V} V\times\grof {v_K} K \to \grof\al{V\otimes_F K}}$,
which is clearly $\grof v F$-bilinear, hence there is a canonical map
\[
\chi\colon\gr_{\alpha\rvert_V}(V)\otimes_{\gr_v(F)} \gr_{v_K}(K) \to
\gr_\alpha(V\otimes_FK)
\]
which maps $\tilde x\otimes\tilde c$ to $\tilde{x\otimes c}$ for $x\in
V$ and $c\in K$.

On the other hand, recall from Sec.~\ref{sec:compa} (see
\eqref{eq:griso}) that if $\alpha\rvert_V$ is a $v$-norm on $V$, then
there is a canonical isomorphism of $\gr_{v_K}(K)$-vector spaces
\[
\rho\colon \gr_{\alpha\rvert_V\otimes v_K}(V\otimes_FK) \iso
\gr_{\alpha\rvert_V}(V)\otimes_{\gr_v(F)}\gr_{v_K}(K)
\]
which maps $\tilde{x\otimes c}$ to $\widetilde{x}\otimes\widetilde{c}$
for $x\in V$ and $c\in K$.

\begin{lemma}\label{reconstruct}
The following conditions are equivalent:
\begin{enumerate}
\item[(a)]
$\al \rest V$ is a $v$-norm on $V$ and $\al = \al \rest V \otimes v_K$.
    \item[(b)]
$V$ contains a $K$-splitting base of the norm $\al$  on $V\otimes_F K$.
    \item[(c)]  $\al \rest V$ is a $v$-norm and the canonical map
$\chi$ is injective.
\end{enumerate}
When these conditions hold, the map $\chi$ is a graded isomorphism,
which is the inverse of $\rho$, and
$\Gamma_{V\otimes_F K, \al} = \Gamma_{V, \al\rest V} + \Gamma_{K,v_K}$.
\end{lemma}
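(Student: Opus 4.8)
The plan is to establish the cycle (a)~$\Rightarrow$~(b)~$\Rightarrow$~(a) and then (a)~$\Rightarrow$~(c)~$\Rightarrow$~(a), so that the three conditions become equivalent, and afterward to extract the trailing assertions (that $\chi$ is a graded isomorphism inverse to $\rho$, and the value-set equality) from the explicit description of $\alpha$ obtained in case~(a). The standing tool is a splitting base $(e_i)_{i=1}^n$ of $V$ for $\alpha\rest V$, available as soon as $\alpha\rest V$ is known to be a $v$-norm, together with the explicit formula for the tensor norm $\alpha\rest V\otimes v_K$ on such a base recalled before \eqref{eq:griso}.

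The three ``easy'' implications would run as follows. For (a)~$\Rightarrow$~(b): the tensor-norm formula on $(e_i)$ shows at once that the image of $(e_i)_{i=1}^n$ in $V\otimes_F K$ is a $K$-splitting base of $\alpha$, and it lies in $V$. For (b)~$\Rightarrow$~(a): a $K$-splitting base $(e_i)_{i=1}^n$ of $\alpha$ contained in $V$ is $K$-independent in $V\otimes_F K$, hence $F$-independent in $V$, hence an $F$-base of $V$ by cardinality; restricting the splitting identity to coefficients in $F$ makes $(e_i)$ a splitting base of $V$ for $\alpha\rest V$, and the same identity with coefficients in $K$ then reads $\alpha=\alpha\rest V\otimes v_K$. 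For (a)~$\Rightarrow$~(c): under~(a) the codomain of $\chi$ is $\gr_{\alpha\rest V\otimes v_K}(V\otimes_F K)$, which is the domain of $\rho$, and evaluating on a homogeneous generator gives $\rho\circ\chi(\tilde x\otimes\tilde c)=\rho(\tilde{x\otimes c})=\tilde x\otimes\tilde c$; hence $\rho\circ\chi=\id$, so $\chi$ is injective, and since $\rho$ is bijective in fact $\chi=\rho^{-1}$ --- which already yields the trailing claims.

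The hard part will be (c)~$\Rightarrow$~(a). Since $\alpha\rest V$ is a $v$-norm, $\alpha\rest V\otimes v_K$ is defined, and writing $z=\tsum_i(e_i\otimes1)y_i$ the value-function axioms yield the easy inequality $\alpha(z)\ge\min_i(\alpha\rest V(e_i)+v_K(y_i))=(\alpha\rest V\otimes v_K)(z)$. I would assume this were strict for some $z\neq0$, put $\mu=(\alpha\rest V\otimes v_K)(z)$ and $S=\{i:\alpha\rest V(e_i)+v_K(y_i)=\mu\}$ (nonempty, with $y_i\neq0$ on $S$), and form $\zeta=\tsum_{i\in S}\tilde e_i\otimes\tilde y_i$. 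On one hand, $\zeta$ is a nonzero homogeneous element of degree~$\mu$ of $\gr_{\alpha\rest V}(V)\otimes_{\gr_v(F)}\gr_{v_K}(K)$, being the nontrivial combination $\tsum_{i\in S}(\tilde e_i\otimes\tilde1)\tilde y_i$ of free basis vectors. On the other hand, $\chi(\zeta)=\tsum_{i\in S}\tilde{e_i\otimes y_i}$ is the image of $z$ in the degree-$\mu$ component $(V\otimes_F K)^{\ge\mu}/(V\otimes_F K)^{>\mu}$, because the discarded terms $\tsum_{i\notin S}e_i\otimes y_i$ have $\alpha$-value $>\mu$; and that image is $0$ since $\alpha(z)>\mu$. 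Thus $\chi(\zeta)=0$ with $\zeta\neq0$, contradicting injectivity of $\chi$, so $\alpha=\alpha\rest V\otimes v_K$. The place needing the most care is precisely this identification of $\chi(\zeta)$ with the top-degree part of $\tilde z$: one must check that $\zeta$ is nonzero and that the lower-order terms genuinely fall into $(V\otimes_F K)^{>\mu}$ --- both routine consequences of $(e_i)$ being a splitting base, but easy to mishandle. Finally, with (a) in hand, $\Gamma_{V\otimes_F K,\alpha}\supseteq\Gamma_{V,\alpha\rest V}+\Gamma_{K,v_K}$ follows from $(\alpha\rest V\otimes v_K)(x\otimes c)=\alpha\rest V(x)+v_K(c)$ by \eqref{alphatensorbeta} (taking nonzero $x$ and $c$), and the reverse inclusion follows since the splitting-base formula realizes every value of $\alpha$ as some $\alpha\rest V(e_i)+v_K(y_i)$.
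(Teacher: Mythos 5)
Your proposal is correct, and the easy implications (b)~$\Rightarrow$~(a) and (a)~$\Rightarrow$~(c) (with $\rho\circ\chi=\id$ giving $\chi=\rho^{-1}$ and hence the trailing claims) match the paper's proof; where you differ is in closing the cycle. The paper proves (c)~$\Rightarrow$~(b): starting from a splitting base $(e_i)$ of $\alpha\rvert_V$, injectivity of $\chi$ makes the elements $(e_i\otimes 1)^\sim$ linearly independent over $\gr(K)$, and then a dimension count, using that \emph{both} $\alpha$ and $\alpha\rvert_V$ are norms together with \cite[Cor.~2.3(ii)]{RTW}, shows they form a homogeneous base, so $(e_i\otimes1)_{i=1}^n$ is a $K$-splitting base inside $V$. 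You instead prove (c)~$\Rightarrow$~(a) directly: assuming $\alpha(z)>(\alpha\rvert_V\otimes v_K)(z)$ for some $z$, you form the leading form $\zeta=\sum_{i\in S}\tilde e_i\otimes\tilde y_i$, which is nonzero because the $\tilde e_i\otimes\tilde 1$ are a free $\gr(K)$-basis, while $\chi(\zeta)$ is the degree-$\mu$ image of $z$ and hence $0$, contradicting injectivity; your checks that $\zeta\neq0$ and that the terms with $i\notin S$ lie in $(V\otimes_FK)^{>\mu}$ (via $\alpha(e_i\otimes y_i)=\alpha\rvert_V(e_i)+v_K(y_i)$) are the right ones and are sound. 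The paper's route lands on (b) with minimal computation but leans on the norm hypothesis for $\alpha$ and on the cited dimension criterion; your leading-term argument is self-contained, does not actually use that $\alpha$ is a norm in this step (only that $\alpha\rvert_V$ is), and yields the equality $\alpha=\alpha\rvert_V\otimes v_K$ immediately, at the cost of the more delicate bookkeeping you flagged. Both are valid proofs of the lemma.
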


\begin{proof}
(b) $\Rightarrow$  (a) If $\mathcal B = (e_i)_{i=1}^n \subseteq V$
is a splitting base for $\al$ on $V\otimes _F K$, then $\mathcal B$ is clearly
also a splitting base for $\al \rest V$ on $V$.  So, $\al\rest V$ is
a $v$-norm.
Furthermore, by the definition
of $\al\rest V \otimes v_K$, we have for any $k_1, \ldots, k_n \in K$,
\[
\begin{aligned}
(\al\rest V \otimes v_K)\big(\tsum_{i=1}^n e_i \otimes k_i\big) \ &= \
\min\limits_{1\le i\le n}\big(\al\rest V(e_i) + v_K(k_i) \big) \ = \
\min\limits_{1\le i\le n}\big( \al(e_i\otimes 1) + v_K(k_i)\big) \\  &= \
\al \big(\tsum_{i=1}^n (e_i\otimes 1)\cdot k_i\big)  \ = \
\al(\tsum_{i=1}^n e_i\otimes k_i\big),
\end{aligned}
\]
showing that $\al\rest V\otimes v_K = \al$.

(a)  $\Rightarrow$ (c) \
When (a) holds, $\al \rest V$ is a norm, and $\chi$
is clearly the inverse of $\rho$, so $\chi$ is injective.

(c)  $\Rightarrow$ (b) \   Suppose (c) holds.  Let $(e_i)_{i=1}^n$ be an
$F$-splitting   base for $\al \rest V$ on $V$.  Then, by
\cite[Cor.~2.3(ii)]{RTW}
$\tilde{e_1}, \ldots, \tilde {e_n}$ are $\grof v F$-linearly independent
in $\grof {\al\rest V} V$.   Hence, $\tilde {e_1} \otimes \tilde 1,
\ldots, \tilde {e_n} \otimes \tilde 1$ are $\grof {v_K} K$-linearly independent
in $\grof {\al \rest V}V \otimes
_{\grof v F} \grof {v_K} K$.  By the injectivity of $\chi$ the
$\chi(\tilde{e_i}\otimes \tilde 1) = \tilde{e_i \otimes 1}$ are
$\grof {v_K}K$-linearly independent in $\grof \al {V\otimes_F  K}$. But, since
$\al $ and $\al\rest V$ are norms,
\[
\DIM{\grof \al{V\otimes_F K}}{\grof {v_K}K}\,=\,
\DIM{V\otimes _FK}{K}\,=\,\DIM{V}{F}\,=\,n.
\]
So,  $\big(\tilde{e_i \otimes 1}\big)_{i = 1}^n$ is a homogeneous
$\grof {v_K} K$-vector space base of $\grof \al
{V\otimes_F K}$, hence $(e_i\otimes 1)_{i = 1}^n$ is a $K$-splitting base for
$\al$ on $V \otimes_F K$ by \cite[Cor.~2.3(ii)]{RTW}.

When the conditions (a) -- (c) hold, we have
\[
\Gamma_{V\otimes_F K, \al} \ = \  
\Gamma_{V\otimes_F K, \al \rest V \otimes v_K} 
 \ = \  \Gamma_{V, \al\rest V}
+ \Gamma_{K,v_K}
\]
and the map $\chi$ is the inverse of $\rho$, so $\chi$ is an isomorphism.
\end{proof}

Note that under the hypotheses of Lemma~\ref{reconstruct} if
$\al\rest V$ is a norm then $\al \ge \al\rest V \otimes v_K$.  For, if
$(e_i)_{i=1}^n$ is a $v$-splitting base for $\al\rest V$ on $V$, then for
any $k_1$, \ldots, $k_n\in K$,
\[
\al \big(\tsum _{i=1}^n e_i \otimes k_i \big) \ \ge \
\min\limits_{1\le i\le n}\big(\al(e_i \otimes k_i)\big)  \ = \
\min\limits_{1\le i\le n}\big(\al(e_i) +v_K(k_i)\big) \ = \
(\al\rest V \otimes v_K) \big(\tsum _{i=1}^n e_i \otimes k_i \big) .
\]
We next show that the inequality $\al \ge \al\rest V \otimes v_K$ is
actually an equality when $K$ is immediate over $F$, but not in general.

\begin{corollary}
     \label{immediate.lem}
     Let $(K,v_K)$ be an immediate valued field extension of $(F,v)$ and let
     ${\alpha\colon V\otimes_FK\to\Gamma\cup\{\infty\}}$ be a
     $v_K$-norm. If $\alpha\rvert_V$ is a norm, then
     $\alpha=\alpha\rvert_V\otimes v_K$. So, the canonical map
     $\gr_{\alpha\rvert_V}(V)\to\gr_\alpha(V\otimes_FK)$ is an isomorphism
     $\gr_{\alpha\rvert_V}(V)\cong\gr_\alpha(V\otimes_FK)$,
     and $\Gamma_V=\Gamma_{V\otimes_FK}$.
\end{corollary}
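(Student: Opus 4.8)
The plan is to derive this from Lemma~\ref{reconstruct}: it suffices to verify condition~(c) there, namely that $\alpha\rvert_V$ is a $v$-norm (which is the hypothesis) and that the canonical map $\chi$ is injective, and then to read off all the remaining assertions from the concluding sentence of that lemma.

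First I would use that $K/F$ being immediate means $\Gamma_K=\Gamma_F$ and $\overline K=\overline F$, which is exactly the statement that the natural degree-preserving inclusion $\gr_v(F)\hookrightarrow\gr_{v_K}(K)$ is an isomorphism. (Checking degree by degree: the component of degree $\lambda$ vanishes on both sides when $\lambda\notin\Gamma_F=\Gamma_K$, and when $\lambda\in\Gamma_F$ both components are one-dimensional over $\overline F=\overline K$, spanned by $\tilde t$ for any $t\in F^\times$ with $v(t)=\lambda$, and the inclusion is surjective because $\overline K=\overline F$ lets us lift residue classes back to the valuation ring of $F$.) Under this identification $\chi$ becomes the graded $\gr_v(F)$-linear map $\gr_{\alpha\rvert_V}(V)\to\gr_\alpha(V\otimes_FK)$ sending $\tilde x$ to $\tilde{x\otimes 1}$.

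Next I would prove $\chi$ is injective. Fix an $F$-splitting base $(e_i)_{i=1}^n$ of $V$ for $\alpha\rvert_V$; by \cite[Cor.~2.3(ii)]{RTW} the images $\tilde e_i$ form a homogeneous $\gr_v(F)$-basis of $\gr_{\alpha\rvert_V}(V)$, so a nonzero homogeneous $\xi$ of degree $\gamma$ can be written $\xi=\tilde x$ with $x=\sum_i e_ic_i\in V$ and $c_i\in F$, where the splitting-base property forces $\alpha\rvert_V(x)=\min_i\bigl(\alpha\rvert_V(e_i)+v(c_i)\bigr)=\gamma$, in particular $x\neq0$. Since $\alpha\rvert_V$ is by definition the restriction of $\alpha$ along the embedding $V\hookrightarrow V\otimes_FK$, this embedding is value-preserving, so $\alpha(x\otimes1)=\gamma$ and hence $\chi(\xi)=\tilde{x\otimes 1}\neq0$. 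Thus $\ker\chi=0$.

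Then Lemma~\ref{reconstruct}~((c)$\Rightarrow$(a)) gives $\alpha=\alpha\rvert_V\otimes v_K$, and its last sentence says $\chi$ is a graded isomorphism; after the identification $\gr_{v_K}(K)=\gr_v(F)$ this is the asserted isomorphism $\gr_{\alpha\rvert_V}(V)\cong\gr_\alpha(V\otimes_FK)$. For the value groups, the same lemma gives $\Gamma_{V\otimes_FK}=\Gamma_V+\Gamma_K$, and since $\Gamma_K=\Gamma_F$ by immediacy while $\Gamma_V$ is a union of cosets of $\Gamma_F$ (because $\alpha\rvert_V(xc)=\alpha\rvert_V(x)+v(c)$ for $c\in F$), we get $\Gamma_{V\otimes_FK}=\Gamma_V$. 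I expect no genuine obstacle: the whole content is that immediacy collapses $\gr_{v_K}(K)$ onto $\gr_v(F)$, reducing the injectivity of $\chi$ to the tautology that $\alpha$ restricts to $\alpha\rvert_V$ on $V$, combined with the splitting-base property; the only mild care needed is to represent an arbitrary homogeneous element of $\gr_{\alpha\rvert_V}(V)$ through the splitting base via \cite[Cor.~2.3(ii)]{RTW}.
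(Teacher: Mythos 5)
Your proof is correct and takes essentially the same route as the paper: immediacy gives $\gr_{v_K}(K)=\gr_v(F)$, so the canonical map $\chi$ becomes the natural injection $\gr_{\alpha\rvert_V}(V)\hookrightarrow\gr_\alpha(V\otimes_FK)$ and Lemma~\ref{reconstruct} delivers all the assertions, with $\Gamma_K=\Gamma_F$ handling the value groups. Your splitting-base verification of the injectivity of $\chi$ is more detail than needed (any nonzero homogeneous element of $\gr_{\alpha\rvert_V}(V)$ is already $\tilde x$ for some $x\in V$ with $\alpha\rvert_V(x)$ equal to its degree, and the value-preserving inclusion $V\hookrightarrow V\otimes_FK$ does the rest), but it is sound.
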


\begin{proof}
Since $v_K$ is immediate over $v$, we have $\grof {v_K}K = \grof vF$, so the
canonical map $\chi$ of Lemma~\ref{reconstruct}(c) is just the injection
$\grof{\al\rest V}V \hookrightarrow \grof {\al}{V\otimes_F K}$ arising
from the canonical inclusion $V\hookrightarrow V\otimes_F K$.  Thus,
the corollary follows from Lemma~\ref{reconstruct}, using $\Gamma_K =
\Gamma_F$ for the last assertion.
\end{proof}

\begin{example}
     Let $(K,v_K)$ be an extension of $(F,v)$ with
     $\overline{F}\subsetneqq\overline{K}$. Let $\xi\in K$ be such that
     $v_K(\xi)=0$ and $\overline{\xi}\notin\overline{F}$, and let $V$ be a
     $2$-dimensional $F$-vector space with base $(e_1,e_2)$.
Let $f = e_1\otimes1+e_2\otimes\xi \in V\otimes_F K$, and consider the
     $v_K$-norm $\alpha$ on $V\otimes_FK$ with splitting base
     $(e_1\otimes1,f)$ such that
     \[
     \alpha(e_1\otimes1)=0\qquad\text{and}\qquad
     \alpha(f)>0.
     \]
     Then, as $e_2 = (f-e_1)\xi^{-1}$, we have for $c_1$, $c_2\in F$
     \begin{align*}
     \alpha\rvert_V(e_1c_1+e_2c_2) \ &= \
    \al\big((e_1(c_1 - \xi^{-1}c_2) + fc_2\big) \ = \
\min\big(\al(e_1)+v_K(c_1-\xi^{-1}c_2), \al(f) + v_K(c_2)\big)\\
    \ &= \ \min\bigl(v(c_1),v(c_2)\bigr).
    \end{align*}
     Hence, $(e_1,e_2)$ is a $v$-splitting base of $V$ for
     $\alpha\rvert_V$, showing that $\al \rest V$ is a
$v$-norm on $V$.
    However, $\alpha\rvert_V\otimes v_K < \al$
since $(\al\rest V\otimes v_K)(f) = \min\big(v_K(1), v_K(\xi)\big) = 0
<\al(f)$. Thus, the first condition in Lemma~\ref{reconstruct}(a)
holds, but not the second.  
The Lemma shows that $V$ does not contain any splitting base for 
the $v_K$-norm $\alpha$ on $V\otimes_FK$.  
Also, the second condition in part (c) of the Lemma fails, since  
the canonical map $\chi$ satisfies
\[
\chi(\tilde{e_1}\otimes1+\tilde{e_2}\otimes\tilde{\xi}) \ = \ 
\tilde{e_1\otimes1} +\tilde{e_2\otimes\xi} \, = \, 0.
\]
\end{example}

We now turn to the descent problem posed at the beginning of this
section, for $(K,v_K)=(F_h,v_h)$ a Henselization of $(F,v)$. The rank
one case is easy:

\begin{proposition}
       \label{rk1.prop}
       Let
       $\alpha\colon V\otimes_{F}F_{h}\to\Gamma\cup\{\infty\}$ be a 
$v_{h}$-norm,
let $\gamma\in \Gamma$, and suppose
        ${\im(\alpha) \subseteq
\gamma+(\Gamma_{F}\otimes_{\mathbb{Z}}\mathbb{Q})}$.
       If $\rk(\Gamma_F)=1$, then $\alpha\vert_{V}$ is a $v$-norm and
$\al = \al \rest V \otimes v_h$.
\end{proposition}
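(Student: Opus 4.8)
The plan is to exhibit a splitting base of the $v_h$-norm $\alpha$ on $V\otimes_FF_h$ that is contained in $V$. Once such a base is produced, Lemma~\ref{reconstruct} (the implication (b)$\Rightarrow$(a)) gives at once that $\alpha|_V$ is a $v$-norm and that $\alpha=\alpha|_V\otimes v_h$, which is exactly the assertion.

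The hypothesis $\rk(\Gamma_F)=1$ enters only through the following density statement, which I would establish first: for every $a\in F_h$ and every $\delta\in\Gamma_F\otimes_{\mathbb Z}\mathbb Q$ there is $c\in F$ with $v_h(a-c)>\delta$. Indeed, the completion $\widehat F$ of the rank-one valued field $(F,v)$ is Henselian, so the Henselization $F_h$ embeds isometrically into $\widehat F$ over $F$; since $F$ is dense in $\widehat F$, it is dense in $F_h$ for the $v$-adic topology, whose neighbourhoods of $a$ are the sets $\{x\in F_h\mid v_h(x-a)>g\}$ with $g\in\Gamma_F$. Choosing $g\in\Gamma_F$ with $g>\delta$ (possible because $\Gamma_F$ is cofinal in its divisible hull) and then $c\in F$ with $v_h(a-c)>g$ gives the claim. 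This is the one place rank one is used: in higher rank the $v$-adic topology is too coarse to approximate up to an arbitrary element of $\Gamma$.

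Next I would fix any $F$-base $(b_1,\dots,b_n)$ of $V$; it is also an $F_h$-base of $V\otimes_FF_h$. Choose a splitting base $(f_1,\dots,f_n)$ of $\alpha$ on $V\otimes_FF_h$ and write $f_j=\tsum_{i=1}^nb_ia_{ij}$ with $a_{ij}\in F_h$. By the hypothesis $\im(\alpha)\subseteq\gamma+\Gamma_F\otimes_{\mathbb Z}\mathbb Q$, all the values $\alpha(f_j)$ and $\alpha(b_i)$ lie in $\gamma+\Gamma_F\otimes_{\mathbb Z}\mathbb Q$, so $\mu:=\max_j\alpha(f_j)-\min_i\alpha(b_i)$ lies in $\Gamma_F\otimes_{\mathbb Z}\mathbb Q$. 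Using the density statement, pick $c_{ij}\in F$ with $v_h(a_{ij}-c_{ij})>\mu$ for all $i$, $j$, and set $e_j=\tsum_{i=1}^nb_ic_{ij}\in V$. Then $\alpha(e_j-f_j)\ge\min_i\bigl(\alpha(b_i)+v_h(c_{ij}-a_{ij})\bigr)>\alpha(f_j)$, so $\widetilde{e_j}=\widetilde{f_j}$ in $\gr_\alpha(V\otimes_FF_h)$ for each $j$. Hence $(\widetilde{e_j})_{j=1}^n$ is a homogeneous $\gr_{v_h}(F_h)$-base of $\gr_\alpha(V\otimes_FF_h)$, so $(e_j)_{j=1}^n$ is a splitting base of $\alpha$ by \cite[Cor.~2.3]{RTW}; being $n$ elements of $V$ that form an $F_h$-base of $V\otimes_FF_h$, they form an $F$-base of $V$. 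Thus $V$ contains a splitting base of $\alpha$, and Lemma~\ref{reconstruct} finishes the proof.

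I expect the only real point to be the density of $F$ in $F_h$ in rank one and its compatibility with the hypothesis on $\im(\alpha)$: that hypothesis is precisely what confines the finitely many values to be dominated to the divisible hull of $\Gamma_F$, where the rank-one density applies. The remaining ingredient — stability of the splitting-base condition under small perturbations, encapsulated in $\widetilde{e_j}=\widetilde{f_j}$ — is routine.
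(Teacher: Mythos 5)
Your proof is correct and takes essentially the same approach as the paper: both hinge on the rank-one density of $F$ in $F_h$ (via the embedding $F_h\hookrightarrow\widehat F$) together with the hypothesis $\im(\alpha)\subseteq\gamma+(\Gamma_F\otimes_{\mathbb{Z}}\mathbb{Q})$, which keeps the approximation thresholds in the divisible hull of $\Gamma_F$ where that density applies. The only difference is cosmetic: you approximate the members of a chosen splitting base and invoke Lemma~\ref{reconstruct}(b)$\Rightarrow$(a), while the paper approximates an arbitrary element of $V\otimes_F F_h$ to show the monomorphism $\gr_{\alpha\rvert_V}(V)\hookrightarrow\gr_\alpha(V\otimes_F F_h)$ is surjective and concludes by a dimension count, the equality $\alpha=\alpha\rvert_V\otimes v_h$ then coming from Cor.~\ref{immediate.lem}.
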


\begin{proof}
       Let $(e_{i})_{i=1}^n$ be an arbitrary $F$-base of $V$ and let
       $x\in V\otimes_{F}F_{h}$,
       \[
       x \, = \, \tsum_{i=1}^ne_{i}\otimes k_{i}
       \qquad\text{for some $k_{i}\in F_{h}$.}
       \]
       Since $\rk(\Gamma_{F})=1$, the field $F$ is dense in $F_{h}$ for
       the topology of the valuation $v_{h}$: see \cite[\S1.6]{Er} or
       use the fact that $F$ is dense in its completion $\widehat F$ and
       that $F_{h}$ embeds in $\widehat F$ by \cite[Th.~17.18]{E}.
Furthermore, $\Gamma_F$ is dense in its divisible hull
$\Gamma_F \otimes_{\mathbb{Z}}\mathbb{Q}$.
For each $i$, $1\le i\le n$, since $\al(x) - \al(e_i\otimes 1)
\in \Gamma_F \otimes_{\mathbb{Z}}\mathbb{Q}$, we may therefore find
       an element $f_{i}\in F$ such that
       \[
       v_{h}(k_{i}-f_{i}) \ > \ \alpha(x)-\alpha(e_{i}\otimes1).
       \]
       Let
       $y=\sum_{i=1}^ne_{i}\otimes f_{i}=
       \sum_{i=1}^ne_{i}f_{i}\otimes1\in V$. Then,
       \[
       \alpha(x-y) \ = \ \alpha\bigl(
       \tsum_{i=1}^ne_{i}\otimes(k_{i}-f_{i})\bigr)
        \ \geq \
       \min \limits_{1\leq i\leq
       n}\bigl(\alpha(e_{i}\otimes(k_{i}-f_{i}))\bigr)
        \ = \ \min\limits_{1\le i\le n}\big(\al(e_i) + v_h(k_i - f_i) \big)
>    \ \alpha(x).
       \]
       Hence,
       \[
       \tilde x \, = \, \tilde y \, \in \, \gr_{\alpha\vert_{V}}(V).
       \]
       This proves that the monomorphism
       $\gr_{\alpha\vert_{V}}(V)\hookrightarrow\gr_{\alpha}(V\otimes_{F}F_{h})$
is an isomorphism. Hence,
as $\al$ is a norm,
\[
\DIM{\grof {\al\rest V}V \,}{\grof v F}  \ = \
\DIM{\grof\al{V\otimes_F F_h} \, }
{\grof{v_h}{F_h}}  \ = \  \DIM{(V\otimes _F F_h)}{F_h}  \, = \,  \DIM V F,
\]
which shows that     $\alpha\vert_{V}$ is a $v$-norm.
\end{proof}

Now, suppose $\Gamma=\Gamma_F\otimes_\mathbb{Z}\mathbb{Q}$, with
$\rk(\Gamma)>1$, and suppose $\Gamma$ contains a convex subgroup $\Delta$ of
rank~$1$. As in \S\ref{comp.sec}, we consider the canonical map
$\varepsilon\colon\Gamma\to\Gamma/\Delta=\Lambda$ and the coarser
valuation
\[
w \, = \, \varepsilon\circ v\colon F\to\Lambda\cup\{\infty\}.
\]
Let $(F_{h,v},v_h)$ be a Henselization of $(F,v)$ and $(F_{h,w},w_h)$
a Henselization of $(F,w)$. Let also
\[
y \, = \, \varepsilon\circ v_h\colon F_{h,v}\to\Lambda\cup\{\infty\}.
\]
By \cite[Cor.~4.1.4, p.~90]{EP}, the valuation $y$ is Henselian, hence we
may assume $(F_{h,w},w_h)\subseteq(F_{h,v},y)$.

Let $\alpha\colon V\otimes_FF_{h,v}\to\Gamma\cup\{\infty\}$ be a
$v_h$-norm, and let
\[
\beta \, = \, \varepsilon\circ\alpha\colon
V\otimes_FF_{h,v}\to\Lambda\cup\{\infty\}.
\]
By Prop.~\ref{comp.prop}, the map $\beta$ is a $y$-norm.

\begin{proposition}
     \label{normcrit.prop}
     If $\beta\rvert_V$ is a $w$-norm and $\beta=\beta\rvert_V\otimes y$,
     then $\alpha\rvert_V$ is a $v$-norm on $V$ and $\al = \al \rest V
\otimes v_h$.
\end{proposition}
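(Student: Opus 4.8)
The plan is to reduce the statement to the rank–one case \Prop~\ref{rk1.prop} by passing to the residue level for the rank–one convex subgroup $\Delta$, using the composition machinery of \S\ref{comp.sec}. Write $E=\overline{F_{h,v}}^{\,y}$ for the residue field of the Henselian valuation $y$ on $F_{h,v}$, and let $u\colon\overline{F}^w\to\Delta\cup\{\infty\}$ and $u'\colon E\to\Delta\cup\{\infty\}$ be the valuations induced by $v$ and $v_h$ respectively; thus the value groups of $u$ and $u'$ are both $\Delta_F=\Delta\cap\Gamma_F$ and $\overline{\overline{F}^w}^{\,u}=\overline{E}^{\,u'}=\overline{F}^v$. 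Since $\Gamma=\Gamma_F\otimes_{\mathbb{Z}}\mathbb{Q}$, the convex subgroup $\Delta$ is the divisible hull of $\Delta_F$, so $\rk(\Delta_F)=\rk(\Delta)=1$.

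The first step is to record two preliminary facts. As $y$ is a Henselian coarsening of $v_h$, the induced valuation $u'$ on $E$ is Henselian; and since $F_{h,v}/F$ is algebraic and $\overline{F_{h,w}}^{\,w_h}=\overline{F}^w\subseteq E$, the valued field $(E,u')$ is an algebraic Henselian extension of $(\overline{F}^w,u)$ lying over $u$, so it is in fact a Henselization of $(\overline{F}^w,u)$ --- this is the standard fact that the residue field of a Henselization for a coarsening is the Henselization of the residue field (see \cite{EP}). Secondly, the inclusion $V\hookrightarrow V\otimes_FF_{h,v}$ is value preserving both for $\alpha\rvert_V,\alpha$ and for $\beta\rvert_V,\beta$, so for each $\lambda\in\Lambda$ it induces an $\overline{F}^w$-linear injection $j_\lambda\colon V^{\beta\rvert_V}_\lambda\hookrightarrow(V\otimes_FF_{h,v})^{\beta}_\lambda$ with $\alpha_\lambda\circ j_\lambda=(\alpha\rvert_V)_\lambda$. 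Using the hypothesis $\beta=\beta\rvert_V\otimes y$, a splitting base of $\beta\rvert_V$ on $V$ is also a splitting base of $\beta$ on $V\otimes_FF_{h,v}$ (Lemma~\ref{reconstruct}); comparing homogeneous components of degree $\lambda$, and using that the value group of $y$ is $\Lambda_F$, one sees that $j_\lambda$ extends to an $E$-linear isomorphism $V^{\beta\rvert_V}_\lambda\otimes_{\overline{F}^w}E\iso(V\otimes_FF_{h,v})^{\beta}_\lambda$.

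Now for the descent. Since $\alpha$ is a $v_h$-norm and $\beta=\varepsilon\circ\alpha$, Prop.~\ref{comp.prop} applied over $F_{h,v}$ shows that every $\alpha_\lambda$ is a $u'$-norm on $(V\otimes_FF_{h,v})^{\beta}_\lambda$. Transporting this along the isomorphism above and using $(E,u')=(\overline{F}^w,u)_h$, we obtain that $\alpha_\lambda$ is a $u_h$-norm on $V^{\beta\rvert_V}_\lambda\otimes_{\overline{F}^w}(\overline{F}^w)_h$ whose image lies in $\lambda$, a coset of $\Delta=\Delta_F\otimes_{\mathbb{Z}}\mathbb{Q}$; as $\rk(\Delta_F)=1$, Prop.~\ref{rk1.prop} applies and yields that $\alpha_\lambda\rvert_{V^{\beta\rvert_V}_\lambda}=(\alpha\rvert_V)_\lambda$ is a $u$-norm and that $\alpha_\lambda=(\alpha\rvert_V)_\lambda\otimes u'$. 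By Prop.~\ref{comp.prop} it suffices to do this for representatives of the finitely many cosets of the value set of $\beta\rvert_V$ modulo $\Lambda_F$, the other $\lambda$ contributing $V^{\beta\rvert_V}_\lambda=0$. Since $\beta\rvert_V$ is a $w$-norm by hypothesis, Prop.~\ref{comp.prop} applied over $F$ now gives that $\alpha\rvert_V$ is a $v$-norm.

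Finally, to get $\alpha=\alpha\rvert_V\otimes v_h$ I would use Lemma~\ref{reconstruct}: it is enough to exhibit a splitting base of $\alpha$ on $V\otimes_FF_{h,v}$ that is contained in $V$. As $\alpha\rvert_V$ is a $v$-norm, Prop.~\ref{comp.prop} provides a splitting base $(e_i)_{i=1}^n\subseteq V$ of $\alpha\rvert_V$ on $V$; by the splitting-base criterion for a composition (the discussion following Prop.~\ref{comp.prop}) the relevant homogeneous images $\widetilde{e_i}^{\,\beta\rvert_V}$, suitably scaled, form a splitting base of each $(\alpha\rvert_V)_\lambda$. By $\beta=\beta\rvert_V\otimes y$ this $(e_i)$ is a splitting base of $\beta$ on $V\otimes_FF_{h,v}$, and since $\alpha_\lambda=(\alpha\rvert_V)_\lambda\otimes u'$ and a splitting base remains one under the scalar extension $\overline{F}^w\to E$, the same $e_i$ give splitting bases of all the $\alpha_\lambda$; hence $(e_i)$ is a splitting base of $\alpha$ on $V\otimes_FF_{h,v}$ lying in $V$, and Lemma~\ref{reconstruct} yields $\alpha=\alpha\rvert_V\otimes v_h$. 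The delicate part of the argument is the middle step: identifying $(E,u')$ with the Henselization $(\overline{F}^w,u)_h$, and promoting the hypothesis $\beta=\beta\rvert_V\otimes y$ to the compatible isomorphism $(V\otimes_FF_{h,v})^{\beta}_\lambda\cong V^{\beta\rvert_V}_\lambda\otimes_{\overline{F}^w}E$, so that Prop.~\ref{rk1.prop} can be invoked verbatim at the residue level.
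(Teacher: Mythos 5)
Your argument is essentially the paper's proof: reduce via Prop.~\ref{comp.prop} to the component maps $(\alpha\rvert_V)_\lambda$, use the hypothesis $\beta=\beta\rvert_V\otimes y$ together with the equality of value groups $\Lambda_{F_{h,v},y}=\Lambda_F$ to identify $(V\otimes_FF_{h,v})^\beta_\lambda$ with $V^{\beta\rvert_V}_\lambda\otimes_{\overline{F}^w}\overline{F_{h,v}}^y$, invoke the fact that $(\overline{F_{h,v}}^y,u_h)$ is a Henselization of $(\overline{F}^w,u)$ (the paper quotes Morandi for exactly this), and then apply Prop.~\ref{rk1.prop} componentwise before reassembling with Prop.~\ref{comp.prop}. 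Two minor remarks: your parenthetical justification of the Henselization fact is not valid as written --- an algebraic Henselian extension lying over a valuation need not be a Henselization (a separable closure already fails this) --- although the fact itself is the standard one you name and the one the paper uses; and your closing splitting-base construction for $\alpha=\alpha\rvert_V\otimes v_h$, while it can be made to work, is unnecessary: since $(F_h,v_h)$ is immediate over $(F,v)$ and $\alpha\rvert_V$ is now known to be a $v$-norm, Cor.~\ref{immediate.lem} gives the equality at once, which is how the paper concludes.
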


\begin{proof}
     As observed in \S\ref{comp.sec}, the valuation $v$ induces a
     valuation $u$ on the residue field $\overline{F}^w$,
     \[
     u\colon\overline{F}^w\to\Delta\cup\{\infty\}.
     \]
Note that the value group of $u$ is $\Delta_F = \Gamma_F \cap \Delta$ and, as
$\Delta$ is divisible and torsion-free,
\[
\Delta_F\otimes_\mathbb{Z}\mathbb{Q} \ = \
\big(\Gamma_F\otimes_\mathbb{Z}\mathbb{Q}
\big)   \cap   \big(\Delta\otimes_\mathbb{Z}\mathbb{Q}\big) \ = \
\Gamma \cap \Delta \, = \, \Delta.
\]
     Let
     $\Lambda_V=\beta\rvert_V(V\setminus\{0\})\subseteq\Lambda$. Clearly,
     $\beta\rvert_V=\varepsilon\circ(\alpha\rvert_V)$. In order to show
     $\alpha\rvert_V$ is a norm, it therefore suffices, by
     Prop.~\ref{comp.prop}, to show that each map
\begin{equation} \label{alpharest}
     (\alpha\rvert_V)_\lambda\colon V_\lambda^{\beta\rvert_V}\to
     \lambda\cup\{\infty\}, \qquad\text{for $\lambda\in\Lambda_V$},
\end{equation}
     is a $u$-norm. To simplify notation, we write $\Vlam$ for
$V_\lambda^{\beta\rest V}$.
Note that the canonical inclusion $V \hookrightarrow
V\otimes_F F_{h,v}$ is compatible with the respective value functions
$\beta \rest V$ and $\beta$ so yields an injection $\Vlam
\hookrightarrow(V\otimes _F F_{h,v})_\lambda^\beta$; let
$V_\lambda'$ denote the image of $V_\lambda$.
     Then, clearly
$\al_\lambda \rest{\Vlam'} \cong (\al\rest V)_\lambda$.

     Let $u_h\colon\overline{F_{h,v}}^y\to\Delta\cup\{\infty\}$ be the
     valuation induced by $v_h$. As observed by Morandi \cite[p.~239]{M},
     $(\overline{F_{h,v}}^y,u_h)$ is a Henselization of
     $(\overline{F}^w,u)$. Since $\alpha$ is a $v_h$-norm,
     Prop.~\ref{comp.prop} shows that
     \[
     \alpha_\lambda\colon (V\otimes_FF_{h,v})_\lambda^\beta
     \to\lambda\cup\{\infty\}
     \]
     is a $u_h$-norm for every $\lambda\in\Lambda_V$. Since
      $(F_{h,v},y)$ is an inertial
     extension of $(F_{h,w},w_h)$ by \cite[p.~239]{M}, we have
\[
\grof y{F_{h,v}} \ \cong \ \grof {w_h} {F_{h,w}}
\otimes_{\grof {w_h} {F_{h,w}}_0}\grof y {F_{h,v}}_0
\ = \ \grof {w} {F}
\otimes_{\grof {w} {F}_0}\grof y {F_{h,v}}_0.
\]
    Because $\beta=\beta\rvert_V\otimes y$, this yields graded isomorphisms
\begin{align*}
\grof\beta{V\otimes_FF_{h,v}} \
&\cong \ \grof{\beta\rest V} V \otimes_{\grof w F} \grof y {F_{h,v}} \
\cong \ \grof{\beta\rest V} V \otimes_{\grof w F_0}\grof y {F_{h,v}}_0 \\
&\cong \ \grof{\beta\rest V} V\otimes_{\overline F^w}\overline {F_{h,v}}^y.
\end{align*}
For any $\lambda \in \Lambda_F$, when we restrict these graded isomorphisms
to the
$\lambda$-component we obtain the $\overline {F_{h,v}}^y$-vector space
isomorphism
     \[\psi\colon
     (V\otimes_FF_{h,v})_\lambda^\beta \ \iso \
     \Vlam\otimes_{\overline{F}^w}\overline{F_{h,v}}^y.
     \]
Let $\wh\al = \al_\lambda \circ \psi^{-1}\colon
\Vlam\otimes_{\overline{F}^w}\overline{F_{h,v}}^y
\to \lambda \cup \{\infty\}$, which is  the $u_h$-value function on $\im(\psi)$
corresponding to~$\al_\lambda$~on the domain of $\psi$. Since $\al_\lambda$
is a $u_h$-norm, so is $\wh \al$.  Because $(\ov{F_{h,v}}^y, u_h)$ is
a Henselization of~$(\ov F^w,u)$ and $\lambda$ is a coset of
$\Delta = \Delta_F \otimes_\mathbb{Z}\mathbb{Q}$, which has rank $1$, with
$\Delta_F$ the value group of $u$,  Prop.~\ref{rk1.prop} applies to~$\wh \al$,
and shows that $\wh\al\rest{\Vlam}$ is a $u$-norm.
Note that $\psi$ maps the $V_\lambda'$ defined above after
\eqref{alpharest} to the copy of $V_\lambda$ in $\im(\psi)$.
So, $\al_\lambda\rest{V_\lambda'} \cong \wh\al\rest \Vlam$. But, we saw
above that $(\al\rest V)_\lambda \cong \al_\lambda\rest{V_\lambda'}$.
Since $\wh\al\rest{\Vlam}$ is a $u$-norm, these isomorphisms show that
$(\al\rest V)_\lambda$ is also a $u$-norm.
Thus, by Prop.~\ref{comp.prop} $\al\rest V$ is a $v$-norm; then
$\al = \al \rest V \otimes v_h$ by Cor.~\ref{immediate.lem}.
\end{proof}

The following is an example of a norm on a Henselization that does not descend
to a norm.

\begin{example}
     \label{notnorm.ex}
     Let $k$ be any field with $\charac (k)\neq2$, and let $F=k(x,y)$ with
     $x$ and $y$ algebraically independent over $k$. Let $v$ be the
     valuation on $F$ obtained by restriction from the canonical
     Henselian valuation on $k((x))((y))$, so
     $\Gamma_F=\mathbb{Z}\times\mathbb{Z}$ and $\overline{F}=k$. Let
     $(F_h,v_h)$ be a Henselization of $(F,v)$. Let
     $A=\Bigl(\frac{1+x,\, y}{F}\Bigr)$, a quaternion division algebra over
     $F$, and let $A_h=A\otimes_FF_h$. The algebra $A_h$ is split since
     $1+x\in F_h^{\times2}$. Therefore, we may find $v_h$-gauges on $A_h$
     that are unramified, in the sense that $\Gamma_{A_h}=\Gamma_F$. Fix
     such a $v_h$-gauge $\alpha$. We claim that $\alpha\rvert_A$ is not a
     $v$-norm on $A$.

     Suppose the contrary. Then $\gr_{\alpha\rvert_A}(A)=\gr_\alpha(A_h)$
     by Lemma~\ref{immediate.lem}, so $\alpha\rvert_A$ is a
     $v$-gauge. Consider the convex subgroup
     $\Delta=\mathbb{Z}\times\{0\}\subseteq\Gamma_F$ and the canonical
     epimorphism
     \[
     \varepsilon\colon\Gamma_F\to\Gamma_F/\Delta \, = \, \mathbb{Z}.
     \]
     Let $w=\varepsilon\circ v\colon F\to\mathbb{Z}\cup\{\infty\}$, which
     is the $y$-adic valuation on $F$, and
     $\beta=\varepsilon\circ\alpha\colon
     A_h\to\mathbb{Z}\cup\{\infty\}$. Proposition~\ref{compgauge.prop}
     shows that $\beta\rvert_A$ is a tame $w$-gauge on $A$. However, the
     $y$-adic valuation $w$ extends to $A$, so by \cite[Cor.~3.4]{TWgr}
     $\beta\rvert_A$ is the (unique) valuation on $A$ that extends
     $w$. In particular, if $j\in A$ satisfies $j^2=y$ we must have
     $\beta\rvert_A(j)=\frac12$. This is a contradiction since
     $\beta\rvert_A$ is unramified.
\end{example}

\section{Non-Henselian valuations}
\label{nonHensel.sec}

Let $(F,v)$ be a valued field and let $A$ be a finite-dimensional simple
$F$-algebra with an involution $\sigma$. Let $K=Z(A)$, and assume
$F$ is the subfield of $K$ fixed under $\sigma$. Fix a
Henselization $(F_h,v_h)$ of $(F,v)$.

\begin{theorem}
     \label{mainnonHensel.thm}
Suppose $A$ is split by the
maximal tamely ramified extension of $F_h$. Moreover, if
${\charac(\overline{F})=2}$ suppose that $\sigma$ is not an orthogonal
involution.Then, the
  following conditions are equivalent:
     \begin{enumerate}
     \item[(a)]
     $\sigma\otimes\id_{F_h}$ is an anisotropic involution on $A\otimes _F F_h$;
     \item[(b)]
     there exists a $\sigma$-special $v$-gauge $\varphi$ on $A$
i.e.,
     $\varphi(\sigma(x)x)=2\varphi(x)$ for all $x\in A$.
     \end{enumerate}
     When they hold,  $\varphi$ is the unique $v$-gauge on
     $A$ that is invariant under $\sigma$, it is tame, and
     its value group lies in the divisible hull of $\Gamma_F$.
\end{theorem}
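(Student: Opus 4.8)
The plan is to reduce everything to the Henselian case (Theorem~\ref{mainHensel.thm}) by passing to $A_h := A\otimes_F F_h$ with $\sigma_h := \sigma\otimes\id_{F_h}$, and then to descend the resulting gauge from $A_h$ to $A$ by means of \S\S\ref{comp.sec}--\ref{descent.sec}. For (b)$\Rightarrow$(a): if $\varphi$ is a $\sigma$-special $v$-gauge on $A$, then by Corollary~\ref{cor:compatext} the surmultiplicative $v_h$-norm $\varphi\otimes v_h$ on $A_h$ is invariant under $\sigma_h$, and since $F_h/F$ is immediate the isomorphism~\eqref{eq:griso} identifies $\bigl(\gr_{\varphi\otimes v_h}(A_h),(\sigma_h)^\sim\bigr)$ with $\bigl(\gr_\varphi(A),\tilde\sigma\bigr)$; as $\tilde\sigma$ is anisotropic by Proposition~\ref{eqcond.prop}(b), Proposition~\ref{eqcond.prop} gives that $\varphi\otimes v_h$ is $\sigma_h$-special, so $\sigma_h$ is anisotropic.

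For (a)$\Rightarrow$(b) I would first note that (a) forces $v$ to extend uniquely to $K=Z(A)$: otherwise $K\otimes_F F_h\cong F_h\times F_h$, $A_h$ is a product of two simple algebras interchanged by $\sigma_h$, and any such involution is isotropic. Hence $A_h$ is simple with center the field $K_h := K\otimes_F F_h$, $F_h$ is its subfield fixed by $\sigma_h$, and the assumption that $A$ is split by the maximal tamely ramified extension of $F_h$ is exactly the tameness hypothesis of \S\ref{sec:Hensel} for $A_h$; the restriction on orthogonal involutions carries over. Since $\sigma_h$ is anisotropic, Theorem~\ref{mainHensel.thm} yields the unique $\sigma_h$-special $v_h$-gauge $\varphi_h$ on $A_h$; it is tame and its value set lies in the divisible hull of $\Gamma_F$. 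Set $\varphi := \varphi_h\rvert_A$. Then $\varphi$ is $\sigma$-special for free, since $\sigma(x)x\in A$ gives $\varphi\bigl(\sigma(x)x\bigr)=\varphi_h\bigl(\sigma(x)x\bigr)=2\varphi_h(x)=2\varphi(x)$ for $x\in A$; and $\varphi$ is surmultiplicative by restriction, so --- using Corollary~\ref{immediate.lem}, which identifies $\gr_\varphi(A)$ with $\gr_{\varphi_h}(A_h)$ as soon as $\varphi$ is a $v$-norm, whence $\varphi$ inherits graded semisimplicity and tameness and $\Gamma_A=\Gamma_{A_h}$ --- the entire statement comes down to one point.

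\emph{The main obstacle is to show that $\varphi_h\rvert_A$ is a $v$-norm on $A$.} I would prove this by induction on $\rk(\Gamma_F)$ (reducing first to the case of finite rank if necessary), taking $\Gamma$ to be the divisible hull of $\Gamma_F$. If $\rk(\Gamma_F)\le1$ this is Proposition~\ref{rk1.prop}, the hypothesis on $\im(\varphi_h)$ being automatic (or the case is trivial if $v$ is trivial). If $\rk(\Gamma_F)\ge2$, pick a rank-one convex subgroup $\Delta\subseteq\Gamma=\Gamma_F\otimes_\Z\Q$, let $w=\varepsilon\circ v$ be the corresponding coarsening (so the rank strictly decreases), $y=\varepsilon\circ v_h$ the induced Henselian valuation on $F_h$, and fix a Henselization $(F_{h,w},w_h)\subseteq(F_h,y)$ of $(F,w)$, so that $(F_h,y)/(F_{h,w},w_h)$ is inertial (Morandi~\cite{M}; see also \cite[Cor.~4.1.4, p.~90]{EP}). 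By the computation above together with Proposition~\ref{compgauge.prop}, $\beta:=\varepsilon\circ\varphi_h$ is a tame $\sigma_h$-special $y$-gauge on $A_h$, hence by Theorem~\ref{mainHensel.thm} over the Henselian field $(F_h,y)$ it is the unique $y$-gauge on $A_h$ invariant under $\sigma_h$. One checks that the theorem's hypotheses hold over $(F,w)$: $A$ is split by the maximal tamely ramified extension of $F_{h,w}$ (which coincides with that of $F_h$, as $F_h/F_{h,w}$ is inertial); $\sigma$ is not orthogonal if $\charac(\ov F^w)=2$, since then $\charac(\ov F^v)=2$; and $\sigma\otimes\id_{F_{h,w}}$ is anisotropic, being the restriction of $\sigma_h$ to $A\otimes_F F_{h,w}\subseteq A_h$. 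The induction hypothesis then gives a $\sigma$-special tame $w$-gauge $\varphi'$ on $A$, and $\varphi'\otimes y$ is a $y$-gauge on $A_h$ invariant under $\sigma_h$ (Corollary~\ref{cor:compatext}, using tameness of $\varphi'$), so $\varphi'\otimes y=\beta$. Restricting to $A$ gives $\beta\rvert_A=\varphi'$, which is a $w$-norm, and $\beta=\varphi'\otimes y=(\beta\rvert_A)\otimes y$; these are exactly the hypotheses of Proposition~\ref{normcrit.prop}, which delivers that $\varphi_h\rvert_A$ is a $v$-norm, completing the induction.

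For uniqueness, any $v$-gauge $\psi$ on $A$ invariant under $\sigma$ yields, by Corollary~\ref{cor:compatext}, a $v_h$-gauge $\psi\otimes v_h$ on $A_h$ invariant under $\sigma_h$, hence $\psi\otimes v_h=\varphi_h$ by Theorem~\ref{mainHensel.thm} and so $\psi=\varphi$. The one place that needs real work is the norm-descent step in the induction; everything else is base-change bookkeeping supported by \S\S\ref{comp.sec}--\ref{descent.sec}.
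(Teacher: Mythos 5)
Your treatment of (b)$\Rightarrow$(a), of uniqueness and tameness, and of the finite-rank induction (coarsen by a rank-one convex subgroup $\Delta$, identify $\varepsilon\circ\varphi_h$ with $\varphi'\otimes y$ via the uniqueness part of Th.~\ref{mainHensel.thm} over the Henselian field $(F_h,y)$ and Prop.~\ref{compgauge.prop}, then conclude with Prop.~\ref{normcrit.prop}) is essentially the paper's proof; the only structural difference is that you run the induction on the full theorem at lower rank, where the paper inducts on the descent claim itself, and that is harmless.

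The genuine gap is the parenthetical ``reducing first to the case of finite rank if necessary.'' No argument is offered for this reduction, and none can come from the rank induction itself: if $\rk(\Gamma_F)$ is infinite, coarsening by a rank-one convex subgroup leaves a value group of infinite rank, so there is no well-founded descent and your induction never gets started. Since the theorem is asserted for an arbitrary valued field $(F,v)$, the infinite-rank case must be handled, and it requires a separate idea, which in the paper occupies the entire last paragraph of the proof. The paper's argument: let $F_0$ be the prime field, $F_1$ the subfield of $F$ generated over $F_0$ by the structure constants of $A$ and of $\sigma$ relative to an $F$-base of $A$, and $A_1$ the $F_1$-span of that base, so that $A_1\otimes_{F_1}F=A$ and $\sigma$ restricts to $A_1$. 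Using that $(F_h,v_h)$ is the direct limit of the Henselizations of the finitely generated subfields of $F$ containing $F_1$, one finds a finitely generated $F_2\supseteq F_1$ such that $A_{2,h}:=A_1\otimes_{F_1}(F_2)_h$ contains a splitting base of $\varphi_h$. Then Lemma~\ref{reconstruct} shows $\varphi_h\rvert_{A_{2,h}}$ is a norm with $\varphi_h=\varphi_h\rvert_{A_{2,h}}\otimes v_h$; since $\rk(\Gamma_{F_2})\le\trdeg(F_2/F_0)<\infty$, the finite-rank case applies over $F_2$ and gives that $\varphi_2:=\varphi_h\rvert_{A_2}$ is a $v_{F_2}$-norm; Cor.~\ref{immediate.lem} then yields $\varphi_h\rvert_{A_{2,h}}=\varphi_2\otimes v_{F_2,h}$, hence $\varphi_h=\varphi_2\otimes v_h$ and $\varphi_h\rvert_A=\varphi_2\otimes v$ is a $v$-norm. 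Without this (or an equivalent) argument, your proof establishes the theorem only for valuations of finite rank.
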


\begin{proof}
     Let $A_h=A\otimes_FF_h$ and
     $\sigma_{h}=\sigma\otimes\id_{F_{h}}$. If $\varphi$ is a 
$\sigma$-special $v$-gauge
     on $A$, then by Prop.~\ref{eqcond.prop}
$\varphi$~is~invariant under $\sigma$ and $\tilde\sigma$ is anisotropic
     on $\gr_\varphi(A)$. By Cor.~\ref{cor:compatext}, $\varphi\otimes v_h$
is invariant under $\sigma_h$.
Since
     $\gr_{\varphi\otimes v_h}(A_h)\cong\grof\varphi A \otimes _{\grof vF}
\grof{v_h}{F_h} \cong\gr_\varphi(A)$ and
     $\tilde{\sigma_h}\cong\tilde\sigma$, it follows that
$\tilde{\sigma_h}$ is anisotropic, hence $\sigma_h$~must also be
     anisotropic, proving (b)~$\Rightarrow$~(a).

     Now, suppose (a) holds. Let $\varphi_1$ and $\varphi_2$ be
     $v$-gauges on $A$ that are each invariant under $\sigma$.   Then,
by Cor.~\ref{cor:compatext} each  $\varphi_i\otimes v_h$
is a surmultiplicative $v_h$-norm
     on $A_h$ which is invariant under $\sigma_h$.
Moreover, $\varphi_i\otimes v_h$ is a gauge on $A_h$
since $\gr_{\varphi_i\otimes v_h}(A_h)
\cong\grof{\varphi_i} A \otimes _{\grof vF}
\grof{v_h}{F_h} \cong\gr_{\varphi_i}(A)$
and $\varphi_i$ is a gauge on~$A$.
Since $\sigma_h$~is assumed anisotropic, the uniqueness part of
     Th.~\ref{mainHensel.thm}  (applied to $\sigma_h$ on $A_h$)
    yields $\varphi_1\otimes
     v_h=\varphi_2\otimes v_h$, hence
     $\varphi_1=\varphi_2$. Th.~\ref{mainHensel.thm}  also shows that
     $\varphi_1\otimes v_h$ is tame and satisfies
     $\varphi_1\bigl(\sigma_h(x)x\bigr)=2\varphi_1(x)$ for all $x\in
     A_h$, hence $\varphi_1$ is tame and satisfies  condition~(b).
Furthermore, $\Gamma_{A, \varphi_1} = \Gamma_{A_h, \varphi_1\otimes v_h}$
which lies in the divisible hull of $\Gamma_{F_h} = \Gamma_F$ by
Th.~\ref{mainHensel.thm}.

     Thus, it only remains to prove the existence of a $v$-gauge on $A$
     invariant under $\sigma$, assuming $\sigma_h$ is anisotropic.
Note first that $K\otimes_F F_h$ is a field.
For, otherwise, as $K$ is Galois over $F$ with $\DIM KF = 2$,
$K\otimes_F F_h$ would be a direct sum of two fields, and the nontrivial
$F_h$-automorphism $\sigma\rest{K\otimes_F F_h}$ must permute the two
primitive idempotents of $K\otimes_F F_h$, call them $e_1$ and $e_2$.
Then, $\sigma_h(e_1)e_1 = e_2e_1 =0$;  but, this cannot happen as $\sigma_h$ is
anisotropic.  Since $K\otimes _F F_h$ is a field and $K = Z(A)$,
$A_h \cong A\otimes _K(K\otimes _F F_h)$ is a central simple
$K\otimes _F F_h$-algebra.

    Because $A_h$ is simple, $\sigma_h$ is anisotropic, and
$v_h$ is Henselian, Th.~\ref{mainHensel.thm}  yields a
     $\sigma_h$-invariant $v_h$-gauge $\varphi_h$ on $A_h$ whose value
     set lies in the divisible hull of $\Gamma_{F_h} = \Gamma_F$.
The restriction
     $\varphi=\varphi_h\rvert_A$ is clearly a $\sigma$-invariant
     $v$-value function whose value set lies in the divisible hull of
     $\Gamma_F$. Henceforth, we may thus assume
     $\Gamma=\Gamma_F\otimes_\mathbb{Z}\mathbb{Q}$. If we show
that $\varphi$
     is a $v$-norm, then Cor.~\ref{immediate.lem} yields
$\varphi_h = \varphi \otimes v_h$, so
     $\gr_\varphi(A)=\grof{\varphi\otimes v_h} {A_h} =
\gr_{\varphi_h}(A_h)$, hence $\varphi$ is a
     $v$-gauge, and the proof will be  complete.

     Suppose first that $\rk(\Gamma_F)<\infty$. We then argue by induction on
     $\rk(\Gamma_F)$. If $\rk(\Gamma_F)=1$, then
     Prop.~\ref{rk1.prop} shows that $\varphi$ is a $v$-norm. So, we may
assume
     $\rk(\Gamma_F)>1$.  Let $\Delta\subseteq\Gamma$ be a convex subgroup
     of rank~$1$ and let
     \[
     \varepsilon\colon\Gamma\to\Lambda \, = \, \Gamma/\Delta
     \]
     be the canonical epimorphism. Let $w=\varepsilon\circ v$ and
     $y=\varepsilon\circ v_h$, to agree with the notation of
     \S\ref{descent.sec}. So, $w$~has value group $\Lambda_F =
(\Gamma_F +\Delta)/\Delta$, and $\Lambda = \Lambda_F\otimes_
\mathbb{Z}\mathbb{Q}$, which has rank $\rk(\Gamma_F) -1$.
Let  $(F_{h,w},w_h)\subseteq(F_h,y)$ be a
     Henselization of $(F,w)$. Since $\sigma_h$ is anisotropic, its restriction
      $\sigma\otimes\id_{F_{h,w}}$ is an anisotropic  involution on the subring
     $A\otimes_FF_{h,w}$ of $A_h$. Since $A_h\cong (A\otimes_FF_{h,w})
\otimes_{F_{h,w}} F_h$ and $A_h$ is simple, $A\otimes_FF_{h,w}$ must
also be simple.
Therefore, Th.~\ref{mainHensel.thm} applies,  yielding  a
     $w_h$-gauge $\psi_h$ on $A\otimes_FF_{h,w}$  invariant under
     $\sigma\otimes\id_{F_{h,w}}$.  By induction,
     $\psi_h\rvert_A$ is a $w$-gauge on $A$ invariant under
     $\sigma$. The same argument as for $\varphi_1$ above shows that
the gauge $\psi\rest A$ is tame.  Therefore, by \cite[Cor.~1.26]{TWgr}
    $\psi_h\rvert_A\otimes y$ is a
     $y$-gauge on $A_h$, which is $\sigma_h$-invariant by
Cor.~\ref{cor:compatext}.  But, $\varepsilon\circ\varphi_h$ is
also a $y$-gauge on $A_h$, by Prop.~\ref{compgauge.prop} since
$\varphi$ is a gauge, and $\varepsilon\circ\varphi_h$ is invariant
under $\sigma_h$ because $\varphi_h$ is.
    By the uniqueness given in
     Th.~\ref{mainHensel.thm}, it follows that
     $\varepsilon\circ\varphi_h=\psi_h\rvert_A\otimes y$. Restricting to
     $A$, we also have
     $\varepsilon\circ\varphi=\psi_h\rvert_A$, which is a $w$-gauge so
a $w$-norm on $A$. Furthermore,
$(\varepsilon\circ\varphi) \otimes y = \psi_h\rvert_A \otimes y =
\varepsilon\circ\varphi_h$.
    Prop.~\ref{normcrit.prop} with $\al = \varphi_h$
     then shows that $\varphi$ is a $v$-norm. The theorem is thus proved
     if $\rk(\Gamma_F)<\infty$.

     For the rest of the proof, assume that $\Gamma_F$ has infinite
     rank. Let $(a_{i})_{i=1}^n$ be an $F$-base of $A$. Write
     $a_{i}a_{k}=\sum_{l}c_{ikl}a_{l}$ for some $c_{ikl}\in F$ and
     $\sigma(a_{i})=\sum_{k}d_{ik}a_{k}$ for some $d_{ik}\in F$. Let $F_{0}$
     be the prime subfield of~$F$, and let $F_{1}=F_{0}(\{c_{ikl},d_{ik}\mid
     1\leq i,k,l\leq n\})\subseteq F$. Let $A_{1}$ be the $F_{1}$-span of
     the $a_{i}$, which is an $F_1$-algebra. 
We have $A_{1}\otimes_{F_{1}}F=A$ and $\sigma$ restricts
     to an involution $\sigma_{1}$ on $A_{1}$.
Now, let $(e_i)_{i=1}^n$
be a splitting base of $A_h$ for  the $v$-norm $\varphi_h$.  We need to
enlarge $F_1$ to capture the $e_i$ in the Henselization:  let $L$ be any
field with $F_1\subseteq L \subseteq F$ and $L$ finitely generated over
$F_1$, and let $v_L = v\rest L$.  Since $F_h$~is Henselian, there is a
unique Henselization $(L_h, v_{L,h})$ of $(L, v_L)$ inside $(F_h, v_h)$  by
\cite[Th.~5.2.2(2), p.~121]{EP}.
Because $F$~is the direct limit of such fields~$L$,
the direct limit over such $L$ of the $(L_h,v_{L,h})$ is a
Henselian valued field $(M, v_M)$ with $F \subseteq M\subseteq F_h$ and
$v_h\rest{M} = v_M$.  Therefore, $(M, v_M) = (F_h, v_h)$ by the
uniqueness of  the  Henselization.
Since $A_h = A_1\otimes _{F_1}M$,
there is a field $F_2$ finitely generated over~$F_1$ (hence also over
$F_0$) such that
$e_1$, \ldots, $e_n\in A_1\otimes _{F_1}(F_2)_h$.  Let
\[
A_2 =
A_1\otimes_{F_1}F_2\subseteq A,\qquad
A_{2,h} = A_1\otimes _{F_1}(F_2)_h= A_2\otimes_{F_2} (F_2)_h\subseteq
A_h.
\]
Note that $A_2$ is a simple
$F_2$-algebra since $A = A_{F_2}\otimes_{F_2}F$ and $A$ is simple.
Let $\sigma _2 = \sigma\rest {A_2}$, which is an involution on $A_2$,
and let $\sigma_{2,h} = \sigma_h\rest {A_{2,h}} =
\sigma_2 \otimes \id_{(F_2)_h}$, which is an anisotropic involution on~
$A_{2,h}$.
Let $\varphi_2 = \varphi_h\rest {A_2}$ and $\varphi_{2,h} = \varphi_h\rest
{A_{2,h}}$.  Since $A_h = A_{2,h} \otimes _{(F_2)_h}F_h$ and $e_1$, \ldots,
$e_n\in A_{2,h}$,
Lemma~\ref{reconstruct} says that  $\varphi_{2,h}$ is a $v_{F_2,h}$-norm
on $A_{2,h}$ and $\varphi_h = \varphi_{2,h} \otimes v_h$.  Now, $F_2$
is finitely generated over the prime field $F_0$, so
$\rk(\Gamma_{F_2,v_{F_2}}) \le \trdeg(F_2/F_0) <\infty$ by \cite[\S10.3,
     Cor.~2]{B}.
Since $\varphi_{2,h}$
is a $v_{F_2,h}$-norm, the finite rank case shows that $\varphi_2$ is a
$v_{F_2}$-norm on $A_2$; then,  $\varphi_{2,h} = \varphi_2\otimes v_{F_2,h}$
by Cor.~\ref{immediate.lem}.
Hence,
\[
\varphi_2 \otimes v_h  \ = \  (\varphi_2 \otimes v_{F_2,h}) \otimes v_h \ = \
\varphi_{2,h} \otimes v_h \ = \ \varphi_h .
\]
Therefore, as $\varphi_2$ is a norm, $\varphi_h\rest A =
(\varphi_2 \otimes v_h)\rest{A_2 \otimes_{F_2}F} = \varphi_2 \otimes v$,
which is a norm since it is a scalar extension of the norm
$\varphi_2$.
\end{proof}

\begin{corollary}\label{mainthcor} 
With the hypotheses on $A$, $\sigma$, and $v$ as in Th.~\ref 
{mainnonHensel.thm}, let $\varphi$ be a $v$-gauge on $A$ which
is invariant under $\sigma$.  Then,
\begin{enumerate}
    \item[(a)]
If the residue involution $\sigma_0$ is anisotropic, then
$\varphi$ is the unique $\sigma$-special $v$-gauge on $A$.
   \item[(b)]
If $\sigma_0$ is isotropic, then there is no $\sigma$-special
$v$-gauge  on $A$.
\end{enumerate}
\end{corollary}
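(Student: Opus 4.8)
The plan is to derive both parts from Theorem~\ref{mainnonHensel.thm}, Proposition~\ref{eqcond.prop}, and the unconditional implications of Corollary~\ref{anisot.cor} recorded in Remark~\ref{anisot.rem}(1). The organizing observation is that, by Proposition~\ref{eqcond.prop}, a $\sigma$-special $v$-gauge on $A$ is precisely a $v$-gauge that is invariant under $\sigma$ and whose residue involution $\tilde\sigma$ on $\gr_\varphi(A)$ is anisotropic, and that anisotropy of $\tilde\sigma$ is equivalent to anisotropy of $\sigma_0$ (Remark~\ref{anisot.rem}(1), i.e.\ the equivalence (b)$\iff$(c) of Corollary~\ref{anisot.cor}, which needs neither the Henselian nor the tameness hypothesis). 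Thus Corollary~\ref{mainthcor} amounts to combining this dictionary with the uniqueness assertion of Theorem~\ref{mainnonHensel.thm}.

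For part~(a), I would assume $\sigma_0$ is anisotropic. The implication (c)$\Rightarrow$(b) of Corollary~\ref{anisot.cor}, valid without the Henselian/tameness hypotheses by Remark~\ref{anisot.rem}(1), then gives that $\tilde\sigma$ is anisotropic on $\gr_\varphi(A)$; since $\varphi$ is invariant under $\sigma$, Proposition~\ref{eqcond.prop} shows $\varphi$ is $\sigma$-special. In particular a $\sigma$-special $v$-gauge on $A$ exists, so the equivalent conditions of Theorem~\ref{mainnonHensel.thm} hold, and that theorem yields that $\varphi$ is the \emph{unique} $v$-gauge on $A$ invariant under $\sigma$. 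As any $\sigma$-special $v$-gauge is in particular a $v$-gauge invariant under $\sigma$ (Proposition~\ref{eqcond.prop} again), it must coincide with $\varphi$, which proves the uniqueness claim.

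For part~(b), I would argue by contradiction: assume $\sigma_0$ is isotropic and that some $v$-gauge $\psi$ on $A$ is $\sigma$-special. Then $\psi$ is invariant under $\sigma$ (Proposition~\ref{eqcond.prop}), so Theorem~\ref{mainnonHensel.thm} applies and forces $\psi$ to be the unique $v$-gauge on $A$ invariant under $\sigma$; since the given $\varphi$ is also such a gauge, $\varphi=\psi$, hence $\varphi$ is $\sigma$-special. By Proposition~\ref{eqcond.prop}, $\tilde\sigma$ is then anisotropic on $\gr_\varphi(A)$, and restricting to the degree-$0$ component (where $\sigma_0$ is the restriction of $\tilde\sigma$ and $A_0\subseteq\gr_\varphi(A)$) shows $\sigma_0$ is anisotropic, contradicting the hypothesis. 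Therefore no $\sigma$-special $v$-gauge on $A$ exists. I do not expect a real obstacle here, since all the substance has already been carried out in Theorem~\ref{mainnonHensel.thm}; the only point needing care is to invoke exactly the unconditional direction of Corollary~\ref{anisot.cor} (namely (b)$\iff$(c), never (a)$\Rightarrow$(b)), so that no further hypotheses on $A$, $\sigma$, or $v$ beyond those already assumed are secretly required.
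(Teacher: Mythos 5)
Your proof is correct, and it reaches the conclusion by a slightly different (and somewhat leaner) route than the paper. The paper's own argument passes explicitly to the Henselization: it forms $\varphi_h=\varphi\otimes v_h$, checks via Cor.~\ref{cor:compatext} and the graded isomorphism $\gr_{\varphi_h}(A_h)\cong\gr_\varphi(A)$ that $\varphi_h$ is a $\sigma_h$-invariant gauge with $(\sigma_h)_0\cong\sigma_0$, and then transfers anisotropy between $\sigma_0$ and $\sigma_h$ by applying Cor.~\ref{anisot.cor} over $F_h$ — in particular, for part (b) it uses the Henselian-dependent direction (a)$\Rightarrow$(c) of that corollary, applied to $\sigma_h$ on $A_h$. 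You instead stay over $F$ throughout: you translate ``$\sigma$-special'' into ``invariant with $\tilde\sigma$ anisotropic'' via Prop.~\ref{eqcond.prop}, move between $\tilde\sigma$ and $\sigma_0$ using only the unconditional equivalence (b)$\iff$(c) recorded in Remark~\ref{anisot.rem}(1) (for (a), the direction (c)$\Rightarrow$(b); for (b), only the trivial (b)$\Rightarrow$(c)), and let Th.~\ref{mainnonHensel.thm} carry all the Henselization content through its uniqueness assertion. This buys you a shorter argument with no base-change bookkeeping and no appeal to the Springer-type direction of Cor.~\ref{anisot.cor}; the paper's version, at the cost of that bookkeeping, makes the identification $(\sigma_h)_0\cong\sigma_0$ explicit. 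Your one point needing care — that you must invoke only the unconditional directions of Cor.~\ref{anisot.cor} over the possibly non-Henselian $(F,v)$ — is exactly what Remark~\ref{anisot.rem}(1) licenses, so there is no gap.
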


\begin{proof}
Let $(F_h,v_h)$ be a Henselization of $(F,v)$, and let
$A_h = A\otimes _F F_h$ and $\sigma_h = \sigma \otimes \id_{F_h}$. 
Let $\varphi_h = \varphi \otimes v_h$,
a surmultiplicative value function  on $A_h$ which is invariant under 
the involution~$\sigma_h$, by Cor.~\ref{cor:compatext}. The graded
isomorphisms
$\gr_{\varphi_h}
(A_h)\cong \gr_\varphi(A)\otimes _{\gr_v(F)} \gr_{v_h}(F_h)
\cong \gr_\varphi(A)$ show that $\varphi_h$~is a gauge on
$A_h$, and $\tilde\sigma_h\cong \tilde\sigma$ and
$(\sigma_h)_0\cong \sigma_0$. (a) If $\sigma_0$ is anisotropic,
then so is $(\sigma_h)_0$, and so also is $\sigma_h$
by Cor.~\ref{anisot.cor}.  Th.~\ref{mainnonHensel.thm} then shows 
that $\varphi$ is a $\sigma$-special $v$-gauge
and is the unique such $v$-gauge on $A$, proving (a).
For (b), we prove the contrapositive:  If there were a
$\sigma$-special $v$-gauge $\psi$ for $A$ then the uniqueness
in Th.~\ref{mainnonHensel.thm} shows that $\varphi = \psi$.
Hence, $\sigma_h$ is anisotropic by Th.~\ref{mainnonHensel.thm},  so 
$(\sigma_h)_0$ is anisotropic
by Cor.~\ref{anisot.cor}, which implies $\sigma_0$~
is anisotropic as well.
\end{proof}

\begin{example} Even when there is no $\sigma$-special
$v$ gauge on $A$, there
may still be tame $v$-gauges on $A$ invariant under~$\sigma$, but they
need not be unique. For example, let $A$ the quaternion
division algebra $(-1,-1)_{\mathbb{Q}}$ over the field of rational
numbers, and let $v$  be the $3$-adic valuation on $\mathbb{Q}$.
Let
$(1,i,j,k)$ be the quaternion base of $A$ with $i^2=j^2=-1$ and
$k=ij=-ji$. As shown in \cite[Ex.~1.16]{TWgr}, a $v$-gauge $\varphi$
can be defined on $A$ by
\[
\varphi(a_0+a_1i+a_2j+a_3k) \ = \ \min\bigl(v(a_0),v(a_1),v(a_2),v(a_3)\bigr).
\]
Clearly, the residue algebra of $A$ for $\varphi$ is
$A_0 = (-1,-1)_{\mathbb{F}_3} \cong M_2(\mathbb{F}_3)$.
The $v$-gauge $\varphi$ is obviously invariant under the conjugation 
involution $\sigma$ on $A$. (This is the involution
with $\sigma(i) = -i$ and ${\sigma(j) = -j}$, which is the unique
symplectic involution on $A$.)  Since $A$ is a division algebra, 
$\sigma$ must be anisotropic.  The residue
involution $\sigma_0$ is the conjugation involution on
$A_0$, which is  isotropic, since $\sigma_0(t)t
=\Nrd_{A_0}(t)$ for any $t$ in the split quaternion algebra $A_0$. 
So, by Cor.~\ref{mainthcor}(b) there is no
$\sigma$-special $v$-gauge on $A$.
For
any unit $u\in A^\times$ the map $\varphi_u$ defined by
\[
\varphi_u(x) \,= \,\varphi(uxu^{-1})\qquad\text{for $x\in A$}
\]
is a $v$-gauge on $A$, and
Prop.~1.17 of \cite{TWgr} shows that $\varphi_u=\varphi$ if and only
if $\widetilde u$ is invertible in $\gr_\varphi(A)$, which is not a 
graded division ring.  But, for {\it every}
$u\in A^\times$, since $\varphi$ is invariant under $\sigma$ and 
$\sigma(u)u$ is central,
$$
\varphi_u(\sigma(x)) \ = \ \varphi(\sigma[(\sigma(u^{-1})x\sigma(u)]) 
\ = \ \varphi(\sigma(u^{-1})x\sigma(u)) \ = \
\varphi_u([\sigma(u)u]^{-1}x [\sigma(u)u]) \ = \
\varphi_u(x),
$$
showing that $\varphi_u$ is invariant under $\sigma$.
\end{example}
\bigskip

\end{document}